\newtheorem{theorem}{\noindent{\bf Theorem}}[section]
\newtheorem{definition}[]{\noindent{\bf Definition}}[section]
\newtheorem{proposition}[]{\noindent{\bf Proposition}}[section]
\newtheorem{lemma}[]{\noindent{\bf Lemma}}[section]
\newtheorem{corollary}[]{\noindent{\bf Corollary}}[section]
\newtheorem{example}[]{\noindent{\bf Example}}[section]
\newtheorem{remark}[]{\noindent{\bf Remark}}[section]
\begin{document}

\title[Geometric structure and existence]{Geometric structure and existence of reducible spherical conical metrics}
\author { Zhiqiang Wei,~~Yingyi Wu, ~~Bin Xu}


\begin{abstract}
 A conformal metric ${\rm d}s^{2}$ with finitely many conical singularities of constant Gaussian curvature $K=1$ on a compact Riemann surface is referred to as a spherical conical metric.  When the associated monodromy group of ${\rm d}s^{2}$ is diagonalizable, we refer to ${\rm d}s^{2}$ as a reducible  spherical conical metric. The simplest case of a reducible spherical conical metric is a `football', which denotes a 2-sphere with a spherical conical metric that has precisely two singularities separated by a distance of $\pi$. This study delves into the intrinsic geometric structure and existence of reducible spherical conical metrics on compact Riemann surfaces. We demonstrate that any such spherical surface can be divided into a finite number of pieces by cutting along a set of suitable geodesics, which connect the conical singularities and some smooth points of the metric. Especially, each piece is isometric to a portion obtained by cutting a football along a geodesic that joins the two conical singularities. As an application, an angle condition for the existence of such a metric is presented. Most notably, our study demonstrates the existence of a reducible spherical conical metric where all the saddle points of a Morse function are located on the same geodesic.
\vspace*{2mm}

\noindent{\bf Key words}\hskip3mm  Spherical conical metric, Conical singularity, Killing vector field, Abelian differential.
\vspace*{2mm}\\
\noindent{\bf 2020 MR Subject Classification:}\hskip3mm 30F45, 53C23.
\thispagestyle{empty}

\end{abstract}
\maketitle

\section{Introduction}
\setcounter{equation}{0}
We define a metric ${\rm d}s^{2}$ as a conical metric on a compact Riemann surface $\mathcal{M}$ if ${\rm d}s^{2}$ is a conformal Riemannian metric on $\mathcal{M}$, except at a finite number of points labeled $p_{1},\ldots,p_{N}$.  For each point $p_{l}$, there exists a complex coordinate $z_{l}$ with $z_{l}(p_{l})=0$ and a real number $0< \alpha_{l}\neq 1$ such that ${\rm d}s^{2}=e^{2\varphi_{l}}|{\rm d}z_{l}|^{2}$, where
$$\varphi_{l}-(\alpha_{l}-1)\ln|z_{l}|$$
can be continuously extended to $0$. The numbers $\alpha_{l}$ and $2\pi\alpha_{l}$ are referred to as the order and the conical angle of ${\rm d}s^{2}$ at $p_{l}$, respectively. These points $p_{1},\ldots,p_{N}$ are known as the conical singularities of ${\rm d}s^{2}$. For brevity, we say that ${\rm d}s^{2}$ represents the real divisor $D:=\sum\limits_{l=1}^{N}(\alpha_{l}-1)P_{l}$ and refer to the K-surface as $\mathcal{M}_{\{\alpha_{1},\ldots,\alpha_{N}\}}$. This implies that there is a conical metric on $\mathcal{M}$ which represents some real divisor $D=\sum\limits_{l=1}^{N}(\alpha_{l}-1)P_{l}$.\par

A conical metric with constant Gaussian curvature $K=-1,0$ or $1$ is referred to as a hyperbolic, flat or spherical conical metric, respectively. The problem of investigating the existence and uniqueness of such a metric for a given real divisor $D=\sum\limits_{l=1}^{N}(\alpha_{l}-1)P_{l}$, where $1\neq \alpha_{l}>0$ for all $l$, on a compact Riemann surface $\mathcal{M}$, is a fundamental problem in complex analysis and differential geometry. This problem is an extension of the classical uniformization theorem, which is applied to surfaces with boundaries and remains an open question. The history of studying hyperbolic conical metrics  dates back
 at least to Poincar\'{e} \cite {HP98} and Picard \cite{EP05}. This paper focuses on the study of spherical conical metrics, as the existence of hyperbolic and flat conical metrics has already been established \cite{Hei62,McO88,Tr91}. For other issues related to conical metrics, readers are referred to \cite{BD13,Wu11,Ch99,Ch00,JLP13,LZ02,MYN15,MZ20,MZ22,WZ00} and references cited therein.\par

Troyanov \cite{Tr89} demonstrated that a spherical conical metric with two singularities of orders $\alpha$ and $\beta$ exists on the Riemann sphere $S^{2}$  if and only if $\alpha=\beta$. In a subsequent work \cite{Tr91}, he provided a sufficient condition for the existence of a spherical conical metric on a compact Riemann surface $\mathcal{M}$. Luo and Tian \cite{LT92}, when $\mathcal{M}=S^{2}$ and all conical angles are within $(0,2\pi)$, proved that Troyanov's condition is both sufficient and necessary for the existence of a unique metric. Chen and Li \cite{CL91} identified necessary conditions for the existence of spherical conical metrics on a compact Riemann surface under certain constraints. Umehara and Yamada \cite{UY00} in their study of constant mean curvature surfaces in hyperbolic 3-space,  not only provided a necessary and sufficient condition for the existence of spherical conical metrics on $S^{2}$ with 3 conical singularities, but also introduced the concepts of reducible and irreducible spherical conical metrics.  Eremenko \cite{Er04} also studied the existence of spherical conical metrics on $S^{2}$ with 3 conical singularities. Chen et al. \cite{Xu15} through their investigation of the developing map of a spherical conical metric, established a link between the existence of reducible metrics and the presence of a specific type of meromorphic 1-forms on a compact Riemann surface $\mathcal{M}$. More recently, Mondello and Panov \cite{MD16} provided angle constraints for the existence of irreducible spherical conical metrics on $S^{2}$, while Eremenko \cite{Er20} outlined angle constraints for reducible spherical conical metrics. We refer the reader to a comprehensive survey by Eremenko \cite{Er21} for further details. For more information on spherical conical metrics, readers are referred to \cite{DFA11,Br88,Er20b,GT23,Er23,LSX21,MD19,Tah22} and references cited therein.\par

Let's explore the concept of reducible spherical conical metrics as discussed in \cite{Xu15,UY00}. Suppose ${\rm d}s^{2}$ is a spherical conical metric on a compact Riemann surface $\mathcal{M}$. For each smooth point of ${\rm d}s^{2}$, there exists a neighborhood that is isometric to an open subset of the unit sphere $S^{2}$ in $\mathbb{E}^{3}$. Through analytic continuation, we derive an orientation-preserving locally isometric developing map  $f: \mathcal{M}\setminus\{\text{singularities of}~ {\rm d}s^{2}\}\rightarrow S^{2}$, which is a multivalued meromorphic function. This developing map induces a representation of the fundamental group of $\mathcal{M}\setminus\{\text{singularities of}~{\rm d}s^{2}\}$ to the group $PSU(2)$. The image of this representation is referred to as the monodromy group. If the monodromy group is contained within $U(1)$, or if it can be diagonalized, then ${\rm d}s^{2}$ is classified as reducible. \par

The central objective of this paper is to delve into the intricate geometric structure and the existence of reducible spherical conical metrics on compact Riemann surfaces. Our primary result, referred to the \textbf{fundamental theorem of reducible spherical conical metrics}, is outlined below.

\begin{theorem}[\textbf{Fundamental theorem of reducible spherical conical metrics}]\label{Thm-1}
Suppose ${\rm d}s^{2}$ is a reducible spherical conical metric on a compact Riemann surface $\mathcal{M}$, then, there exists an Abelian differential $\omega$ of the third kind on $\mathcal{M}$ with the following properties:
\begin{enumerate}
\item All residues of $\omega$ are nonzero and real.
\item $\omega$ has an exact real part outside its poles.
\end{enumerate}
The conical angles of ${\rm d}s^{2}$ at the zeros of $\omega$ are $2\pi\cdot(ord_{p}(\omega)+1)$, and at the poles, they are either $2\pi \cdot {\rm Res}_{p}(\omega)$ or $-2\pi\cdot {\rm Res}_{p}(\omega)$ depending on the sign of ${\rm Res}_{p}(\omega)$. If ${\rm Res}_{p}(\omega)=\pm1$, then $p$ is a smooth point of ${\rm d} s^{2}$. Furthermore, there exists a smooth surjective function $\Phi:\mathcal{M}\setminus\{\text{poles of} ~\omega\}\rightarrow (0,4)$ that can be continuously extended to $\mathcal{M}$ and satisfies the differential equation $\mathcal{M}\setminus\{\text{poles of} ~\omega\}$
\begin{equation}\label{E-1}
\frac{4{\rm d}\Phi}{\Phi(4-\Phi)}=\omega+\overline{\omega},
\end{equation}
with the initial condition $\Phi(p_{0})\in (0,4)$ for some $p_{0}\in \mathcal{M}\setminus\{\text{poles of} ~\omega \}$. The metric ${\rm d}s^{2}$  is then given by
 $${\rm d}s^{2}=\frac{\Phi(4-\Phi)}{4}\omega\overline{\omega}.$$

Conversely, given an Abelian differential $\omega$ of the third kind on a compact Riemann surface $\mathcal{M}$ with the aforementioned properties, there exists a smooth surjective function $\Phi:\mathcal{M}\setminus\{\text{poles of} ~\omega\}\rightarrow (0,4)$ satisfying equation (\ref{E-1}) and
$${\rm d}s^{2}=\frac{\Phi(4-\Phi)}{4}\omega\overline{\omega}$$
is a reducible spherical conical metric on $\mathcal{M}$. This metric represents the divisor
 $$D=\sum_{i=1}^{I}(\alpha_{i}-1)Q_{i}+\sum_{n=1}^{N} (|{\rm Res}_{p_{n}}(\omega)|-1)P_{n},$$
 where $p_{1},\ldots,p_{N}$ are poles of $\omega$ and $q_{1},\ldots,q_{I}$ are zeros of $\omega$ with orders $\alpha_{1}-1,\ldots,\alpha_{I}-1$ respectively.
\end{theorem}

\begin{remark}
On a compact Riemann surface, an Abelian differential is referred to a meromorphic 1-form (see \cite{G57}).
\end{remark}

\begin{remark}
The equation (\ref{E-1}) exhibits a notable symmetry. By substituting $-\omega$ for $\omega$,  we observe that the metric ${\rm d}s^{2}$ remains invariant. This is due to the transformation of the function $\Phi$ to its complementary value $4-\Phi$, which maintains the integrity of the metric expression.
\end{remark}

\begin{remark}
The first part  of \textbf{Theorem \ref{Thm-1}} is derived from the work of \cite{Xu15} with the exception of equation  (\ref{E-1}) and  specific expression for the metric ${\rm d}s^{2}$.  In \cite{Xu15}, Chen  et al. introduced a Morse function denoted as $\Phi$, which is defined on the compact Riemann surface $\mathcal{M}$ by the formula
$$\Phi(z)=\frac{4|f(z)|^{2}}{1+|f(z)|^{2}},$$
where $f$  is a multivalued meromorphic function defined by equation ${\rm d}\ln f=\omega$. It can be readily verified that $\Phi$ satisfies equation (\ref{E-1}). The authors of \cite{Xu15} referred to $\omega$ as a character 1-form of the metric ${\rm d}s^{2}$.
\end{remark}

Our second result in this paper is the \textbf{structure theorem of reducible spherical conical metrics}. To clarify our theorem, we introduce the concept of a football metric, or simply a `football', which is the elements and the simplest type of resducible spherical conical metrics.\par

\begin{definition}
Let ${\rm d}s^{2}$ be a spherical conical metric with two conical singularities on a 2-sphere. If the distance between these two singularities is $\pi$, we define ${\rm d}s^{2}$ as a football metric. The standard metric on $S^{2}$  also classified as a football metric in this paper. Furthermore, a 2-sphere equipped with a football metric is referred to as a football. We will also denote the standard football metric by $S^{2}_{\{1,1\}}$. In fact, all footballs are reducible spherical conical metrics.
\end{definition}

\begin{figure}[htbp]
\centering
\includegraphics[width=8cm]{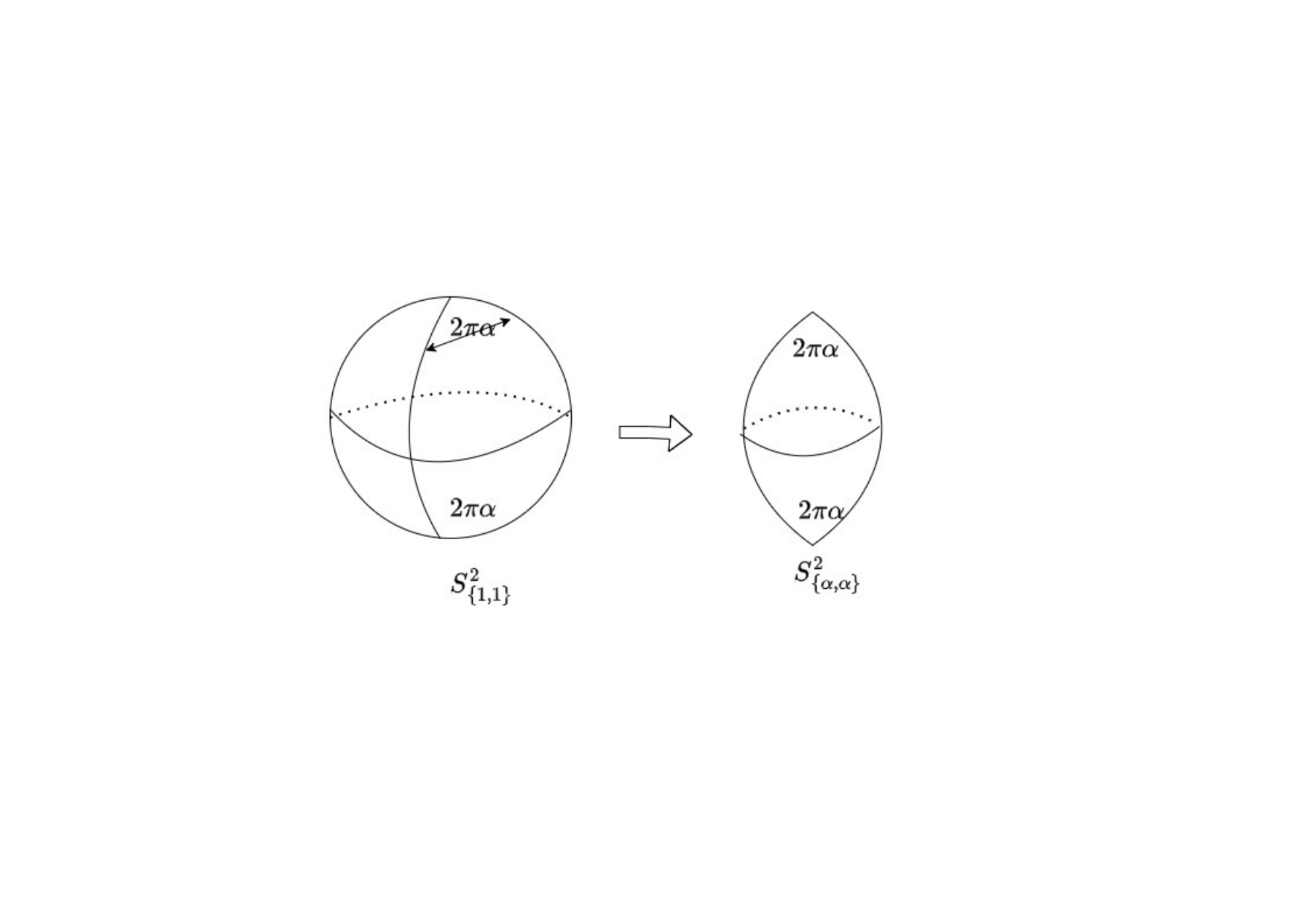}
\caption{Football.}
\end{figure}

For example, on the standard football $S^{2}=S^{2}_{\{1,1\}}$, by taking a bigon with an angle $2\pi\alpha(0<\alpha<1)$ (as shown in Figure 1), we can construct an American football $S^{2}_{\{\alpha,\alpha\}}$.

\begin{theorem}[\textbf{Structure theorem of reducible spherical conical metrics}]\label{Main-th-1}
 Let ${\rm d}s^{2}$ be a reducible spherical conical metric on a compact Riemann surface, denoted by $\mathcal{M}$. Let $\omega$ be a character 1-form associated with ${\rm d}s^{2}$, and let $\Phi$ be a real function on $\mathcal{M}$ as defined by equation (\ref{E-1}). Then, by cutting along a finite number of suitable geodesics that connect the extremal points and saddle points of $\Phi$, $\mathcal{M}$ can be uniquely partitioned into a finite number of pieces. Each piece resulting from this partition is isometric to a piece that would be obtained by cutting a football along a geodesic connecting its two conical singularities.
\end{theorem}

\textbf{Theorem \ref{Main-th-1}}  implies that it is possible to construct such metrics on any compact Riemann surface. Our third result in this paper pertains to the existence of reducible spherical conical metrics on $S^{2}$. To express this result, we introduce the following notations.\par

Suppose $m$ is a nonnegative integer, and $a_{1},a_{2},\ldots,a_{m+2}$ are $m+2$ nonzero real numbers with $a_{1}+\ldots+a_{m+2}=0$. We defined $\overrightarrow{r}=\{a_{1},a_{2},\ldots,a_{m+2}\}$ as a real residue vector, with $a_{1},a_{2},\ldots,a_{m+2}$ being its components.  A real residue vector $\overrightarrow{r}$ is called rational if there exists a nonzero real constant $c$ such that all components of $c\overrightarrow{r}$ are integers. Otherwise, $\overrightarrow{r}$ is called irrational.
For a rational vector $\overrightarrow{r}$, there exists a unique nonzero real number $c$, up to sign, such that all the components of $c\overrightarrow{r}$ are integers and coprime. In this case, $c\overrightarrow{r}$ is referred to as primitive and we define the degree of $\overrightarrow{r}$ by ${\rm deg}(\overrightarrow{r})=\sum\limits_{ca_{i}>0}ca_{i}$. If $\overrightarrow{r}$ is irrational, we define the degree of $\overrightarrow{r}$ by ${\rm deg}(\overrightarrow{r})=+\infty$. Then our existence theorem can now be stated as follows.\par

\begin{theorem}\label{Main-th-4}
Suppose $\beta_{1},\ldots,\beta_{J}$ and $\gamma_{1},\ldots,\gamma_{L}$ are positive real numbers with $\beta_{j}\neq1$ and $\gamma_{l}\neq1$ for all $j$ and $l$, and  $2\leq\alpha_{1},\ldots,\alpha_{I}$ are integers satisfying $I\geq1,J\geq0,L\geq0$ and $I+J+L\geq 4$. There exists a reducible spherical conical metric ${\rm d}s^{2}$ on $S^{2}_{\{\alpha_{1},\ldots,\alpha_{I},\beta_{1},\ldots,\beta_{J},\gamma_{1},\ldots,\gamma_{L}\}}$ such that the function $\Phi$ defined by equation (\ref{E-1}) satisfies
\begin{enumerate}
\item The singularities of angles $2\pi\alpha_{1},\ldots,2\pi\alpha_{I}$ are saddle points of $\Phi$.
\item The singularities of angles $2\pi\beta_{1},\ldots,2\pi\beta_{J}$ are minimum points of $\Phi$, and the singularities of angles $2\pi\gamma_{1},\ldots$, $ 2\pi\gamma_{L}$ are maximum points of $\Phi$,
\end{enumerate}
 if and only if there exist nonnegative integers $p$ and $q$ satisfying the following system of equations
\begin{equation*}
\begin{cases}
p+q=\sum\limits_{i=1}^{I}(\alpha_{i}-1)-(J+L)+2,\\
q-p=\sum\limits_{j=1}^{J}\beta_{j}-\sum\limits_{l=1}^{L}\gamma_{l},
\end{cases}
\end{equation*}
and
$${\rm deg}(\overrightarrow{r})>\max\{\alpha_{i}-1\}_{1\leq i\leq I},$$
where
$$\overrightarrow{r}=\{\beta_{1},\ldots,\beta_{J},-\gamma_{1},\ldots,-\gamma_{L},\underbrace{1,\ldots,1}_{p ~times},\underbrace{-1,\ldots,-1}_{q ~times}\}.$$
\end{theorem}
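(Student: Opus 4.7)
The plan is to prove both directions using \textbf{Theorem \ref{Thm-1}} to translate between the metric ${\rm d}s^{2}$ and its character 1-form $\omega$, and \textbf{Theorem \ref{Main-th-1}} (the football decomposition) to handle the combinatorial side.

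\textbf{Necessity.} Suppose ${\rm d}s^{2}$ exists with the prescribed critical structure for $\Phi$. \textbf{Theorem \ref{Thm-1}} produces a character 1-form $\omega$ on $S^{2}$ whose zeros are exactly the saddles of $\Phi$ and whose poles are exactly the extrema. The hypothesis fixes the local type of $\omega$ at each prescribed singularity: a zero of order $\alpha_{i}-1$ at each $\alpha_{i}$-point, a simple pole with residue $+\beta_{j}$ at each $\beta_{j}$-point (minimum, where $\Phi\to 0$), and residue $-\gamma_{l}$ at each $\gamma_{l}$-point (maximum, where $\Phi\to 4$). Any remaining critical point of $\Phi$ is a smooth extremum and, by \textbf{Theorem \ref{Thm-1}}, corresponds to a smooth pole of $\omega$ with residue $+1$ (minimum) or $-1$ (maximum); let $p,q$ denote their respective counts. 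The identity $\deg(\mathrm{div}\,\omega)=2g-2=-2$ on $S^{2}$ gives the first equation $p+q=\sum(\alpha_{i}-1)-(J+L)+2$, and the residue theorem $\sum\mathrm{Res}(\omega)=0$ gives the second. For the strict degree inequality, I would invoke \textbf{Theorem \ref{Main-th-1}}: around each $\alpha_{i}$-saddle, the football decomposition exhibits $\alpha_{i}-1$ disjoint footballs on the low-$\Phi$ side of the saddle, each contributing a distinct positive residue, forcing the positive-residue total of $\overrightarrow{r}$ to strictly exceed $\alpha_{i}-1$ in the rational case (the irrational case is vacuous since $\deg=+\infty$).

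\textbf{Sufficiency.} Given the two arithmetic identities and the degree condition, I would realize ${\rm d}s^{2}$ by an explicit football gluing and then apply the converse of \textbf{Theorem \ref{Thm-1}}. The components of $\overrightarrow{r}$ are interpreted as the residues of $\omega$ at the $J+L+p+q$ poles, with positive components corresponding to local minima of $\Phi$ and negative components to local maxima. I would then construct a connected graph embedded in $S^{2}$ whose vertices are saddles of prescribed multiplicities $\alpha_{i}-1$, whose edges are prospective geodesic arcs, and whose faces are bigons, each carrying a matched pair of opposite-sign residues of equal modulus so that every face is isometric to a football $S^{2}_{\{|r|,|r|\}}$. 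The degree condition is the Hall-type inequality guaranteeing such a bipartite matching exists with the saddle structure intact: no single saddle can consume more than $\deg(\overrightarrow{r})$ worth of positive-residue mass on one side. I would carry out the construction by induction on $I+J+L+p+q$, modelling the low-complexity cases on \textbf{Theorems \ref{Main-th-3-0}--\ref{Main-th-3-3}} and at each step removing a single football whose poles form a matched $\pm r$ pair, reducing to a smaller instance.

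The hard part, as is typical for this circle of arguments, will be the sufficiency direction, specifically proving that the inductive removal of a football preserves both arithmetic identities and, most subtly, the strict inequality $\deg(\overrightarrow{r})>\max_{i}(\alpha_{i}-1)$. This is delicate when the detached football is incident to the saddle of maximal multiplicity and its removal drops the maximum to some $\alpha'<\alpha$: one must verify that the residual residue vector still has degree exceeding this new maximum, and that the geometric closing-up condition (real residues, exact real part) is inherited by the residual 1-form. Careful case analysis in the spirit of \textbf{Theorems \ref{Main-th-3-0}--\ref{Main-th-3-3}} should dispose of the base cases with $I+J+L$ small, after which the general induction completes the argument.
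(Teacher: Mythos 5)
Your route to the two arithmetic identities (residue theorem plus the degree of the divisor of $\omega$ on $S^{2}$) is the same as the paper's and is fine. The genuine gap is in the degree inequality. For necessity, your football count controls at best the unrescaled positive-residue total $\sum_{j}\beta_{j}+p$, but ${\rm deg}(\overrightarrow{r})$ is defined through the primitive rescaling $c\overrightarrow{r}$, and the two quantities are genuinely different: take $\overrightarrow{r}=\{2,2,-2,-2\}$, i.e. $J=L=2$, $\beta_{1}=\beta_{2}=\gamma_{1}=\gamma_{2}=2$, $p=q=0$, $I=1$, $\alpha_{1}=3$. Both arithmetic identities hold, the naive positive sum is $4>\alpha_{1}-1=2$, yet ${\rm deg}(\overrightarrow{r})=2$ and \textbf{Theorem \ref{Main-th-4}} asserts non-existence; so an argument that only bounds the naive sum cannot prove the stated necessity. (The count itself is also shaky: a saddle of angle $2\pi\alpha_{i}$ has $\alpha_{i}$ descending sectors, the incident footballs need not be pairwise distinct, and the entries of $\overrightarrow{r}$ are the total cone angles at the extrema, i.e. sums of the vertex angles of all footballs sharing that extremum, not individual football parameters.) What is actually needed is the paper's argument: when $\overrightarrow{r}$ is rational, rescale by the constant $c$ making $c\overrightarrow{r}$ primitive, so that $f=\exp(\int^{z}c\,\omega)$ is a single-valued rational function with ${\rm deg}(f)={\rm deg}(\overrightarrow{r})$, while $f$ has local degree $\alpha_{i}$ at a zero of $\omega$ of order $\alpha_{i}-1$; hence ${\rm deg}(\overrightarrow{r})\geq\alpha_{i}>\alpha_{i}-1$.

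For sufficiency, the same example shows that your organizing principle --- a bipartite matching of opposite-sign residues of equal modulus, with the degree hypothesis read as a Hall-type inequality --- is not the right criterion: $\{2,2,-2,-2\}$ admits a perfect such matching (two pairs $\pm2$), yet no metric with a single saddle of angle $6\pi$ exists. In the paper's constructions the football vertex angles come from splitting residues (several footballs may share one minimum, their angles summing to $\beta_{j}$), and the decisive step, which you flag as "the hard part" but do not carry out, is precisely the inductive choice of which positive entry to decrease by which negative entry (which football to split off) so that the reduced residue vector still satisfies ${\rm deg}>$ the new maximum of the $\alpha_{i}-1$ and the parity/counting identities persist. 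That is the content of \textbf{Lemmas \ref{Lemma-A-1}} and \textbf{\ref{Lemma-A-2}}, whose proofs hinge on the primitive form of $\overrightarrow{r}$ (e.g. forcing $b_{1}=b_{2}=1$ when ${\rm deg}(\overrightarrow{r})\leq m$, and treating separately the case where $\alpha_{1}$ is not the unique maximum), followed by the four-case induction (both signs present, only minima, only maxima, no non-integer angles). Without these lemmas your induction is a plan rather than a proof; the base cases from \textbf{Theorems \ref{Main-th-3-0}--\ref{Main-th-3-3}} do not by themselves supply the missing reduction step.
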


A spherical conical metric on $S^{2}$ with at most two non-integer orders must be reducible \cite{Xu15}. Hence, from \textbf{Theorem \ref{Main-th-4}}, we obtain the following corollary.\par

\begin{corollary}\label{Cor-N-1}
Suppose $2\leq\alpha_{1},\ldots,\alpha_{I}$ are $I\geq 1$ integers and $\beta,\gamma\neq1$ are positive real numbers. There exists a spherical conical metric on $S^{2}_{\{\alpha_{1},\ldots,\alpha_{I},\beta,\gamma\}}$ such that the singularities of angles $2\pi\alpha_{1},\ldots,2\pi\alpha_{I}$ are all saddle points of $\Phi$ defined by equation (\ref{E-1}) if and only if one of the following conditions is satisfied:
\begin{enumerate}
 \item $\sum\limits_{i=1}^{I}(\alpha_{i}-1)-\beta-\gamma$ and $ \sum\limits_{i=1}^{I}(\alpha_{i}-1)+\beta+\gamma$ are both nonnegative even integers.
 \item $\sum\limits_{i=1}^{I}(\alpha_{i}-1)+\beta-\gamma$ and $ \sum\limits_{i=1}^{I}(\alpha_{i}-1)-\beta+\gamma$ are both nonnegative even integers.
\end{enumerate}
\end{corollary}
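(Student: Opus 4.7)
The plan is to reduce the corollary to previously stated results. The case $I=1$ is just \textbf{Theorem \ref{Main-th-3-0}} after writing $\alpha = \alpha_1$: indeed any CSC-1 metric on $S^{2}_{\{\alpha,\beta,\gamma\}}$ must place the integer-angle point $\alpha$ at a saddle, because the non-integer singularities $\beta,\gamma$ are forced to be poles of $\omega$ (and hence extrema of $\Phi$) while Morse theory on $S^{2}$ rules out three extrema and no saddle. So it remains to treat $I\geq 2$, in which case $I+J+L\geq 4$ and \textbf{Theorem \ref{Main-th-4}} applies. By \textbf{Theorem \ref{Thm-1}} the angle of ${\rm d}s^{2}$ at a zero of $\omega$ is $2\pi$ times a positive integer, so the two non-integer singularities $\beta,\gamma$ must be extremal points of $\Phi$. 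Hence $J+L=2$, and three subcases occur: both $\beta,\gamma$ are minima ($J=2,L=0$), both are maxima ($J=0,L=2$), or one of each ($J=L=1$).

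Writing $S:=\sum_{i=1}^{I}(\alpha_i-1)$, the linear system $p+q=S-(J+L)+2=S$ and $q-p=\sum_j\beta_j-\sum_l\gamma_l$ from \textbf{Theorem \ref{Main-th-4}} reduces, in the first two subcases, to the unordered pair $\{p,q\}=\{\tfrac{1}{2}(S-\beta-\gamma),\,\tfrac{1}{2}(S+\beta+\gamma)\}$, so the requirement that $p,q$ be nonnegative integers is precisely condition (1) of the corollary. In the third subcase one gets $\{p,q\}=\{\tfrac{1}{2}(S-\beta+\gamma),\,\tfrac{1}{2}(S+\beta-\gamma)\}$, which is condition (2). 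Thus the arithmetic part of the hypothesis of \textbf{Theorem \ref{Main-th-4}} matches the corollary exactly.

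The remaining point is to show that the side hypothesis ${\rm deg}(\overrightarrow{r})>\max_i\{\alpha_i-1\}$ is automatic, so that it may be dropped from the corollary statement. If $\overrightarrow{r}$ is irrational then ${\rm deg}(\overrightarrow{r})=+\infty$ and there is nothing to check. Otherwise the primitive multiplier $c$ must be a positive integer (since $\pm 1$ appear among the components), and $c\beta,c\gamma\in\mathbb{Z}$ together with the non-integrality of $\beta,\gamma$ forces $c\geq 2$. A direct count in each of the three subcases yields ${\rm deg}(\overrightarrow{r})=\tfrac{c}{2}(S+\beta+\gamma)\geq S+\beta+\gamma$, and since $S\geq\max_i(\alpha_i-1)$ and $\beta+\gamma>0$ this strictly exceeds $\max_i(\alpha_i-1)$. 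I expect this uniform verification of the degree condition to be the main bookkeeping obstacle; once it is in hand, the forward and reverse implications follow immediately from \textbf{Theorem \ref{Main-th-4}} (and from \textbf{Theorem \ref{Main-th-3-0}} in the case $I=1$).
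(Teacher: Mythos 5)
Your proposal is correct and takes essentially the same route the paper intends: the paper states the corollary as an immediate consequence of Theorems \ref{Main-th-3-0} and \ref{Main-th-4}, and your case split ($I=1$ via Theorem \ref{Main-th-3-0}, where the saddle condition is indeed automatic because otherwise $\omega$ would be zero-free and so could have only two poles; $I\geq 2$ via Theorem \ref{Main-th-4} with $J+L=2$ and the three min/max assignments) is precisely that deduction. Your verification that the degree hypothesis is automatic, namely ${\rm deg}(\overrightarrow{r})=\frac{c}{2}\bigl(\sum_{i=1}^{I}(\alpha_{i}-1)+\beta+\gamma\bigr)$ with $c\geq 2$ since $\beta,\gamma\notin\mathbb{Z}$, hence ${\rm deg}(\overrightarrow{r})>\max_{i}(\alpha_{i}-1)$, correctly supplies the one piece of bookkeeping the paper leaves implicit.
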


Additionally, from the proof of \textbf{Theorem \ref{Main-th-4}} in section 9, we obtain the following theorem.

\begin{theorem}
If there exists a reducible spherical conical metric on K-surface $S^{2}_{\{\alpha_{1},\ldots,\alpha_{N}\}}$, then there is a reducible spherical conical metric on $S^{2}_{\{\alpha_{1},\ldots,\alpha_{N}\}}$ such that all saddle points of $\Phi$ defined by equation (\ref{E-1}) lie on the same geodesic connecting a minimum point and a maximum point of $\Phi$. Furthermore, if $\alpha_{1},\ldots,\alpha_{N}$ are integers, using some standard footballs $S^{2}_{\{1,1\}}$, we can construct a reducible spherical cone metric on $S^{2}_{\{\alpha_{1},\ldots,\alpha_{I}\}}$ such that all singularities  lie on the same geodesic connecting a minimum point and a maximum point of $\Phi$ defined by equation (\ref{E-1}).
\end{theorem}

\begin{remark}
The theorem above does not hold on a spherical surface with genus $g\geq1$. For example, the is a reducible spherical conical metric on a toric $T_{\{2,2\}}$, but $T_{\{2,2\}}=S^{2}_{\{\frac{1}{2},\frac{1}{2}\}}+S^{2}_{\{\frac{1}{2},\frac{1}{2}\}}$ (see the notations in section 9).
\end{remark}

Our last result in this paper is about the existence of reducible spherical conical metrics on a compact orientable surface $\mathcal{M}^{g}$ of genus $g\geq1$.

\begin{theorem}\label{Main-thm-5}
Let $g\geq1$ be an integer. Suppose $\beta_{1},\ldots,\beta_{J}$ and $\gamma_{1},\ldots,\gamma_{L}$ are positive real numbers with $\beta_{j}\neq1$ and $\gamma_{l}\neq1$ for all $j$ and $l$, where $J\geq0$ and $L\geq0$. Suppose $2\leq\alpha_{1},\ldots,\alpha_{I}$ are $I\geq1$ integers.
There exist a compact Riemann surface $\mathcal{M}^{g}$ of genus $g$ and a reducible spherical conical metric ${\rm d}s^{2}$ on $$\mathcal{M}^{g}_{\{\alpha_{1},\ldots,\alpha_{I},\beta_{1},\ldots,\beta_{J},\gamma_{1},\ldots,\gamma_{L}\}}$$
if and only if there exist nonnegative integers $p$ and $q$ such that $p+J\geq1,~q+L\geq1$, and
\begin{equation*}
\begin{cases}
(p+J)+(q+L)=\sum\limits_{i=1}^{I}(\alpha_{i}-1)+2-2g,\\
q-p=\sum\limits_{j=1}^{J}\beta_{j}-\sum\limits_{l=1}^{L}\gamma_{l}.
\end{cases}
\end{equation*}

Furthermore, the function $\Phi$ defined by equation (\ref{E-1}) satisfies the following two conditions.
\begin{enumerate}
 \item  The singularities of angles $2\pi\alpha_{1},\ldots,2\pi\alpha_{I}$ are saddle points of $\Phi$.
\item The singularities of angles $2\pi\beta_{1},\ldots,2\pi\beta_{J}$ are  minimum points of $\Phi$, and the singularities of angles $2\pi\gamma_{1},\ldots,$ $2\pi\gamma_{L}$ are maximum points of $\Phi$.
\end{enumerate}
\end{theorem}

\begin{remark}
Recently, Gendron and Tahar \cite{GT23} proposed an angle condition for the existence of reducible spherical metrics. However, our method differs from theirs.
\end{remark}

 The paper is structured as follows: \textbf{Section 2} presents the proof of the fundamental theorem of reducible spherical conical metrics. \textbf{Section 3} applies this theorem to offer a novel proof for the classification \cite{Tr89} of spherical conical metrics on $S^{2}_{\{\alpha,\alpha\}}$. \textbf{Section 4} begins with an examination of the properties of $\Phi$. We then introduce a complex vector field $Y$ defined by $\omega(Y)=\sqrt{-1}$ and a real vector field $\overrightarrow{V}=\frac{1}{2}(Y+\overline{Y})=Re(Y)$. These vector fields are crucial for the proof of the structure theorem of reducible conical spherical metrics.  \textbf{Section 5} details the proof of the structure theorem of reducible spherical conical metrics. \textbf{Section 6} illustrates the geometric construction of spherical conical metrics on $S^{2}_{\{\alpha,\alpha\}}$, serving as a concrete example of the structure theorem. \textbf{Section 7} presents additional explicit examples of spherical conical metrics, demonstrating the wide applicability of the structure theorem. \textbf{Section 8} delves into the existence of reducible spherical conical metrics on $S^{2}$ with 3 singularities, a subject previously investigated by Umehara-Yamada \cite{UY00} and Eremenko \cite{Er04}. Finally, \textbf{Sections 9 and 10} contain the proofs for the existence of reducible spherical conical metrics on  $S^{2}$ and a compact Riemann surface $\mathcal{M}^{g}$ of genus $g$, respectively.

\section{Proof of \textbf{Theorem \ref{Thm-1}}}

In \cite{Xu15}, Chen et al. established the first part of \textbf{Theorem \ref{Thm-1}}, excluding equation  (\ref{E-1}) and the expression for ${\rm d}s^{2}$.  These components can be easily derived from the existing work, so the emphasis now is on proving the second part of \textbf{Theorem \ref{Thm-1}}.\par

Let the poles of $\omega$ be denoted as $p_{1},\ldots,p_{N}$, and the zeros as $q_{1},\ldots,q_{I}$. Since $\omega+\overline{\omega}$ is exact on $ \mathcal{M}\setminus\{p_{1},\ldots,p_{N}\}$, there exists a smooth function $f$ defined on $ \mathcal{M}\setminus\{p_{1},\ldots,p_{N}\}$ such that
$$\omega+\overline{\omega}={\rm d}f.$$
Considering the relation
$$\frac{4{\rm d}\Phi}{\Phi(4-\Phi)}={\rm d}\ln\frac{\Phi}{4-\Phi},$$
we have
$$ \ln\frac{\Phi}{4-\Phi}=f+A_{0},$$
which implies
$$\Phi=\frac{4e^{f+A_{0}}}{1+e^{f+A_{0}}}\in (0,4),$$
where $A_{0}=\ln\frac{\Phi(p_{0})}{4-\Phi(p_{0})}-f(p_{0})=\ln\frac{\Phi_{0}}{4-\Phi_{0}}-f(p_{0})$.\par

(1) $\Phi$ can be continuously extended to $p_{n},1\leq n\leq N$. \par
To prove this, it suffices to show that $\Phi$ has limits at each point $p_{n}$. Let $(D,z)$ be a local complex coordinate disk around a $p_{n}$ with $z(p_{n})=0$, where no other poles or zeros of $\omega$ exist in $D$. Without loss of generality, we can suppose
\begin{equation}\label{w-1}
\omega=\frac{\lambda_{n}}{z}{\rm d}z,
\end{equation}
where $\lambda_{n}\in\mathbb{R}\setminus\{0\}$. Then $(\omega+\overline{\omega})|_{D\setminus\{0\}}={\rm d}(\lambda_{n} \ln |z|^{2})={\rm d}f$, or
$f=\lambda_{n}\ln|z|^{2}+a^{*}$ on $D\setminus\{0\}$, where $a^{*}$ is a real constant. This leads to
\begin{equation}\label{Exp-1}
\Phi=\frac{4|z|^{2\lambda_{n}}e^{a^{*}+A_{0}}}{1+|z|^{2\lambda_{n}}e^{a^{*}+A_{0}}}~ on ~D\setminus\{0\}.
\end{equation}
Thus if $\lambda_{n}>0$, $\lim\limits_{z\rightarrow 0}\Phi=0$; if $\lambda_{n}<0$, $\lim\limits_{z\rightarrow 0}\Phi=4$.\par

(2) ${\rm d}s^{2}=\frac{\Phi(4-\Phi)}{4}\omega\overline{\omega}$ represents a Riemannian metric with constant Gauss curvature $K\equiv 1$.\par
Given $0< \Phi< 4$ and $\omega\neq0$ on $\mathcal{M}\setminus\{q_{1},\ldots,q_{I},p_{1},\ldots,p_{N}\}$, ${\rm d}s^{2}$ serves as a Riemannian metric on $M\setminus\{q_{1},\ldots,q_{I},p_{1},\ldots,p_{N}\}$. Consider a local complex coordinate domain $(X,z)$ on $\mathcal{M}\setminus\{q_{1},\ldots,q_{I},p_{1},\ldots,p_{N}\}$ with $\omega={\rm d}z$ on $X$. Then ${\rm d}s^{2}|_{X}=\frac{\Phi(4-\Phi)}{4}|{\rm d}z|^{2}$. A straightforward calculation reveals that the Gauss curvature of ${\rm d}s^{2}$ is $K\equiv 1$.\par

(3) The points $q_{i}, i=1,\ldots,I$ are conical singularities of ${\rm d}s^{2}$ with conical angles $2\pi(ord_{q_{i}}(\omega)+1),i=1,\ldots,I$, and $p_{n},n=1,\ldots,N$ are conical singularities of ${\rm d}s^{2}$ with conical angles $2\pi |{\rm Res}_{p_{n}}(\omega)|,n=1,\ldots, N$.\par

Select a local complex coordinate chart $(W,z)$ around $q_{i}$ with $z(q_{i})=0$ and suppose $\omega|_{W}=z^{\alpha_{i}-1}{\rm d}z$. Then $${\rm d}s^{2}|_{W\setminus\{0\}}=\frac{\Phi(4-\Phi)}{4}|z|^{2(\alpha_{i}-1)}|{\rm d}z|^{2}.$$ Thus, $q_{i}, i=1,\ldots,I$ are conical singularities of ${\rm d}s^{2}$ with conical angles $2\pi(ord_{q_{i}}(\omega)+1),i=1,\ldots,I$ respectively.

By equations (\ref{w-1}) and (\ref{Exp-1}), it can be easily shown that $p_{n},n=1,\ldots,N$ are conical singularities of ${\rm d}s^{2}$ with conical angles $2\pi |{\rm Res}_{p_{n}}(\omega)|,n=1,\ldots, N$ respectively. Additionally, ${\rm Res}_{p}(\omega)=1$ or ${\rm Res}_{p}(\omega)=-1$ indicates that $p$ is a smooth point of ${\rm d}s^{2}$.

\section{Classification of conformal spherical conical  metrics on $S^{2}_{\{\alpha,\alpha\}}$}

A spherical conical metric on $S^{2}$ that possesses exactly two conical singularities is necessarily reducible, with both singularities featuring equal conical angles (see \cite{Xu15}). By applying \textbf{Theorem \ref{Thm-1}}, we provide a novel proof for the classification of such metrics on $S^{2}_{\{\alpha,\alpha\}}$ as presented in \cite{Tr89}.\par

\begin{theorem}\label{Thm-2}
Regard $S^{2}$ as $\mathbb{C}\cup\{\infty\}$. Let ${\rm d}s^{2}$ be a spherical conical metric on $\mathbb{C}\cup\{\infty\}$ with conical singularities at $z=0$ and $z=\infty$,
each with conical angles $2\pi\alpha$ and $2\pi\alpha$ respectively, where $0<\alpha\neq1$. Then, up to a change of coordinate $z\mapsto pz$, where $p\in \mathbb{C}\setminus\{0\}$ is a constant, the metric can be expressed as follows.
\begin{enumerate}
\item If $\alpha\notin~\mathbb{Z}^{+}$, the metric is given by ${\rm d}s^{2}=\frac{4\alpha^{2}|z|^{2(\alpha-1)}}{(1+|z|^{2})^{2}}|{\rm d}z|^{2}$.
\item If $\alpha\in \mathbb{Z}^{+}$, the metric is given by ${\rm d}s^{2}=\frac{4\alpha^{2}|z|^{2(\alpha-1)}}{(1+|z^{\alpha}+b|^{2})^{2}}|{\rm d}z|^{2}$, where $b\in\mathbb{R}$ is a constant.
\end{enumerate}
\end{theorem}

Initially, we present a standard forms of certain meromorphic 1-forms on the Riemann sphere $S^{2}\cong\mathbb{C}\cup\{\infty\}$.\par

\begin{lemma}\label{L-1}
Let $\omega$ be an Abelian differential of the third kind on $\mathbb{C}\cup\{\infty\}$, with each of its residues being a nonzero real number. The standard forms of $\omega$ are as follows.
\begin{enumerate}
\item If $(\omega)=-0-\infty$, then $\omega=\frac{\lambda}{z}{\rm d}z$, where $\lambda={\rm Res}_{0}(\omega)\neq 0$.
\item If $(\omega)=(\alpha-1)\cdot0-\infty-\sum\limits_{i=1}^{\alpha}P_{i}, \alpha\geq2$, and ${\rm Res}_{p_{i}}(\omega)=1,i=1,\ldots,\alpha$, then, up to a change of coordinate $z\rightarrow pz$, where $p\in \mathbb{C}\setminus\{0\}$ is a constant, $\omega=\frac{\alpha z^{\alpha-1}}{z^{\alpha}+1}{\rm d}z$.
\item If $(\omega)=(\alpha-1)\cdot0+(\alpha-1)\cdot\infty-\sum\limits_{i=1}^{\alpha}P_{i}-\sum\limits_{j=1}^{\alpha}Q_{j}, \alpha\geq2$,  and ${\rm Res}_{p_{i}}(\omega)=1,{\rm Res}_{q_{i}}(\omega)=-1, i=1,\ldots,\alpha$, then, up to a change of coordinate $z\rightarrow pz$, where $p\in \mathbb{C}\setminus\{0\}$ is a constant,  $\omega=\frac{\alpha (a- 1) z^{\alpha-1}}{(z^{\alpha}+a)(z^{\alpha}+1)}{\rm d}z$, where $a\neq0,1$ is a complex constant.
\end{enumerate}
\end{lemma}

\begin{proof}
(1) It is evident and straightforward.\par

(2) Suppose $p_{i}=a_{i}\in\mathbb{C}\setminus\{0\},i=1,\ldots,\alpha,\mu\neq 0$ and
$$\omega=(\sum_{i=1}^{\alpha}\frac{1}{z-a_{i}}){\rm d}z=\frac{\alpha \mu z^{\alpha-1}}{\prod\limits_{i=1}^{\alpha}(z-a_{i})}{\rm d}z.$$

Setting $t(z)=\prod\limits_{i=1}^{\alpha}(z-a_{i})$, we find that
$$t'(z)=\alpha \mu z^{\alpha-1}.$$
Hence, $\mu=1$ and $t(z)= z^{\alpha}+a$, where $a\in\mathbb{C}\setminus\{0\}$ is a constant,  leading to $\omega=\frac{\alpha  z^{\alpha-1}}{z^{\alpha}+a}{\rm d}z$.

By setting $z=a^{\frac{1}{\alpha}}w$, we obtain
$$\omega=\frac{\alpha  w^{\alpha-1}}{w^{\alpha}+1}{\rm d}w.$$

(3) Suppose $p_{i}=a_{i},q_{i}=b_{i}\in\mathbb{C}\setminus\{0\},i=1,\ldots,\alpha,\mu\neq0$ and
$$\omega=(\sum_{i=1}^{\alpha}\frac{1}{z-a_{i}}-\sum_{i=1}^{\alpha}\frac{1}{z-b_{i}} ){\rm d}z=\frac{\alpha \mu z^{\alpha-1}}{\prod\limits_{i=1}^{\alpha}(z-a_{i})\prod\limits_{i=1}^{\alpha}(z-b_{i})}{\rm d}z.$$

Setting $t(z)=\prod\limits_{i=1}^{\alpha}(z-a_{i}),~~~s(z)=\prod\limits_{i=1}^{\alpha}(z-b_{i})$, we derive the equation
\begin{equation}\label{L-E-1}
t'(z)s(z)-t(z)s'(z)=\alpha \mu z^{\alpha-1}.
\end{equation}

By expressing $t(z)$ and $s(z)$ as polynomials
$$t(z)=\sum_{i=0}^{\alpha}\omega_{i}z^{i},~~~s(z)=\sum_{i=0}^{\alpha}\sigma_{i}z^{i},$$
with $\omega_{\alpha}=\sigma_{\alpha}=1$ and $\omega_{0},\sigma_{0}\neq 0$.
we can rewrite the equation  (\ref{L-E-1}) as
\begin{equation}\label{L-E-2}
\sum_{i=1}^{\alpha}\sum_{j=0}^{\alpha}i(\omega_{i}\sigma_{j}-\sigma_{i}\omega_{j})z^{i+j-1}=\alpha \mu z^{\alpha-1}.
\end{equation}

Through direct calculation, we obtain
$$t(z)=F(z)+\omega_{0},~~~s(z)=F(z)+\sigma_{0},$$
where $F(z)=z^{\alpha}+\omega_{\alpha-1}z^{\alpha-1}+\ldots+\omega_{1}z$.

By equation (\ref{L-E-1}), we obtain
$$F'(z)(\sigma_{0}-\omega_{0})=\alpha \mu z^{\alpha-1}.$$

This implies $\omega_{1}=\ldots=\omega_{\alpha-1}=0,\sigma_{0}-\omega_{0}=\mu$, hence
$$t(z)=z^{\alpha}+\omega_{0}, ~~~s(z)=z^{\alpha}+\sigma_{0}.$$

Thus, the differential $\omega$ can be expressed as
$$\omega=\frac{\alpha (\sigma_{0}-\omega_{0}) z^{\alpha-1}}{(z^{\alpha}+\omega_{0})(z^{\alpha}+\sigma_{0})}{\rm d}z.$$

By setting $z=\omega_{0}^{\frac{1}{\alpha}}w$, we obtain
$$\omega=\frac{\alpha (a-1)w^{\alpha-1}}{(w^{\alpha}+1)(w^{\alpha}+a)}{\rm d}w,$$
where $a=\frac{\sigma_{0}}{\omega_{0}}$ is a complex constant not equal to $0$ or $1$.
\end{proof}

\textbf{Proof of Theorem \ref{Thm-2}}\par
(1) Let $\omega_{}=\frac{\alpha}{z}{\rm d}z$. Through direct calculation, we find that
$${\rm d}s^{2}=\frac{4\alpha^{2}e^{A_{0}}|z|^{2(\alpha-1)}}{(1+e^{A_{0}}|z|^{2\alpha})^{2}}|{\rm d}z|^{2},$$
where $A_{0}\in\mathbb{R}$ is a constant.\par
Setting $z=e^{-\frac{A_{0}}{2\alpha}}w$, we obtain
$${\rm d}s^{2}=\frac{4\alpha^{2}|w|^{2(\alpha-1)}}{(1+|w|^{2\alpha})^{2}}|{\rm d}w|^{2}.$$

(2) Let $\omega=\frac{\alpha z^{\alpha-1}}{z^{\alpha}+1}{\rm d}z$ for $2\leq\alpha\in\mathbb{Z}^{+}$.
Through direct calculation, we derive
$${\rm d}s^{2}=\frac{4\alpha^{2}e^{A_{0}}|z|^{2(\alpha-1)}}{(1+e^{A_{0}}|z^{\alpha}+1|^{2})^{2}}|{\rm d}z|^{2},$$
where $A_{0}\in\mathbb{R}$ is a constant.\par
Setting $z=e^{\frac{-A_{0}}{2\alpha}}w$, then
$${\rm d}s^{2}=\frac{4\alpha^{2}|w|^{2(\alpha-1)}}{(1+|w^{\alpha}+b|^{2})^{2}}|{\rm d} w|^{2},$$
where $b=e^{\frac{A_{0}}{2}}\in\mathbb{R}\setminus\{0\}$.\par

(3) Let $\omega=\frac{\alpha (a-1) z^{\alpha-1}}{(z^{\alpha}+a)(z^{\alpha}+1)}{\rm d}z$, where $a$ is a complex constant with $a\neq0,1$.
Through direct calculation, we obtain
$${\rm d}s^{2}=\frac{4\alpha^{2}e^{A_{0}}|a-1|^{2}|z|^{2(\alpha-1)}}{(|z^{\alpha}+a|^{2}+e^{A_{0}}|z^{\alpha}+1|^{2})^{2}}|{\rm d}z|^{2},$$
where $A_{0}\in\mathbb{R}$ is a constant.\par
Denote $e^{A_{0}}$ by $\lambda^{2}$, then
\begin{equation*}
\begin{aligned}
{\rm d}s^{2}&=\frac{4\alpha^{2}\lambda^{2}|a-1|^{2}|z|^{2(\alpha-1)}}{(|z^{\alpha}+a|^{2}+\lambda^{2}|z^{\alpha}+1|^{2})^{2}}|{\rm d}z|^{2}\\
&=\frac{4\alpha^{2}\lambda^{2}|a-1|^{2}|z|^{2(\alpha-1)}}{(1+\lambda^{2})^{2}[|z^{\alpha}+\frac{a+\lambda^{2}}{1+\lambda^{2}}|^{2}+
\frac{\lambda^{2}|a-1|^{2}}{(1+\lambda^{2})^{2}}]^{2}}|{\rm d}z|^{2}.
\end{aligned}
\end{equation*}

Setting $z=pw$, where $|p^{\alpha}|=\frac{\lambda|a-1|}{1+\lambda^{2}}$ such that $b=\frac{a+\lambda^{2}}{p^{\alpha}(1+\lambda^{2})}\in\mathbb{R}$, then
$${\rm d}s^{2}=\frac{4\alpha^{2}|w|^{2(\alpha-1)}}{(1+|w^{\alpha}+b|^{2})^{2}}|{\rm d}w|^{2}.$$

\section{Some properties}

\begin{proposition}\label{Pro-2-1}
The function $\Phi$, as defined by equation (\ref{E-1}), exhibits local rotational symmetry on $\mathcal{M}$.
\end{proposition}

\begin{proof}~
Suppose $p_{0}\in \mathcal{M}\setminus\{\text{zeros and poles of } \omega\}$, then, there exists a complex coordinate chart $(U,z)$ with $z(p_{0})=0$ in $ \mathcal{M}\setminus\{\text{zeros and poles of }\omega \}$ such that $\omega={\rm d}z$. By setting $z=x+\sqrt{-1}y$ and using equation (\ref{E-1}), we derive
$$\frac{2{\rm d}\Phi}{\Phi(4-\Phi)}={\rm d}x.$$
Thus $\Phi(x)=\frac{4 e^{2x+2C}}{1+e^{2x+2C}}$, where $C$ is a constant. The metric ${\rm d}s^{2}$, as presented in \textbf{Theorem \ref{Thm-1}}, can be written as
\begin{equation*}
\begin{aligned}
{\rm d}s^{2}&=\frac{\Phi(4-\Phi)}{4}\omega\overline{\omega}=\frac{4e^{2x+2C}}{(1+e^{2x+2C})^{2}}({\rm d}x^{2}+{\rm d}y^{2})\\
&=(\frac{2e^{x+C}}{1+e^{2x+2C}}{\rm d}x)^{2}+(\frac{2e^{x+C}}{1+e^{2x+2C}}{\rm d}y)^{2}\\
&={\rm d}r^{2}+\sin^{2}r{\rm d}y^{2},
\end{aligned}
\end{equation*}
where $r=2\arctan e^{x+C}$. Consequently, $\Phi(r)=4\sin^{2}\frac{r}{2}$.\par
It is evident that $(r,y)$ serve as local polar coordinates centered at $p_{0}$ on $\mathcal{M}\setminus\{\text{zeros and poles of } \omega\}$. Since $\Phi$ is a continuous function on $\mathcal{M}$, it naturally exhibits local rotational symmetry on $\mathcal{M}$.
\end{proof}

Define Y by $\omega(Y)=\sqrt{-1}$, then $Y$ is a meromorphic vector field on $\mathcal{M}$ since $\omega$ is meromorphic 1-form. Additionally, we define the real vector field $\overrightarrow{V}$ as the real part of $Y$, given by
$$\overrightarrow{V}=\frac{1}{2}(Y+\overline{Y})=Re(Y).$$
The proof of \textbf{Proposition \ref{Pro-2-1}} yields the subsequent property.

\begin{proposition}\label{Pro-2-2}
Let $Sing(\overrightarrow{V})$ be the set of singular points of $\overrightarrow{V}$. Then,
$$Sing(\overrightarrow{V})=\{\text{poles and zeros of } \omega\}.$$
Additionally, $\overrightarrow{V}$ is a Killing vector field on $\mathcal{M}\setminus Sing(\overrightarrow{V})$ and $\overrightarrow{V}\perp \nabla \Phi$, where $\nabla \Phi$ denotes the real gradient of $\Phi$.
\end{proposition}

Let $\Omega_{p}$ be the set of all integral curves of $\overrightarrow{V}$ that intersect at point $p$. The cardinality of $\Omega_{p}$ is denoted by $|\Omega_{p}|$. For any point $p$ not in the singular set $Sing(\overrightarrow{V})$, there exists a unique integral curve of $\overrightarrow{V}$ passing through point $p$. Therefore, $\Omega_{p}$ is well-defined and consists of exactly two elements: one integral curve entering $p$ and another leaving $p$. However, at a singular point $p$ of $\overrightarrow{V}$, the concept of ``an integral curve of $\overrightarrow{V}$ passing through $p$" is ambiguous, as $\overrightarrow{V}$ may be undefined or vanish at point $p$. To clarify this, an integral curve $C$ of $\overrightarrow{V}$ is considered to be in $\Omega_{p}$ (for $p\in Sing(\overrightarrow{V})$) if and only if there exists a sequence of points in $C$ that converges to $p$. In simpler terms, if $C$ is not in $\Omega_{p}$, then there exists a small disk $B$ centered at $p$ such that $C\cap B=\emptyset$.\par

\begin{proposition}\label{Pro-2-3}
In a small neighborhood of any local extremal point of the function $\Phi$, any integral curve $c_{t}$ of the vector field $\overrightarrow{V}$ forms a topological circle that encompasses the extremal point within its interior.
\end{proposition}

\begin{proof}~
Without loss of generality, we can suppose $\omega=\frac{\alpha}{z}{\rm d}z$ with $\alpha\in \mathbb{R}\setminus\{0\}$ at an extremal point of $\Phi$. Then, we have $Y=\sqrt{-1}\frac{z}{\alpha}\frac{\partial}{\partial z}$. By setting $z=x+\sqrt{-1}y$, we obtain
 $$\overrightarrow{V}=\frac{1}{2\alpha}(x\frac{\partial}{\partial y}-y\frac{\partial}{\partial x}).$$
It is evident that any integral curve of $\overrightarrow{V}$ forms a topological circle containing the extremal point within its interior.
\end{proof}

\begin{proposition}\label{Pro-2-4}
 $Sing(\overrightarrow{V})$ can be partitioned into two disjoint subsets, $Sing(\overrightarrow{V})=S_{1}\cup S_{2}$, where
 \begin{enumerate}
 \item $S_{1}=\{p\in Sing(\overrightarrow{V}) ~|~ \Omega_{p}=\emptyset\}$, and if $p\in S_{1}$, then $p$ is an extremal point of $\Phi$.
 \item $S_{2}=\{p\in Sing(\overrightarrow{V}) ~|~ \Omega_{p}\neq\emptyset\}$, and if $p\in S_{2}$, then $p$ is a saddle point of $\Phi$ and at $p$ the angle of ${\rm d}s^{2}$ is $\pi\cdot|\Omega_{p}|$.
 \end{enumerate}
\end{proposition}

\begin{proof}~
Case (1) has been established based on \textbf{Proposition \ref{Pro-2-3}}. We now proceed to establish Case (2).  Without loss of generality, at a saddle point $p$ of $\Phi$, we can suppose $\omega=z^{n}{\rm d}z,n\in \mathbb{Z}^{+}$, where $z$ is a complex coordinate in a neighborhood of $p$ without any other singularities of $\overrightarrow{V}$, then $Y=\frac{\sqrt{-1}}{z^{n}}\frac{\partial}{\partial z}$. By setting $z=re^{\sqrt{-1}\theta}$,  we can simplify the expression for $Y$ as follows
\begin{equation*}
\begin{aligned}
Y&=\frac{\sqrt{-1}}{r^{n}e^{\sqrt{-1}n\theta}}( \frac{\partial r}{\partial z}\frac{\partial}{\partial r}+\frac{\partial \theta}{\partial z}\frac{\partial}{\partial\theta})=\frac{\sqrt{-1}}{2r^{n}e^{\sqrt{-1}(n+1)\theta}}(\frac{\partial}{\partial r}-\frac{\sqrt{-1}}{r}\frac{\partial}{\partial \theta})\\
&=\frac{1}{2r^{n}}[\sin(n+1)\theta\frac{\partial}{\partial r}+\frac{\cos(n+1)\theta}{r}\frac{\partial}{\partial\theta}+\sqrt{-1}(\cdots)].
\end{aligned}
\end{equation*}
Hence, the vector field $\overrightarrow{V}$ is given by
$$\overrightarrow{V}=\frac{\sin(n+1)\theta}{2r^{n}}\frac{\partial}{\partial r}+\frac{\cos(n+1)\theta}{2r^{n+1}}\frac{\partial}{\partial\theta}.$$

Suppose
\begin{equation*}
\begin{cases}
r=r(t),\\
\theta=\theta(t),
\end{cases}
t\in(-\varepsilon,\varepsilon)\setminus\{0\},\varepsilon>0
\end{equation*}
is an integral curve of $\overrightarrow{V}$  near the saddle point such that $\lim\limits_{t\rightarrow0}r(t)=0$. Then we obtain
\begin{equation}\label{P-2-E-1}
\begin{cases}
r'=\frac{\sin(n+1)\theta}{2r^{n}},\\
\theta'=\frac{\cos(n+1)\theta}{2r^{n+1}}.
\end{cases}
\end{equation}

When $\theta'\neq0$, the solution of equation (\ref{P-2-E-1})  is given by
$$r^{-(n+1)}=\lambda|\cos(n+1)\theta|,~~\lambda>0,$$
i.e.,
$$r=\frac{1}{\sqrt[n+1]{\lambda|\cos(n+1)\theta|}}\geq \frac{1}{\sqrt[n+1]{\lambda}}.$$
However, this curve does not meet $p$.

When $\theta'=0$, the solutions of equation (\ref{P-2-E-1})  are as follows
\begin{equation*}
\begin{cases}
r^{n+1}=-\frac{n+1}{2}t,\\
\theta=\frac{1}{n+1}[\frac{\pi}{2}+(2k+1)\pi],k=0,\ldots,n,
\end{cases}
t<0,
\end{equation*}
and
\begin{equation*}
\begin{cases}
r^{n+1}=\frac{n+1}{2}t,\\
\theta=\frac{1}{n+1}(\frac{\pi}{2}+2k\pi),k=0,\ldots,n,
\end{cases}
t>0.
\end{equation*}

Thus, case (2) is established.
\end{proof}

\begin{proposition}\label{Pro-2-5}
At a saddle point of the function $\Phi$, the angle between two neighboring integral curves of the gradient vector field $\nabla\Phi$ is $\pi$.
\end{proposition}

\begin{proof}~
According to the proof of \textbf{Proposition \ref{Pro-2-4}},  two neighboring integral curves of $\overrightarrow{V}$ at a saddle point of the function $\Phi$ is $\pi$. By \textbf{Proposition \ref{Pro-2-2}}, we know that $\nabla\Phi \perp \overrightarrow{V}$. Thus two neighboring integral curves of the gradient vector field $\nabla\Phi$ is $\pi$.
\end{proof}

According to \textbf{Proposition \ref{Pro-2-3}}, if $p$ is a local minimum point of $\Phi$, the integral curves of $\overrightarrow{V}$ form topologically concentric circles in a small neighborhood of $p$. Moreover, the integral curves of $\nabla \Phi$ are perpendicular to these circles. Let $c(t)~(t\in[0,T])$ be an integral curve of $\overrightarrow{V}$. For each $t$, there exists a unique integral curve $C_{t}$ (i.e., geodesic) of $\nabla \Phi$ originating from $p$ and intersecting the point $c(t)$. It is evident that $C_{t}$ must reach either a saddle point or a local maximum point of $\Phi$ (refer to Figure 2).

 \begin{figure}[htbp]
\centering
\includegraphics[scale=0.4]{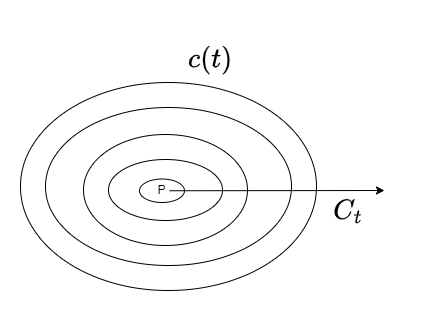}
\caption{\text{Integral curves} }
\end{figure}

\begin{proposition}\label{Pro-2-6}
Assuming that $C_{t}$ reaches the local maximum point $q$ directly without passing through any saddle point of $\Phi$, the arc length of $C_{t}$ is equal to $\pi$.
\end{proposition}
\begin{proof}~
Suppose $r:[0,l]\rightarrow \mathcal{M},s\mapsto r(s)$ is the parameter equation of curve $C_{t}$, where $s$ is the arc length parameter. By the proof of \textbf{Proposition \ref{Pro-2-1}}, we derive that
$$l=\int_{0}^{l}{\rm d}s=\int_{0}^{l}{\rm d}r=\int_{0}^{4}\frac{{\rm d}r}{{\rm d}\Phi}{\rm d}\Phi= \int_{0}^{4}\frac{1}{\sqrt{\Phi(4-\Phi)}}{\rm d}\Phi=\pi.$$
\end{proof}

From \textbf{Proposition  \ref{Pro-2-6}}, we obtain the following corollary.

\begin{corollary}\label{Co-2-1}
For any $t_{1}, t_{2} \in [0, T]$, if both $C_{t_{1}}$ and $C_{t_{2}}$ reach local maxima at points $q_{1}$ and $q_{2}$ respectively without traversing any saddle point of $\Phi$, then the geodesic distance between  $p$ and $q_{1}$ is equal to the geodesic distance between $p$ and $q_{2}$, both being $\pi$.
\end{corollary}

From \textbf{Proposition  \ref{Pro-2-6}} and the proof of \textbf{Proposition \ref{Pro-2-1}}, we derive the following property.

\begin{proposition}
The function $\Phi(r)$ monotonically increases along a geodesic connecting a minimum point to a maximum point of $\Phi$.
\end{proposition}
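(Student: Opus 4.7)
The plan is to reduce the statement to a one-variable ODE that admits an explicit, strictly monotone solution. By the discussion preceding Proposition \ref{Pro-2-4}, the geodesic in question is (after reparametrization) an integral curve $C_{t}$ of $\nabla\Phi$ running from a minimum $p$ of $\Phi$ to a maximum $q$ without crossing any saddle point. By Proposition \ref{Pro-2-4} its length equals $\pi$, so I may write $C_{t}:[0,\pi]\to M$ parametrized by arc length $s$, with $\Phi(C_{t}(0))=0$ and $\Phi(C_{t}(\pi))=4$.

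Along such a unit-speed gradient curve the chain rule gives $\frac{d\Phi}{ds}=\langle\nabla\Phi,\dot C_{t}\rangle=|\nabla\Phi|$. I would then derive the ODE
$$\frac{d\Phi}{ds}=\sqrt{\Phi(4-\Phi)}$$
either by computing $|\nabla\Phi|$ directly from ${\rm d}s^{2}=\frac{\Phi(4-\Phi)}{4}\omega\overline{\omega}$ together with equation (\ref{E-1}), or more quickly by invoking the local formula $\Phi(r)=4\sin^{2}(r/2)$ obtained inside the proof of Lemma \ref{Lemma-2-1}, in which $r$ is precisely the arc length along the radial geodesic through a regular reference point. The right-hand side of this ODE is strictly positive on $(0,4)$; since the interior of $C_{t}$ contains no critical point of $\Phi$, the value of $\Phi$ stays in $(0,4)$ there, so $\frac{d\Phi}{ds}>0$ on $(0,\pi)$, and continuity extends monotonicity to $[0,\pi]$. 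Equivalently, the unique solution of the ODE with $\Phi(0)=0$ is $\Phi(s)=4\sin^{2}(s/2)$, which is manifestly increasing on $[0,\pi]$.

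The only subtlety I anticipate is a patching issue: Lemma \ref{Lemma-2-1} supplies the expression $\Phi=4\sin^{2}(r/2)$ only within a local polar chart around each regular point, so I must check that the local coordinate $r$ agrees (up to an additive constant) with the global arc length of $C_{t}$ measured from $p$. This reduces to the observation that $C_{t}$ is a unit-speed geodesic in both parametrizations, so the two parameters coincide, after which the monotonicity is just elementary trigonometry. If one prefers to avoid the patching step entirely, the intrinsic identity $|\nabla\Phi|=\sqrt{\Phi(4-\Phi)}$ (read off pointwise from the metric and equation (\ref{E-1})) already delivers the conclusion without any reference to a chart.
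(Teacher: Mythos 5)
Your proposal is correct and follows essentially the same route as the paper, which derives the proposition directly from the local normal form $\Phi(r)=4\sin^{2}(r/2)$ in the proof of Lemma \ref{Lemma-2-1} together with the arc-length computation of Proposition \ref{Pro-2-4}; your identity $|\nabla\Phi|=\sqrt{\Phi(4-\Phi)}$ is exactly the relation underlying that computation. The patching concern you raise is handled just as you suggest, since the local coordinate $r$ is the arc length along the gradient trajectory.
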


\begin{proposition}\label{Pro-2-7}
 Let $t_{0}\in (0,T)$ be fixed. Suppose that $C_{t_{0}}$ directly reaches a maximum point $q$ of $\Phi$. Then, there exists $\varepsilon>0$ such that for all $t\in(t_{0}-\varepsilon,t_{0}+\varepsilon)$, $C_{t}$ also reaches the same maximum point $q$ without passing through any saddle point of $\Phi$.
\end{proposition}
\begin{proof}~
Since the number of saddle points of $\Phi$ is finite, there exists a small neighborhood $(t_{0}-\varepsilon, t_{0}+\varepsilon)$ with $\varepsilon>0$ centered at $t_{0}$ where each curve $C_{t}$ for $t\in(t_{0}-\varepsilon, t_{0}+\varepsilon)$ does not intersect any saddle point of $\Phi$. The endpoints of $C_{t}$ vary continuously with $t$, and the maximum points of $\Phi$ are limited in number. Thus, there exists $\varepsilon>0$ such that for all $t\in(t_{0}-\varepsilon, t_{0}+\varepsilon)$, $C_{t}$ reaches the same maximum point.
\end{proof}

\section{Proof of the Theorem \ref{Main-th-1}}

By virtue of \textbf{Proposition \ref{Pro-2-3}}, the set $\bigcup_{t\in(t_{0}-\varepsilon, t_{0}+\varepsilon)}C_{t}$ forms a simply connected domain within $\mathcal{M}$. Let $F$ denote the largest simply connected domain in $\mathcal{M}$ that encompasses $\bigcup_{t\in(t_{0}-\varepsilon, t_{0}+\varepsilon)}C_{t}$ and satisfies the following three conditions:
\begin{enumerate}
\item Every curve traced by the vector field $\nabla\Phi$ within $F$ connects points $p$ and $q$.
\item  No curve traced by $\nabla \Phi$ within $F$ passes through any saddle point of $\Phi$.
\item The boundary of $F$ consists of curves traced by $\nabla\Phi$ that pass through some saddle points of $\Phi$.
\end{enumerate}

The boundary of $F$ can be divided into two curves, denoted as $\gamma_{1}$ and $\gamma_{2}$, connecting points $p$ and $q$. Along $\gamma_{1}$, there exist saddle points connected by geodesic segments (integral curves of $\nabla\Phi$). At each saddle point, according to \textbf{Proposition \ref{Pro-2-3}}, the included angle between two adjacent geodesics is $\pi$, ensuring the smoothness of $\gamma_{1}$ at each saddle point. Consequently, $\gamma_{1}$ and $\gamma_{2}$ are both smooth geodesics.

Evidently, $F$ can be characterized as a bigon, and the metric ${\rm d}s^{2}$ represents a conformal spherical metric on $F$. Thus, the pair $(F,{\rm d}s^{2})$ is isometric to a piece obtained by cutting a football along a geodesic that links the two conical singularities.

At a minimum point $p$ of $\Phi$, since there are a finite number of integral curves of $\nabla\Phi$ originating from $p$ and reaching saddle points, we can iteratively apply the above argument to obtain finite pieces of the largest simply connected domains in $\mathcal{M}$, each of them is isometric to a piece obtained by cutting a football along a geodesic that links the two conical singularities and contains $p$ as a vertex. This process can be repeated at each minimum point of $\Phi$ to yield a finite collection of pieces. We assert that every point in $\mathcal{M}$ lies within the union of these pieces. For any point in $\mathcal{M}$, there exists an integral curve of $\nabla\Phi$ that either starts from it, ends at it, or passes through it. If saddle points exist along this curve, the point must lie on the boundary of a largest domain; otherwise, it must lie within a largest domain. Consequently, every point in $\mathcal{M}$ is encompassed by the union of these pieces, and their uniqueness is evident.

\section{Geometric construction of spherical conical metrics on $S^{2}_{\{\alpha,\alpha\}}$}

\textbf{Theorem \ref{Main-th-1}} implies that reducible spherical conical metrics can be constructed on a compact Riemann surface using some suitable footballs. To illustrate, we demonstrate the construction of reducible spherical conical metrics on $S^{2}_{\{\alpha,\alpha\}}, 2\leq\alpha\in\mathbb{Z}^{+}$, which differ from a football.\par

 Suppose $S^{2}_{\{\alpha,\alpha\}}$ has a (reducible) spherical conical metric which is not a football, then the function $\Phi$ defined by equation (\ref{E-1}) has one or two saddle points. \par

 When $\Phi$ has only one saddle point, we proceed as follows: Take $\alpha$ standard footballs $S^{2}_{\{1,1\}}$ and cut each $S^{2}_{\{1,1\}}$ along the meridian (i.e. geodesic) from an extremal point $m$ of $\Phi$ (defined on $S^{2}_{\{1,1\}}$) to a point $s$. Ensure the distance between $s$ and $m$ satisfies $dis(s,m)=\lambda<\pi$. Then, gluing these arcs together (as shown in Figure 3), we obtain a reducible spherical conical metric on $S^{2}_{\{\alpha,\alpha\}}$. In this metric, the singularity at $s$ acts as a saddle point of $\widetilde{\Phi}$ with an angle of $2\pi\alpha$, while point $m$ serves as an extremal point of $\widetilde{\Phi}$ (defined on $S^{2}_{\{\alpha,\alpha\}}$) with an angle of $2\pi\alpha$.

\begin{figure}[htbp]
\centering
\includegraphics[width=12cm]{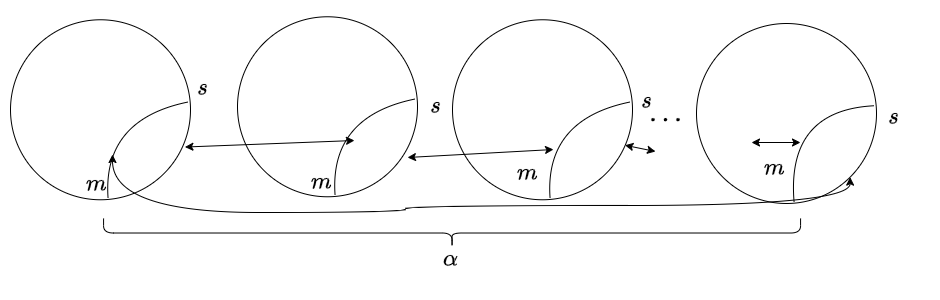}
\caption{$\Phi~has ~only~one~ saddle~point.$}
\end{figure}

 When $\Phi$ has two saddle points, we proceed as follows: Take $\alpha$ standard footballs $S^{2}_{\{1,1\}}$ and cut each $S^{2}_{\{1,1\}}$ along the meridian (i.e. geodesic) from a point $s_{1}$ to a point $s_{2}$ (both are not the extremal point of $\Phi$ defined on $S^{2}_{\{1,1\}}$). Ensure the distance between $s_{1}$ and $s_{2}$ satisfying $dis(s_{1},s_{2})=\lambda<\pi$.  Then, gluing these arcs together (as shown in Figure 4), we obtain a reducible spherical conical metric on $S^{2}_{\{\alpha,\alpha\}}$. In this metric, the singularities at $s_{1}$ and $s_{2}$ act as saddle points of $\widetilde{\Phi}$ (defined on $S^{2}_{\{\alpha,\alpha\}}$) with angles of $2\pi\alpha$ and $2\pi\alpha$.

\begin{figure}[htbp]
\centering
\includegraphics[width=12cm]{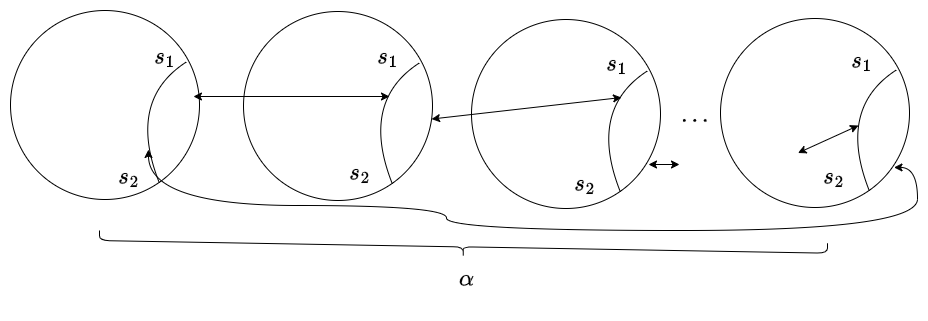}
\caption{$\Phi~has~two~saddle~ points.$}
\end{figure}

Although we have two methods for constructing a reducible spherical conical metric on $S^{2}_{\{\alpha,\alpha\}}$, these metrics are identical if the distance between the two singularities is equal. Through these constructions, we have proven the following theorem.

\begin{theorem}
Two spherical conical metrics on $S^{2}_{\{\alpha,\alpha\}}$ are identical if and only if the distances between their respective conical singularities are equal.
\end{theorem}

\section{Examples of reducible spherical cone metrics on $S^{2}$}

This section focuses on constructing some explicit reducible spherical conical metrics on $S^{2}$. To facilitate understanding, we introduce the following identity $$S^{2}_{\{\alpha_{1},\ldots,\alpha_{I}\}} = S^{2}_{\{\beta_{1},\ldots,\beta_{J}\}} + S^{2}_{\{\gamma,\gamma\}}.$$
This identity illustrates that a sphere $S^{2}_{\{\alpha_{1},\ldots,\alpha_{I}\}}$ with a reducible spherical conical metric can be decomposed into two components. The first component is a sphere $S^{2}_{\{\beta_{1},\ldots,\beta_{J}\}}$ with a reducible spherical conical metric. The second component is a football $S^{2}_{\{\gamma,\gamma\}}$. Alternatively, this identity also demonstrates that by attaching a football $S^{2}_{\{\gamma,\gamma\}}$ to a sphere $S^{2}_{\{\beta_{1},\ldots,\beta_{J}\}}$ with a reducible spherical conical metric, we can construct a reducible spherical conical metric on $S^{2}_{\{\alpha_{1},\ldots,\alpha_{I}\}}$.\par

\begin{figure}[htbp]
\centering
\includegraphics[width=9cm]{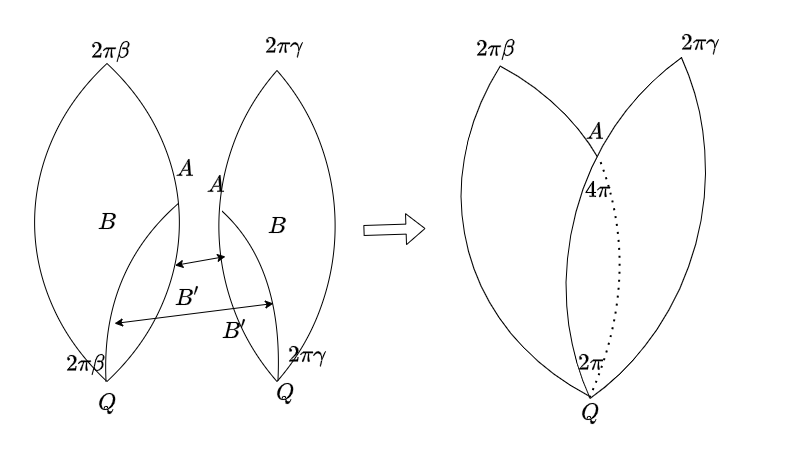}
\caption{$S^{2}_{\{2,\beta,\gamma\}},\beta+\gamma=1.$}
\end{figure}
\begin{example}
Construct a reducible spherical conical metric on $S^{2}_{\{2,\beta,\gamma\}}$ with the condition that $\beta+\gamma=1$.\par

Take two footballs, denoted as $S^{2}_{\{\beta,\beta\}}$ and $S^{2}_{\{\gamma,\gamma\}}$. Cut $S^{2}_{\{\beta,\beta\}}$ along the meridian from point $A$ to point $Q$, where $Q$ is an extremal point of $\widehat{\Phi}$ (defined on $S^{2}_{\{\beta,\beta\}}$), but $A$ is not. The arc $\widehat{AQ}$ splits into two identical arcs, $B$ and $B'$. Similarly, cut $S^{2}_{\{\gamma,\gamma\}}$ along the meridian to obtain arcs $B$ and $B'$. Join arc $B$ from $S^{2}_{\{\beta,\beta\}}$ with arc $B$ from $S^{2}_{\{\gamma,\gamma\}}$, and do the same for arcs $B'$ (as shown in Figure 5). Since $\beta+\gamma=1$, we construct a reducible spherical conical metric on $S^{2}_{\{2,\beta,\gamma\}}$, where singularity at $A$ with an angle of $4\pi$ acts as the saddle point of $\Phi$ (defined on $S^{2}_{\{2,\beta,\gamma\}}$). Thus
$$S^{2}_{\{2,\beta,\gamma\}}= S^{2}_{\{\beta,\beta\}}+S^{2}_{\{\gamma,\gamma\}}.$$

Obviously, this method can be generalized to construct a reducible spherical conical metric on $S^{2}_{\{2,\beta,\gamma,\beta+\gamma\}}$ for any values of $\beta$ and $\gamma$ such that $\beta+\gamma\neq1$.

\end{example}

\begin{example}
Construct a reducible spherical conical metric on $S^{2}_{\{3,\beta,\gamma\}}$ with the condition that $\beta+\gamma=2$.\par
\begin{figure}[htbp]
\centering
\includegraphics[width=10cm]{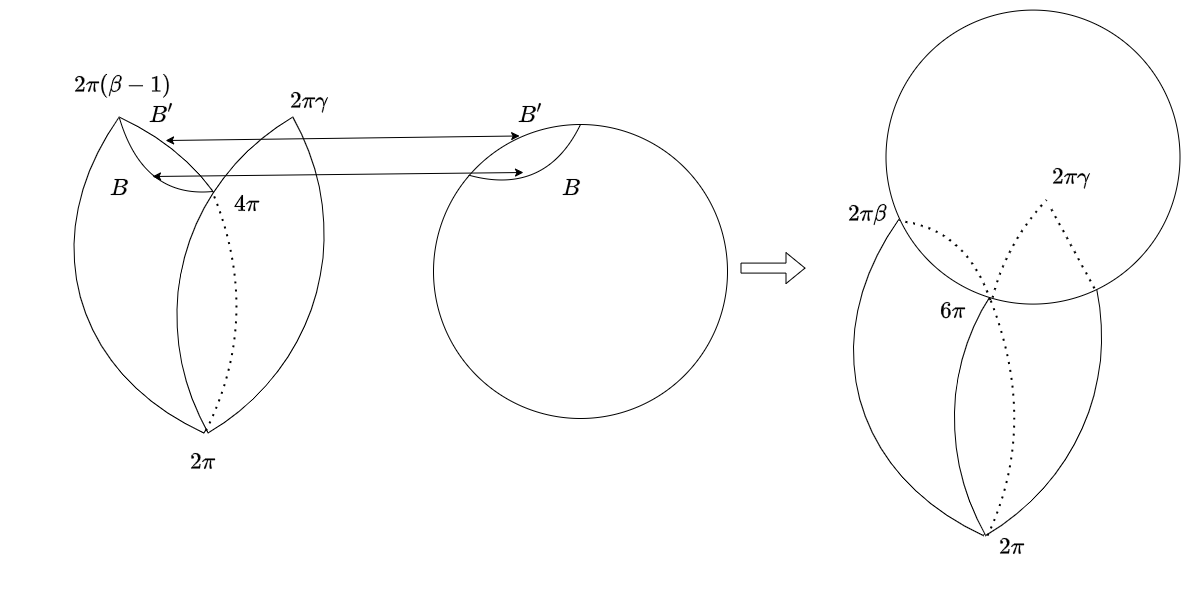}
\caption{$S^{2}_{\{3,\beta,\gamma\}},\beta+\gamma=2.$}
\end{figure}
Since $\beta+\gamma=2$, we can choose values for $\beta$ and $\gamma$ such that $1<\beta<2$ and $0<\gamma<1$. According to \textbf{Example 7.1}, there exists a reducible spherical conical metric on $S^{2}_{\{2,\beta-1,\gamma\}}$, since $(\beta-1)+\gamma=1$. Cut along the geodesic on $S^{2}_{\{2,\beta-1,\gamma\}}$ that connects the singularities with corresponding angles of $4\pi$ and $2\pi(\beta-1)$, respectively. We obtain two arcs $B$ and $B'$, each with a length less than $\pi$. Take a standard football $S^{2}_{\{1,1\}}$ and cut it along a meridian such that the resulting arcs from one extremal point to a point have lengths equal to those of arcs $B$ and $B'$ on $S^{2}_{\{2,\beta-1,\gamma\}}$. Join arc $B$ from $S^{2}_{\{2,\beta-1,\gamma\}}$ with arc $B$ from $S^{2}_{\{1,1\}}$, and do the same for arcs $B'$ (as shown in Figure 6). Then, we construct a reducible spherical conical metric on $S^{2}_{\{3,\beta,\gamma\}}$. The resulting metric can be expressed as
$$S^{2}_{\{3,\beta,\gamma\}}=S^{2}_{\{2,\beta-1,\gamma\}}+S^{2}_{\{1,1\}}.$$
\end{example}

\begin{example}
Construct a reducible spherical conical metric on $S^{2}_{\{3,2,2\}}$ such that the function $\Phi$ defined by equation (\ref{E-1}) has 1,2,3 saddle points, respectively.\par
(1)~A reducible spherical conical metric on $S^{2}_{\{3,2,2\}}$ such that $\Phi$ has 1 saddle point.\par

\begin{figure}[htbp]
\centering
\includegraphics[width=10cm]{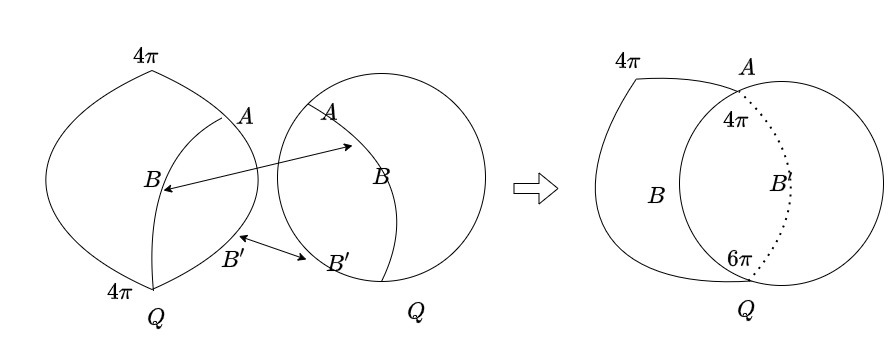}
\caption{$\Phi~ has~ one~ saddle~ point.$}
\end{figure}
\textbf{Method one}~~Consider two footballs, denoted as $S^{2}_{\{2,2\}}$ and $S^{2}_{\{1,1\}}$. Cut $S^{2}_{\{2,2\}}$ along the meridian from a singularity point $Q$ to point $A$ such that the distance between $A$ and $Q$ is least than $\pi$. The segment $\widehat{AQ}$ divides into two identical arcs, labeled as $B$ and $B'$. Similarly, cut $S^{2}_{\{1,1\}}$ along the meridian to obtain arcs $B$ and $B'$. Next, glue arc $B$ from $S^{2}_{\{1,1\}}$ to $S^{2}_{\{2,2\}}$ and do the same for arcs $B'$ (as shown in Figure 7). Then, we construct a reducible spherical conical metric on $S^{2}_{\{3,2,2\}}$. In this metric, one singularity with an angle of $4\pi$ serves as the unique saddle point of $\Phi$. The resulting metric can be expressed as
$$S^{2}_{\{3,2,2\}}=S^{2}_{\{2,2\}}+S^{2}_{\{1,1\}}.$$

\begin{figure}[htbp]
\centering
\includegraphics[width=10cm]{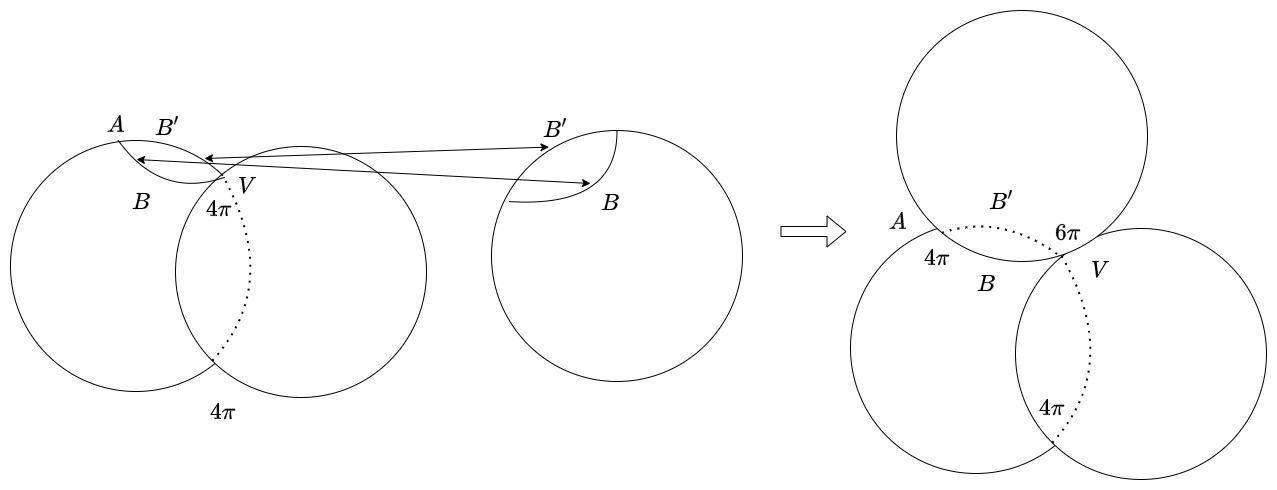}
\caption{$\Phi~ has~ one~ saddle~ point.$}
\end{figure}
\textbf{Method two}~~Consider a standard football $S^{2}_{\{1,1\}}$ and a reducible spherical conical metric on $S^{2}_{\{2,2\}}$ such that one singularity is an extremal point  of the function $\widehat{\Phi}$ (defined on $S^{2}_{\{2,2\}}$) and another singularity is a saddle point (denoted by $V$) of $\widehat{\Phi}$. Denote another extremal point of $\widehat{\Phi}$ by $A$. Cut $S^{2}_{\{2,2\}}$ along a geodesic from $A$ to $V$. We obtain two identical arcs, $B$ and $B'$. Similarly, cut the standard football $S^{2}_{\{1,1\}}$ along the meridian yields arcs $B$ and $B'$. Next, glue arc $B$ form $S^{2}_{\{2,2\}}$ to $S^{2}_{\{1,1\}}$ and do the same for arcs $B'$ (as shown in Figure 8). Then, we construct a reducible spherical conical metric on $S^{2}_{\{3,2,2\}}$, where the singularity of angle $6\pi$ serves as the unique saddle point of $\Phi$. The resulting metric can be expressed as
$$S^{2}_{\{3,2,2\}}=S^{2}_{\{2,2\}}+S^{2}_{\{1,1\}}.$$

(2)~A reducible spherical conical metric on $S^{2}_{\{3,2,2\}}$ such that $\Phi$ has 2 saddle points.\par

\begin{figure}[htbp]
\centering
\includegraphics[width=12cm]{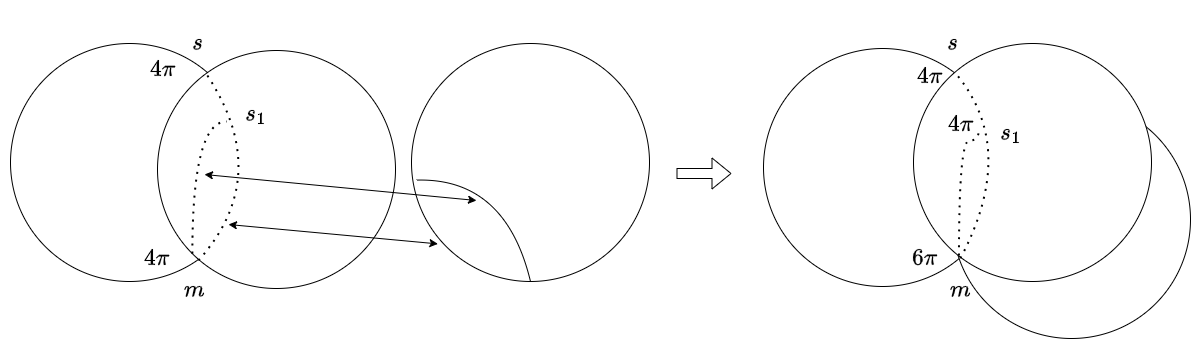}
\caption{$\Phi~ has~ two~ saddle~ points.$}
\end{figure}
\textbf{Method one}~~Consider a standard football $S^{2}_{\{1,1\}}$ and a reducible spherical conical metric on $S^{2}_{\{2,2\}}$ such that two singularities are extremal point (denoted by $m$) and saddle point (denoted by $s$) of the function $\widehat{\Phi}$ (defined on $S^{2}_{\{2,2\}}$), respectively.  Choose a new point (denoted by $s_{1}$) on the geodesic connected the point $m$ and its antipodal point such that $s$ and $s_{1}$ lie on this geodesic. Cut $S^{2}_{\{2,2\}}$ along this geodesic from $m$ to $s_{1}$. We obtain two identical arcs, $B$ and $B'$. Similarly, cut the standard football $S^{2}_{\{1,1\}}$ along the meridian yields arcs $B$ and $B'$. Next, glue $B$ in $S^{2}_{\{2,2\}}$ with $B$ in $S^{2}_{\{1,1\}}$ and do the same for arcs $B'$ (as shown in Figure 9). Then, we construct a reducible spherical conical metric on $S^{2}_{\{3,2,2\}}$, where the two singularities of angles $4\pi$ and $4\pi$ are saddle points of $\Phi$. The resulting metric can be expressed as
$$S^{2}_{\{3,2,2\}}=S^{2}_{\{2,2\}}+S^{2}_{\{1,1\}}.$$

\begin{figure}[htbp]
\centering
\includegraphics[width=12cm]{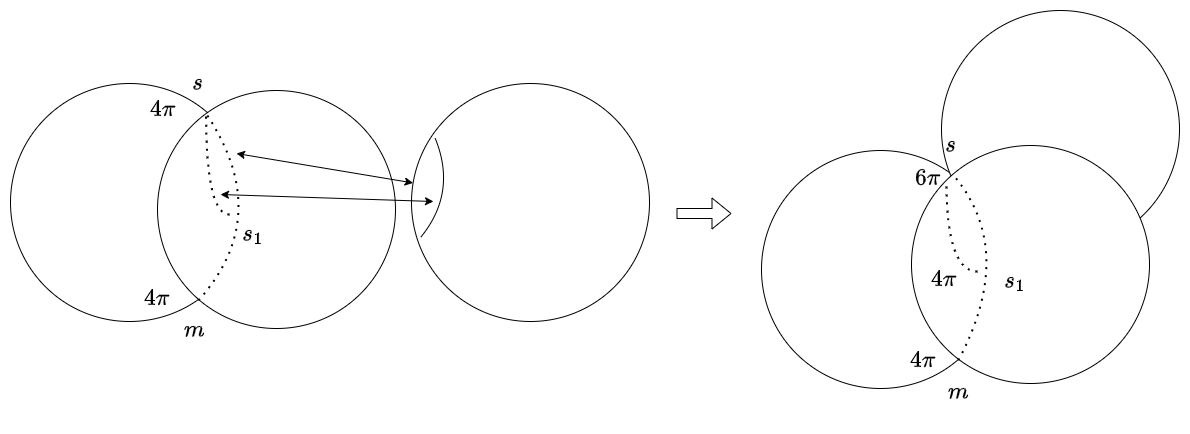}
\caption{$\Phi~ has~ two~ saddle~ points.$}
\end{figure}
\textbf{Method two}~~Consider a standard football $S^{2}_{\{1,1\}}$ and a reducible spherical conical metric on $S^{2}_{\{2,2\}}$ such that two singularities are extremal point (denoted by $m$) and saddle point (denoted by $s$) of the function $\widehat{\Phi}$ (defined on $S^{2}_{\{2,2\}}$).  Choose a point (denoted by $s_{1}$) on the geodesic connected the point $m$ and its antipodal point such that $s$ and $s_{1}$ lie on this geodesic. Cut $S^{2}_{\{2,2\}}$ along this geodesic from $s$ to $s_{1}$. We obtain two identical arcs, $B$ and $B'$. Similarly, cut the standard football $S^{2}_{\{1,1\}}$ along the meridian yields arcs $B$ and $B'$. Next, glue $B$ in $S^{2}_{\{2,2\}}$ with $B$ in $S^{2}_{\{1,1\}}$ and do the same for arcs $B'$ (as shown in Figure 10). Then, we construct a reducible spherical conical metric on $S^{2}_{\{3,2,2\}}$, where the two singularities of angles $6\pi$ and $4\pi$ are two saddle points of $\Phi$. The resulting metric can be expressed as
$$S^{2}_{\{2,2,3\}}=S^{2}_{\{2,2\}}+S^{2}_{\{1,1\}}.$$

(3)~A reducible spherical conical metric on $S^{2}_{\{3,2,2\}}$ such that $\Phi$ has 3 saddle points.\par

\begin{figure}[htbp]
\centering
\includegraphics[width=12cm]{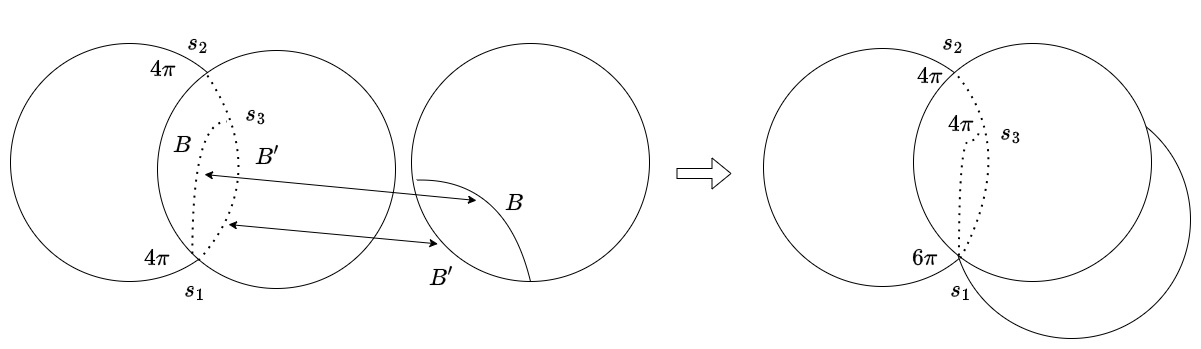}
\caption{$\Phi~ has~ three~ saddle~ points.$}
\end{figure}
~~Consider a standard football $S^{2}_{\{1,1\}}$ and a reducible spherical conical metric on $S^{2}_{\{2,2\}}$ such that two singularities are both saddle points (denoted by $s_{1}$ and $s_{2}$ respectively) of the function $\widehat{\Phi}$ (defined on $S^{2}_{\{2,2\}}$). Choose a new point (denoted by $s_{3}$) on the geodesic which connects points $s_{1}$ and $s_{2}$. Cut $S^{2}_{\{2,2\}}$ along this geodesic from $s_{1}$ to $s_{3}$. We obtain two identical arcs, $B$ and $B'$. Similarly, cut the standard football $S^{2}_{\{1,1\}}$ along the meridian yields arcs $B$ and $B'$. Next, glue $B$ in $S^{2}_{\{2,2\}}$ with $B$ in $S^{2}_{\{1,1\}}$ and do the same for arcs $B'$ (as shown in Figure 11). Then, we construct a reducible spherical conical metric on $S^{2}_{\{3,2,2\}}$, where three singularities are all saddle points of $\Phi$. The resulting metric can be expressed as
$$S^{2}_{\{3,2,2\}}=S^{2}_{\{2,2\}}+S^{2}_{\{1,1\}}.$$
\end{example}

\begin{remark}
In the construction of \textbf{Example 7.3 (3)}, the 3 saddles lie on the same geodesic connecting a minimum point and a maximum point of $\Phi$. Nevertheless, the specific locations of these three saddles can vary.
\end{remark}

\begin{example}
Figure 12 shows some constructions of reducible spherical conical metrics on $S^{2}$, where the function $\Phi$, defined by equation (\ref{E-1}) has 2 saddle points, each with an angle of $4\pi$. In cases $A$, $B'$, and $C$, these saddle points are located on a common geodesic that directly links a minimum point to a maximum point of $\Phi$ on the sphere. These constructions provide more concrete examples for understanding reducible spherical conical metrics.\par

\begin{figure}[htbp]
\centering
\includegraphics[width=13cm]{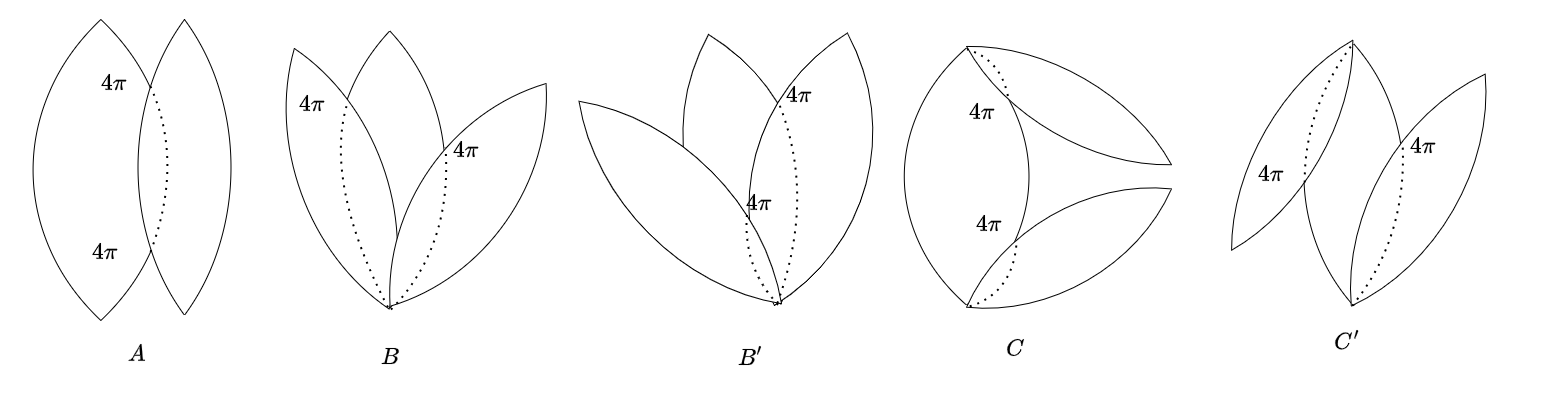}
\caption{$\Phi~has~ two~ saddle~ points.$}
\end{figure}

\begin{enumerate}
\item A reducible spherical conical metric on $S^{2}_{\{2,2,\beta,\beta\}},\beta\neq1$ can be constructed by case $A$, i.e.
$$S^{2}_{\{2,2,\beta,\beta\}}=S^{2}_{\{1,1\}}+S^{2}_{\{\beta,\beta\}}.$$

\item A reducible spherical conical metric on $S^{2}_{\{2,2,\beta,\beta+2\}}(\beta\neq1)$ can be  constructed by cases $B$ or $B'$, i.e.
$$S^{2}_{\{2,2,\beta,\beta+2\}}=S^{2}_{\{1,1\}}+S^{2}_{\{\beta,\beta\}}+S^{2}_{\{1,1\}}.$$

\item A reducible spherical conical metric on $S^{2}_{\{2,2,\beta_{1},\beta_{2},\beta_{1}+\beta_{2}+1\}}(\beta_{1},\beta_{2}\neq1)$ can be  constructed by the cases $B$ or $B'$, i.e.
$$S^{2}_{\{2,2,\beta_{1},\beta_{2},\beta_{1}+\beta_{2}+1\}}=S^{2}_{\{\beta_{1},\beta_{1}\}}+S^{2}_{\{\beta_{2},\beta_{2}\}}+S^{2}_{\{1,1\}}.$$

\item A reducible spherical conical metric on $S^{2}_{\{2,2,\beta_{1},\beta_{2},\gamma_{1}\}}(\beta_{1},\beta_{2},\gamma_{1}\neq1,\beta_{1}<1,\beta_{1}+\beta_{2}=1+\gamma_{1})$ can be  constructed by cases $C$ or $C'$, i.e.
$$S^{2}_{\{2,2,\beta_{1},\beta_{2},\gamma_{1}\}}=S^{2}_{\{\beta_{2}-\gamma_{1},\beta_{2}-\gamma_{1}\}}+S^{2}_{\{\beta_{1},\beta_{1}\}}+S^{2}_{\{\gamma_{1},\gamma_{1}\}}.$$

\item A reducible spherical conical metric on $S^{2}_{\{2,2,\beta_{1},\beta_{2},\gamma_{1},\gamma_{2}\}}(\beta_{1},\beta_{2},\gamma_{1},\gamma_{2}\neq1,\beta_{1}\leq\beta_{2},\gamma_{1}\leq\gamma_{2}, \beta_{1}+\beta_{2}=\gamma_{1}+\gamma_{2})$ can be construct as follows. \par
\begin{enumerate}
\item If $\beta_{1}=\gamma_{1}=\beta_{2}=\gamma_{2}=\beta$,
this metric can be constructed by the case $A$, i.e.
$$S^{2}_{\{2,2,\beta,\beta,\beta,\beta\}}=S^{2}_{\{\beta,\beta\}}+S^{2}_{\{\beta,\beta\}}.$$

\item If $\beta_{1}=\gamma_{1}<\beta_{2}=\gamma_{2}$,
this metric can be constructed by cases $C$ or $C'$, i.e.
$$S^{2}_{\{2,2,\beta_{1},\beta_{2},\beta_{1},\beta_{2}\}}=S^{2}_{\{\beta_{2}-\beta_{1},\beta_{2}-\beta_{1}\}}+S^{2}_{\{\beta_{1},\beta_{1}\}}+S^{2}_{\{\beta_{1},\beta_{1}\}}.$$
or by case $A$, i.e.
$$S^{2}_{\{2,2,\beta_{1},\beta_{2},\beta_{1},\beta_{2}\}}=S^{2}_{\{\beta_{1},\beta_{1}\}}+S^{2}_{\{\beta_{2},\beta_{2}\}}.$$

\item If $\beta_{1}<\gamma_{1}$, then $\beta_{2}-\gamma_{2}=\gamma_{1}-\beta_{1}>0$, this metric can be constructed by cases $C$ or $C'$, i.e.
$$S^{2}_{\{2,2,\beta_{1},\beta_{2},\gamma_{1},\gamma_{2}\}}=S^{2}_{\{\beta_{2}-\gamma_{2},\beta_{2}-\gamma_{2}\}}+S^{2}_{\{\gamma_{2},\gamma_{2}\}}+S^{2}_{\{\beta_{1},\beta_{1}\}}.$$
\end{enumerate}
\end{enumerate}
\end{example}

\textbf{Example 7.4} implies that some reducible spherical conical metrics on $S^{2}$ can be constructed by using some suitable footballs such that some singularities lie on the same geodesic connecting two extremal points of $\Phi$ with length $\pi$. We will utilize this observation to demonstrate the existence of reducible spherical conical metrics.

\section{Existence of reducible spherical conical metrics on  $S^{2}_{\{\alpha,\beta,\gamma\}}$}

In this section, we will apply \textbf{Theorems \ref{Thm-1}} and \textbf{\ref{Main-th-1}} to establish the necessary and sufficient conditions for the existence of reducible spherical conical metrics on $S^{2}_{\{\alpha,\beta,\gamma\}}$. We will present detailed proofs of these conditions in the following subsections. Notably, this subject has been previously explored by Umehara and Yamada \cite{UY00}, as well as Eremenko \cite{Er04}. \par

Let ${\rm d}s^{2}$ be a reducible spherical conical metric on $S^{2}_{\{\alpha,\beta,\gamma\}}$. Let $\omega$ be a character 1-form of ${\rm d}s^{2}$. Since $\omega$ is an Abelian differential on the Riemann sphere $S^{2}$, according to the Riemann-Roch theorem and \textbf{Theorem \ref{Thm-1}} , we can deduce the following statements.
\begin{enumerate}
\item  A single order among $\alpha,\beta$ and $\gamma$ is an integer.
\item  All of the orders $\alpha,\beta$ and $\gamma$ are integers.
\end{enumerate}

When only one of order among $\alpha,\beta$ and $\gamma$ is an integer, $\omega$ has a unique zero, which is the singularity corresponding to the integer order, while the other singularities are simple poles of $\omega$. Therefore, we only need to consider the signs of the residues of $\omega$ at these two singularities. By examining various cases, we derive the following theorem.\par

\begin{theorem}\label{Main-th-3-0}
Let $\alpha$ be an integer with $\alpha \geq 2$, and let $\beta$ and $\gamma$ be non-integer  positive real numbers. There exists a (reducible) spherical conical metric on $S^{2}_{\{\alpha,\beta,\gamma\}}$ if and only if one of the following conditions  is satisfied.
\begin{enumerate}
\item Both $\alpha-\beta-\gamma-1$ and $\alpha+\beta+\gamma-1$ are nonnegative even integers, which is equivalent to $\alpha-\beta-\gamma$ and $\alpha+\beta+\gamma$ being positive odd integers.
\item Both $\alpha+\beta-\gamma-1$ and $\alpha-\beta+\gamma-1$ are nonnegative even integers, which is equivalent to $\alpha+\beta-\gamma$ and $\alpha-\beta+\gamma$ being positive odd integers.
\end{enumerate}
\end{theorem}

When all orders $\alpha,\beta$ and $\gamma$ are integers, $\omega$ may have one, two or three zeros among the singularities. By examining various cases, we can derive the following three theorems.

\begin{theorem}\label{Main-th-3-1}
Let $\alpha,\beta$ and $\gamma$ be integers, each greater than or equal to 2.  There exists a (reducible) spherical conical metric on $S^{2}_{\{\alpha,\beta,\gamma\}}$ such that the conical singularity of angle $2\pi\alpha$ is the unique saddle point of $\Phi$, as defined by equation (\ref{E-1}), if and only if the following conditions are met:
\begin{enumerate}
\item Both $\alpha-\beta+\gamma-1$ and $\alpha+\beta-\gamma-1$ are nonnegative even integers.
\item The inequality $\alpha+\beta+\gamma-1>2(\alpha-1)$ is satisfied, which is equivalent to the condition $\beta+\gamma>\alpha-1$.
\end{enumerate}
\end{theorem}

\begin{theorem}\label{Main-th-3-2}
Let $\alpha,\beta$ and $\gamma$ be integers, each greater than or equal to 2. There exists a (reducible) spherical conical metric on $S^{2}_{\{\alpha,\beta,\gamma\}}$ such that the conical singularities of angles $2\pi\alpha,2\pi\beta$ are all saddle points of $\Phi$, as defined by equation (\ref{E-1}), if and only if the following conditions are met:
\begin{enumerate}
\item Both $\alpha+\beta-\gamma-1$ and $\alpha+\beta+\gamma-1$ are nonnegative even integers.
\item The inequality $\alpha+\beta+\gamma-1> 2\cdot \max\{\alpha-1,\beta-1\}$ is satisfied.
\end{enumerate}
\end{theorem}

\begin{theorem}\label{Main-th-3-3}
Let $\alpha,\beta$ and $\gamma$ be integers, each greater than or equal to 2. There exists a (reducible) spherical conical metric on $S^{2}_{\{\alpha,\beta,\gamma\}}$ such that all conical singularities are saddle points of $\Phi$, as defined by equation (\ref{E-1}), if and only if the following conditions are met:
 \begin{enumerate}
\item The number $\alpha+\beta+\gamma-1$ is an even integer.
\item The inequality $\alpha+\beta+\gamma-1>2\cdot \max\{\alpha-1,\beta-1,\gamma-1\}$ is satisfied.
\end{enumerate}
\end{theorem}

\begin{remark}
These results can be integrated into \textbf{Theorem \ref{Main-th-4}} by considering the specific condition where  $I+J+L=3$ as stated in \textbf{Theorem \ref{Main-th-4}}.
\end{remark}

In this section, we will utilize the following identity to illustrate the construction of reducible spherical conical metrics:
$$S^{2}_{\{\alpha_{1},\ldots,a_{N_{1}}\}}=S^{2}_{\{\beta_{1},\ldots,\beta_{N_{2}}\}}+S^{2}_{\{\gamma_{1},\gamma_{1}\}}$$
This identity shows that by attaching a football $S^{2}_{\{\gamma_{1},\gamma_{1}\}}$ to $S^{2}_{\{\beta_{1},\ldots,\beta_{N_{2}}\}}$, which possesses a reducible spherical conical metric, we can construct a reducible spherical conical metric on $S^{2}_{\{\alpha_{1},\ldots,a_{N_{1}}\}}$.\par

We will demonstrate the necessity of these theorems using \textbf{Theorem} \ref{Thm-1}, and establish their sufficiency with \textbf{Theorem} \ref{Main-th-1}.

\subsection{Proof of \textbf{Theorem} \ref{Main-th-3-0}}
~~\par

\textbf{Necessity}~ \par

Let ${\rm d}s^{2}$ be a reducible spherical conical metric on $S^{2}_{\{\alpha,\beta,\gamma\}}$ satisfied the condition. Let $\omega$ be a character 1-form of ${\rm d}s^{2}$, and  denote by $p_{1},p_{2}$ and $p_{3}$ the conical singularities of ${\rm d}s^{2}$ with corresponding angles $2\pi\alpha,2\pi\beta$ and $2\pi\gamma$, respectively. It follows that $p_{1}$ is the unique zero of $\omega$ with order $\alpha-1$ and $p_{2},p_{3}$ are simple poles of $\omega$. The residues at $p_{2}$ and $p_{3}$ lead us to consider the following two distinct cases. \par
\textbf{(A)} The product of the residues $Res_{p_{2}}(\omega)\cdot Res_{p_{3}}(\omega)$ is greater than zero.\par
\textbf{(B)} The product of the residues $Res_{p_{2}}(\omega)\cdot Res_{p_{3}}(\omega)$ is less than zero.\par
When case \textbf{(A)} arises, without loss of generality, we  may assume that $Res_{p_{2}}(\omega)=\beta$ , $Res_{p_{3}}(\omega)=\gamma$ (otherwise, consider $-\omega$ instead).  Suppose that $\omega$ has $p$ poles with residues $+1$ and $q$ poles with residues $-1$. By applying the Residue theorem and the Riemann-Roch theorem, we obtain
\begin{equation*}
\begin{cases}
\beta+\gamma+p-q=0,\\
(\alpha-1)-p-q-2=-2.
\end{cases}
\end{equation*}

Solving these equations yields
\begin{equation*}
\begin{cases}
p=\frac{\alpha-\beta-\gamma-1}{2},\\
q=\frac{\alpha+\beta+\gamma-1}{2}.
\end{cases}
\end{equation*}

Since $p$ and $q$ are nonnegative integers, $\alpha-\beta-\gamma-1$ and $ \alpha+\beta+\gamma-1$ must also be nonnegative even integers.\par

When case \textbf{(B)} arises, without loss of generality, we may assume that $Res_{p_{2}}(\omega)=\beta$, $Res_{p_{3}}(\omega)=-\gamma$. Suppose that $\omega$ has $p$ poles with residues $+1$ and $q$ poles with residues $-1$. By applying the Residue theorem and the Riemann-Roch theorem, we obtain
\begin{equation*}
\begin{cases}
\beta-\gamma+p-q=0,\\
(\alpha-1)-p-q-2=-2.
\end{cases}
\end{equation*}

Solving these equations yields
\begin{equation*}
\begin{cases}
p=\frac{\alpha-\beta+\gamma-1}{2},\\
q=\frac{\alpha+\beta-\gamma-1}{2}.
\end{cases}
\end{equation*}

Since $p$ and $q$ are nonnegative integers, $\alpha+\beta-\gamma-1$ and $\alpha-\beta+\gamma-1$  must also be nonnegative even integers.\par

\textbf{Sufficiency}~\par
We shall now apply mathematical induction to the integer $\alpha$ to verify the sufficiency of the conditions. \par

 When $\alpha=2$, if $\alpha-\beta-\gamma-1$ and $\alpha+\beta+\gamma-1$ are nonnegative even integers, and since $0<\beta,\gamma$, we deduce that $\beta+\gamma=1$. According to \textbf{Example 7.1}, we derive the following identity
$$S^{2}_{\{2,\beta,\gamma\}}=S^{2}_{\{\beta,\beta\}}+S^{2}_{\{\gamma,\gamma\}}.$$
Thus, there exists a reducible spherical conical metric on $S^{2}_{\{2,\beta,\gamma\}}$ satisfied the condition.\par

If $\alpha+\beta-\gamma-1$ and $\alpha-\beta+\gamma-1$ are nonnegative even integers, without loss of generality, we may suppose $\beta\leq\gamma$, then we deduce that $\gamma=\beta+1$. According to \textbf{Example 7.1}, we derive the following identity
$$S^{2}_{\{2,\beta,\gamma\}}=S^{2}_{\{2,\beta,\beta+1\}}=S^{2}_{\{\beta,\beta\}}+S^{2}_{\{1,1\}}.$$
Thus, there exists a reducible spherical conical metric on $S^{2}_{\{2,\beta,\gamma\}}$ satisfied the condition.\par

Suppose that when $\alpha=k\geq 2$, if either $\alpha-\beta-\gamma-1$ and $\alpha+\beta+\gamma-1$ or $\alpha+\beta-\gamma-1$ and $\alpha-\beta+\gamma-1$ are nonnegative even integers, there exists a reducible spherical conical metric on $S^{2}_{\{\alpha,\beta,\gamma\}}=S^{2}_{\{k,\beta,\gamma\}}$ satisfied the condition. Now, when $\alpha=k+1$ and either $\alpha-\widetilde{\beta}-\widetilde{\gamma}-1$ and $\alpha+\widetilde{\beta}+\widetilde{\gamma}-1$ or $\alpha+\widetilde{\beta}-\widetilde{\gamma}-1$ and $\alpha-\widetilde{\beta}+\widetilde{\gamma}-1$ are nonnegative even integers, we aim to demonstrate the existence of a reducible conical spherical metric on $S^{2}_{\{k+1,\widetilde{\beta},\widetilde{\gamma}\}}$ satisfied the condition.\par

\textbf{Case A:~} Both $\alpha-\widetilde{\beta}-\widetilde{\gamma}-1=k-\widetilde{\beta}-\widetilde{\gamma}$ and $ \alpha+\widetilde{\beta}+\widetilde{\gamma}-1=k+\widetilde{\beta}+\widetilde{\gamma}$ are nonnegative even integers.\par

If $0<\widetilde{\beta}<1$, according to our assumption, there exists a reducible spherical conical metric on $S^{2}_{\{k, 1 - \widetilde{\beta}, \widetilde{\gamma}\}}$, since $k-1+(1-\widetilde{\beta})-\widetilde{\gamma}=k-\widetilde{\beta}-\widetilde{\gamma}$ and $k-1-(1-\widetilde{\beta})+\widetilde{\gamma}=k+\widetilde{\beta}+\widetilde{\gamma}- 2$ are nonnegative even integers. Since
$$S^{2}_{\{k+1, \widetilde{\beta}, \widetilde{\gamma}\}}=S^{2}_{\{k, 1-\widetilde{\beta}, \widetilde{\gamma}\}}+S^{2}_{\{\widetilde{\beta}, \widetilde{\beta}\}},$$
we obtain a reducible spherical conical metric on $S^{2}_{\{k + 1, \widetilde{\beta}, \widetilde{\gamma}\}}$ satisfied the condition.\par

If $1<\widetilde{\beta}$, according to our assumption, there exists a reducible spherical conical metric on $S^{2}_{\{k, \widetilde{\beta}-1, \widetilde{\gamma}\}}$, since $k-1- (\widetilde{\beta}-1)-\widetilde{\gamma}=k-\widetilde{\beta}-\widetilde{\gamma}$ and $k-1+(\widetilde{\beta}-1)+\widetilde{\gamma}=k+\widetilde{\beta}+\widetilde{\gamma}-2$ are nonnegative even integers. Since
$$S^{2}_{\{k+1, \widetilde{\beta}, \widetilde{\gamma}\}}=S^{2}_{\{k, \widetilde{\beta}-1, \widetilde{\gamma}\}}+S^{2}_{\{1, 1\}},$$
we obtain a reducible spherical conical metric on $S^{2}_{\{k+1, \widetilde{\beta}, \widetilde{\gamma}\}}$ satisfied the condition.\par

 \textbf{Case B:}~Both $\alpha+\widetilde{\beta}-\widetilde{\gamma}-1=k+\widetilde{\beta}-\widetilde{\gamma}$ and $ \alpha-\widetilde{\beta}+\widetilde{\gamma}-1=k-\widetilde{\beta}+\widetilde{\gamma}$ are nonnegative even integers. Without of loss of generality, we may suppose $\widetilde{\beta}\leq \widetilde{\gamma}$.\par

 \textbf{Case B-1:} When $k+\widetilde{\beta}-\widetilde{\gamma}=0$, i.e.,
$\widetilde{\gamma}=\widetilde{\beta}+k>2$, according to our assumption, there exists a reducible spherical conical metric on $S^{2}_{\{k,\widetilde{\beta},\widetilde{\gamma}-1\}}=S^{2}_{\{k,\widetilde{\beta},\widetilde{\beta}+k-1\}}$, since $k-1+\widetilde{\beta}-(\widetilde{\gamma}-1)=k-1+\widetilde{\beta}-(\widetilde{\beta}+k-1)=0$ and $k-1-\widetilde{\beta}+(\widetilde{\gamma}-1)=k-1-\widetilde{\beta}+(\widetilde{\beta}+k-1)=2(k-1)$ are nonnegative even integers. Since
$$S^{2}_{\{k+1,\widetilde{\beta},\widetilde{\beta}+k\}}
=S^{2}_{\{k,\widetilde{\beta},\widetilde{\beta}+k-1\}}+S^{2}_{\{1,1\}},$$
we obtain a reducible spherical conical metric on $S^{2}_{\{k+1,\widetilde{\beta},\widetilde{\gamma}\}}$ satisfied the condition.\par

\textbf{Case B-2:}~When $k+\widetilde{\beta}-\widetilde{\gamma}\geq2$,
if $0<\widetilde{\beta}<1$, according to our assumption, there exists a reducible spherical conical metric on $S^{2}_{\{k,1-\widetilde{\beta},\widetilde{\gamma}\}}$, since $k-1-(1-\widetilde{\beta})-\widetilde{\gamma}=k+\widetilde{\beta}-\widetilde{\gamma}-2$ and $k-1+(1-\widetilde{\beta})+\widetilde{\gamma}=k-\widetilde{\beta}+\widetilde{\gamma}$ are nonnegative even integers. Since
$$S^{2}_{\{k+1,\widetilde{\beta},\widetilde{\gamma}\}}=S^{2}_{\{k,1-\widetilde{\beta},\widetilde{\gamma}\}}+S^{2}_{\{\widetilde{\beta},\widetilde{\beta}\}},$$
we obtain a reducible spherical conical metric on $S^{2}_{\{k+1,\widetilde{\beta},\widetilde{\gamma}\}}$ satisfied the condition. \par

If $1<\widetilde{\beta}$, according to our assumption, there exists a  reducible spherical conical metric on $S^{2}_{\{k,\widetilde{\beta}-1,\widetilde{\gamma}\}}$, since $k-1-(\widetilde{\beta}-1)+\widetilde{\gamma}=k-\widetilde{\beta}+\widetilde{\gamma}$ and $k-1+(\widetilde{\beta}-1)-\widetilde{\gamma}=k+\widetilde{\beta}-\widetilde{\gamma}-2$ are nonnegative even integers. Since
$$S^{2}_{\{k+1,\widetilde{\beta},\widetilde{\gamma}\}}=S^{2}_{\{k,\widetilde{\beta}-1,\widetilde{\gamma}\}}+S^{2}_{\{1,1\}},$$
there exists a reducible spherical conical metric on $S^{2}_{\{k+1,\widetilde{\beta},\widetilde{\gamma}\}}$ satisfied the condition.\par

\subsection{Proof of Theorem \ref{Main-th-3-1}}
~~\par
\textbf{Necessity}~\par
Let ${\rm d}s^{2}$ be a reducible spherical conical metric on $S^{2}_{\{\alpha,\beta,\gamma\}}$ satisfied the given condition. Let $\omega$ be a character 1-form of ${\rm d}s^{2}$, and denote by $p_{1},p_{2}$ and $p_{3}$ the conical singularities of ${\rm d}s^{2}$ with corresponding angles $2\pi\alpha,2\pi\beta$ and $2\pi\gamma$, respectively. Since $\Phi$ has a single saddle point at $p_{1}$, it follows that $p_{1}$ is the unique zero of $\omega$ with order $\alpha-1$ and $p_{2},p_{3}$ are simple poles of $\omega$. The residues at $p_{2}$ and $p_{3}$ lead us to consider the following two distinct cases. \par
\textbf{(A)} $Res_{p_{2}}(\omega)\cdot Res_{p_{3}}(\omega)>0$.\par
\textbf{(B)} $Res_{p_{2}}(\omega)\cdot Res_{p_{3}}(\omega)<0$.\par
When case \textbf{(A)} occurs, without loss of generality, we may assume that  $Res_{p_{2}}(\omega)=\beta, Res_{p_{3}}(\omega)=\gamma$. Assume that $\omega$ has $p$ poles with residue $+1$ and $q$ poles with residue $-1$. Then, by the Residue theorem and the Riemann-Roch theorem, we obtain
\begin{equation*}
\begin{cases}
\beta+\gamma+p-q=0,\\
(\alpha-1)-p-q-2=-2.
\end{cases}
\end{equation*}

Solving these equations yields
\begin{equation*}
\begin{cases}
2p=\alpha-\beta-\gamma-1,\\
2q=\alpha+\beta+\gamma-1.
\end{cases}
\end{equation*}

Since $p$ and $q$ are nonnegative integers, both $\alpha-\beta-\gamma-1$ and $\alpha+\beta+\gamma-1$ are nonnegative even integers. Particularly, we obtain $\beta+\gamma\leq \alpha-1$.\par

Now set $f(z)=\exp(\int^{z} \omega)$. Then $f$ is a non-trivial rational function on $S^{2}$ with ${\rm deg}(f)=\frac{\alpha+\beta+\gamma-1}{2}$ and ${\rm d}f=f\cdot\omega$.
Since $f$ looks like $z\mapsto z^{\alpha}$ near the zero of $\omega$, ${\rm deg}(f)=\frac{\alpha+\beta+\gamma-1}{2}\geq \alpha$. Thus, $\beta+\gamma\geq\alpha+1$. It is a contradiction to $\beta+\gamma\leq \alpha-1$! Thus case \textbf{(A)} can not occur.\par
When case \textbf{(B)} occurs, without loss of generality, assume that  $Res_{p_{2}}(\omega)=\beta, Res_{p_{3}}(\omega)=-\gamma$. Suppose $\omega$ has $p$ poles with residue $+1$ and $q$ poles with residue $-1$. Then, by the Residue theorem and the Riemann-Roch theorem, we obtain
\begin{equation*}
\begin{cases}
\beta-\gamma+p-q=0,\\
(\alpha-1)-p-q-2=-2.
\end{cases}
\end{equation*}

Solving these equations yields
\begin{equation*}
\begin{cases}
2p=\alpha-\beta+\gamma-1,\\
2q=\alpha+\beta-\gamma-1.
\end{cases}
\end{equation*}
Since $p$ and $q$ are nonnegative integers, both $\alpha+\beta-\gamma-1$ and $\alpha-\beta+\gamma-1$ are nonnegative even integers.\par

Set $f(z)=\exp(\int^{z} \omega)$. Then $f$ is a non-trivial rational function on $S^{2}$ with ${\rm deg}(f)=\frac{\alpha+\beta+\gamma-1}{2}$ and ${\rm d} f=f\cdot\omega$.
Since $f$ looks like $z\mapsto z^{\alpha}$ near the zero of $\omega$, ${\rm deg}(f)=\frac{\alpha+\beta+\gamma-1}{2}\geq \alpha$. Hence, $\alpha+\beta+\gamma-1>2(\alpha-1)$.

\textbf{Sufficiency}~ \par
We shall now apply mathematical induction to the integer $\alpha$ to verify the sufficiency of the condition. \par

When $\alpha=2$, if both $\alpha-\beta+\gamma-1=1-\beta+\gamma$ and $\alpha+\beta-\gamma-1=1+\beta-\gamma$ are nonnegative even integers, then $|\beta-\gamma|=1$.  Without loss of generality, we may suppose $\beta<\gamma$. Then $\gamma=\beta+1$. According to \textbf{Example 7.1}, we derive the following identity $$S^{2}_{\{2,\beta,\gamma\}}=S^{2}_{\{2,\beta,\beta+1\}}=S^{2}_{\{\beta,\beta\}}+S^{2}_{\{1,1\}}.$$
Thus there exits a reducible spherical conical metric on $S^{2}_{\{2,\beta,\beta+1\}}$ satisfied the condition.\par

Note that when $\beta=\gamma=2$, if both $\alpha-\beta+\gamma-1$ and $\alpha+\beta-\gamma-1$ are nonnegative even integers and $\alpha+\beta+\gamma-1>2(\alpha-1)$, then
$\alpha=3$. According to \textbf{Example 7.3}, there exits a reducible spherical conical metric on $S^{2}_{\{3,2,2\}}$ satisfied the condition.\par

Suppose that when $\alpha=k\geq 2$, if both $\alpha-\beta+\gamma-1$ and $\alpha+\beta-\gamma-1$ are nonnegative even integers and $\alpha+\beta+\gamma-1>2(\alpha-1)$, there exists a reducible spherical conical metric on $S^{2}_{\{k,\beta,\gamma\}}$ such that the singularity of angle $2\pi\alpha$ is the unique saddle point of $\Phi$, the singularity of angle $2\pi\beta$ is a minimum point of $\Phi$ and the singularity of angle $2\pi\gamma$ is a maximum point of $\Phi$.\par

When $\alpha=k+1$, suppose that $\alpha-\widetilde{\beta}+\widetilde{\gamma}-1$ and $\alpha+\widetilde{\beta}-\widetilde{\gamma}-1$ are nonnegative even integers and $\alpha+\widetilde{\beta}+\widetilde{\gamma}-1>2(\alpha-1)$. \par

Without loss of generality, we may suppose $\widetilde{\beta}\leq\widetilde{\gamma}$, and $3\leq \widetilde{\gamma}$ (Otherwise, we obtain $\widetilde{\beta}=\widetilde{\gamma}=2$ and $\alpha=3$, and we have noted that there exits a reducible spherical conical metric on $S^{2}_{\{3,2,2\}}$ satisfied the condition).
Since $\alpha+\widetilde{\beta}-\widetilde{\gamma}-1=k+\widetilde{\beta}-\widetilde{\gamma}$ and $\alpha-\widetilde{\beta}+\widetilde{\gamma}-1=k-\widetilde{\beta}+\widetilde{\gamma}$ are nonnegative even integers,
we obtain both
$k-\widetilde{\beta}+(\widetilde{\gamma}-1)-1=\alpha-\widetilde{\beta}+\widetilde{\gamma}-3$ and
$k+\widetilde{\beta}-(\widetilde{\gamma}-1)-1=\alpha+\widetilde{\beta}-\widetilde{\gamma}-1$ are nonnegative even integers.
Since $\alpha+\widetilde{\beta}+\widetilde{\gamma}-1>2(\alpha-1)$,
we deduce that $k+\widetilde{\beta}+(\widetilde{\gamma}-1)-1=k+\widetilde{\beta}+\widetilde{\gamma}-2>2(k-1)$.
Thus, according to the assumption, there exists a reducible spherical conical metric on
$S^{2}_{\{k,\widetilde{\beta},\widetilde{\gamma}-1\}}$ such that the singularity of angle $2\pi k$ is the unique saddle point of $\widetilde{\Phi}$, the singularity of angle $2\pi\widetilde{\beta}$ is a minimum point of $\widetilde{\Phi}$ and the singularity of angle $2\pi(\widetilde{\gamma}-1)$ is a maximum point of $\widetilde{\Phi}$. Since
$$S^{2}_{\{k+1,\widetilde{\beta},\widetilde{\gamma}\}}=S^{2}_{\{k,\widetilde{\beta},\widetilde{\gamma}-1\}}+S^{2}_{\{1,1\}},$$
there exists a reducible spherical conical metric on $S^{2}_{\{k+1,\widetilde{\beta},\widetilde{\gamma}\}}$ such that the singularity of angle $2\pi\alpha$ is the unique saddle point of $\widetilde{\Phi}$, the singularity of angle $2\pi\widetilde{\beta}$ is a minimum point of $\widetilde{\Phi}$ and the singularity of angle $2\pi\widetilde{\gamma}$ is a maximum point of $\widetilde{\Phi}$.

\subsection{Proof of Theorem \ref{Main-th-3-2}}
~~\par

\textbf{Necessity}~\par

Let ${\rm d}s^{2}$ be a reducible spherical conical metric on $S^{2}_{\{\alpha,\beta,\gamma\}}$ satisfied the condition. Let $\omega$ be a character 1-form of ${\rm d}s^{2}$, and denote by $p_{1},p_{2}$ and $p_{3}$ the conical singularities of ${\rm d}s^{2}$ with corresponding angles $2\pi\alpha,2\pi\beta$ and $2\pi\gamma$, respectively. Since $p_{1},p_{2}$ are saddle points of $\Phi$, it follows that $p_{1}$ and $p_{2}$ are zeros of $\omega$ with orders $\alpha-1$ and $\beta-1$, respectively. Moreover, $p_{3}$ is a simple pole of $\omega$. Without loss of generality, we may suppose $Res_{p_{3}}(\omega)=\gamma$ (otherwise, consider $-\omega$ instead). Suppose that $\omega$ has $p$ poles with residues $+1$ and $q$ poles with residues $-1$. By applying the Residue theorem and the Riemann-Roch theorem, we obtain
\begin{equation*}
\begin{cases}
q-p=\gamma,\\
p+q=\alpha+\beta-1.
\end{cases}
\end{equation*}

Solving these equations yields
\begin{equation*}
\begin{cases}
2p=\alpha+\beta-\gamma-1,\\
2q=\alpha+\beta+\gamma-1.
\end{cases}
\end{equation*}
Since $p$ and $q$ are nonnegative integers, both $\alpha+\beta-\gamma-1$ and $ \alpha+\beta+\gamma-1$ are nonnegative even integers.\par

Set $f(z)=\exp(\int^{z}\omega)$. Then $f$ is a non-trivial rational function on $S^{2}$ with ${\rm deg}(f)=\frac{\alpha+\beta+\gamma-1}{2}$ and ${\rm d}f=f\cdot\omega$.
Since $f$ looks like $z\mapsto z^{\alpha}$ near the zero of $\omega$ with order $\alpha-1$, we obtain ${\rm deg}(f)=\frac{\alpha+\beta+\gamma-1}{2}> \alpha-1$. That is $\alpha+\beta+\gamma-1>2(\alpha-1)$. Similarly, we obtain $\alpha+\beta+\gamma-1>2(\beta-1)$. Hence, we deduce that
$$\alpha+\beta+\gamma-1>2\cdot \max\{\alpha-1,\beta-1\}.$$

\textbf{Sufficiency}~ \par
We shall now apply mathematical induction to the integer $\alpha+\beta-2$ to verify the sufficiency of the condition. \par

Since $\alpha,\beta,\gamma\geq2$, then $\alpha+\beta-2\geq 2$. Since $\alpha+\beta+\gamma-1$ and $\alpha+\beta-\gamma-1 $ are nonnegative even integers, if $\alpha+\beta-2=2$, then $\alpha=\beta=2$, $\gamma=3$. By \textbf{Example 7.3}, there exists a reducible spherical conical metric on $S^{2}_{\{2,2,3\}}$
such that the singularities of angles $4\pi,4\pi$ are saddle points of $\Phi$, the singularity of angle $6\pi$ is a minimum point of $\Phi$ and the two saddle points lie on the same geodesic connected a minimum point and a maximum point of $\Phi$.\par

Suppose that when $\alpha+\beta-2=k\geq2$, if both $2p\doteq\alpha+\beta-\gamma-1$ and $2q\doteq\alpha+\beta+\gamma-1$ are nonnegative even integers and $q>\max\{\alpha-1,\beta-1\}$, there exists a reducible spherical conical metric on $S^{2}_{\{\alpha,\beta,\gamma\}}$ such that the  singularities of angles $2\pi\alpha,2\pi\beta$ are saddle points of $\Phi$, the singularity of angle $2\pi\gamma$ is a minimum point of $\Phi$ and the two saddle points are lie on the same geodesic connected the minimum point (i.e. the singularity of order $\gamma$) and a maximum point of $\Phi$.\par

When $\widetilde{\alpha}+\widetilde{\beta}-2=k+1$, suppose that both $2\widetilde{p}=\widetilde{\alpha}+\widetilde{\beta}-\widetilde{\gamma}-1$ and $2\widetilde{q}=\widetilde{\alpha}+\widetilde{\beta}+\widetilde{\gamma}-1$ are nonnegative even integers and $\widetilde{q}> \max\{\widetilde{\alpha}-1,\widetilde{\beta}-1\}$.\par

Without loss of generality, we may suppose $\widetilde{\alpha}\geq\widetilde{\beta}$. Then $\widetilde{\alpha}\geq 3$. By the assumption, one can easily check that there exists a reducible spherical conical metric on $S^{2}_{\{\widetilde{\alpha}-1,\widetilde{\beta},\widetilde{\gamma}-1\}}$ such that the two singularities corresponding the orders $\widetilde{\alpha}-1$ and $\widetilde{\beta}$ are two saddle points of $\widehat{\Phi}$ which lie on the same geodesic connected a minimum point (i.e. the third singularity) and a maximum point of $\widehat{\Phi}$. Since
$$S^{2}_{\{\widetilde{\alpha},\widetilde{\beta},\widetilde{\gamma}\}}=S^{2}_{\{\widetilde{\alpha}-1,\widetilde{\beta},\widetilde{\gamma}-1\}}+S^{2}_{\{1,1\}},$$
there exists a reducible spherical conical metric on $S^{2}_{\{\widetilde{\alpha},\widetilde{\beta},\widetilde{\gamma}\}}$ satisfied the condition.\par

\subsection{Proof of Theorem \ref{Main-th-3-3}}
~~\par

\textbf{Necessity}~\par

Let ${\rm d}s^{2}$ be a reducible spherical conical metric on $S^{2}_{\{\alpha,\beta,\gamma\}}$ satisfied the condition. Let $\omega$ be a character 1-form of ${\rm d}s^{2}$, and denote by $p_{1},p_{2}$ and $p_{3}$ the conical singularities of ${\rm d}s^{2}$ with corresponding angles $2\pi\alpha,2\pi\beta$ and $2\pi\gamma$, respectively. Since $p_{1},p_{2}$ and $p_{3}$ are saddle points of $\Phi$, it follows that $p_{1},p_{2}$ and $p_{3}$ are zeros of $\omega$ with orders $\alpha-1, \beta-1$ and $\gamma-1$, respectively. Suppose that $\omega$ has $p$ poles with residues $+1$ and $q$ poles with residues $-1$. By applying the Residue theorem and the Riemann-Roch theorem, we obtain
\begin{equation*}
\begin{cases}
q-p=0,\\
p+q=\alpha+\beta+\gamma-1.
\end{cases}
\end{equation*}

Solving these equations yields
$$2p=2q=\alpha+\beta+\gamma-1.$$

Since $p$ and $q$ are integers, we deduce that $\alpha+\beta+\gamma-1$ is an even integer.\par

Set $f(z)=\exp(\int^{z}\omega)$. Then $f$ is a non-trivial rational function on $S^{2}$ with ${\rm deg}(f)=\frac{\alpha+\beta+\gamma-1}{2}$ and ${\rm d}f=f\cdot\omega$.
Since $f$ looks like $z\mapsto z^{\alpha}$ near the zero of $\omega$ with order $\alpha-1$, we obtain ${\rm deg}(f)=\frac{\alpha+\beta+\gamma-1}{2}> \alpha-1$. That is $\alpha+\beta+\gamma-1>2(\alpha-1)$. Similarly, we obtain $\alpha+\beta+\gamma-1>2(\beta-1)$ and $\alpha+\beta+\gamma-1>2(\gamma-1)$. Hence, we deduce that
$$\alpha+\beta+\gamma-1>2\cdot \max\{\alpha-1,\beta-1,\gamma-1\}.$$

\textbf{Sufficiency}~\par
We shall now apply mathematical induction to the integer $\alpha+\beta+\gamma-1$ to verify the sufficiency of the conditions. \par

Firstly, since $\alpha,\beta,\gamma\geq2$ and $\alpha+\beta+\gamma-1>2\cdot \max\{\alpha-1,\beta-1,\gamma-1\}$, we deduce that $\alpha+\beta+\gamma-1\geq6$. When $\alpha+\beta+\gamma-1=6$, we obtain $(\alpha,\beta,\gamma)=(2,2,3),(2,3,2)$ or $(3,2,2)$. As demonstrated in \textbf{Example 7.3}, there exits a reducible spherical conical metric exists on $S^{2}_{\{2,2,3\}}$ satisfied the condition. Particularly, there exists such a metric such that the three singularities lie on the same geodesic connected a minimum point and a maximum point of $\Phi$. \par

Secondly, we suppose that when $\alpha+\beta+\gamma-1=2l,l\geq3$ and $l>\max\{\alpha-1,\beta-1,\gamma-1\}$, there exists a  reducible spherical conical metric on $S^{2}_{\{\alpha,\beta,\gamma\}}$  satisfied the condition and all singularities lie on the same geodesic connected a minimum point and a maximum point of $\Phi$. \par

When $\widetilde{\alpha}+\widetilde{\beta}+\widetilde{\gamma}-1=2(l+1)$ and $l+1>\max\{\widetilde{\alpha}-1,\widetilde{\beta}-1,\widetilde{\gamma}-1\}$,
without loss of generality, we may suppose $\widetilde{\alpha}\geq\widetilde{\beta} \geq\widetilde{\gamma}$. Then $\widetilde{\alpha}\geq\widetilde{\beta}\geq 3$. Otherwise, if $\widetilde{\beta}=\widetilde{\gamma}=2$, we deduce that $\widetilde{\alpha}-1=2(l-1)>l$ for $l\geq3$. It is a contradiction! According the assumption, one can easily prove that there exists a reducible spherical cone metric
on $S^{2}_{\{\widetilde{\alpha}-1,\widetilde{\beta}-1,\widetilde{\gamma}\}}$ such that all singularities are saddle points and all singularities lie on the same geodesic connected a minimum point and a maximum point of $\widehat{\Phi}$.\par
Since
$$S^{2}_{\{\widetilde{\alpha},\widetilde{\beta},\widetilde{\gamma}\}}=
S^{2}_{\{\widetilde{\alpha}-1,\widetilde{\beta}-1,\widetilde{\gamma}\}}+S^{2}_{\{1,1\}},$$
there exists a reducible spherical conical metric on  $S^{2}_{\{\widetilde{\alpha},\widetilde{\beta},\widetilde{\gamma}\}}$ satisfied the condition.

\section{Proof of Theorem \ref{Main-th-4}}
In this section, we will give the proof of \textbf{Theorem} \ref{Main-th-4}. We observe that establishing the necessity is straightforward, employing \textbf{Theorem} \ref{Thm-1} as our basis. In contrast, demonstrating sufficiency presents a more formidable challenge. To address this, we shall construct a specialized instance of reducible spherical conical metrics, leveraging the principles established in \textbf{Theorem} \ref{Main-th-1} to substantiate the sufficiency of the conditions posited by our theorem.\par

\subsection{Proof of the Necessity of Theorem \ref{Main-th-4}}~\par

Let ${\rm d}s^{2}$ be a reducible spherical conical metric on $S^{2}_{\{\alpha_{1},\ldots,\alpha_{I},\beta_{1},\ldots,\beta_{J},\gamma_{1},\ldots,\gamma_{L}\}}$ satisfied the conditions in \textbf{Theorem} \ref{Main-th-4}. Let $\omega$ be a character 1-form of ${\rm d}s^{2}$, and  $p_{1},\ldots,p_{I+J+L}$ be conical singularities of ${\rm d}s^{2}$ with corresponding angles $2\pi\alpha_{1},\ldots,2\pi\alpha_{I}, 2\pi\beta_{1},\ldots,2\pi\beta_{J} $ and $2\pi\gamma_{1},\ldots,2\pi\gamma_{L}$, respectively. Since $p_{1},\ldots,p_{I}$ are saddle points of $\Phi$, it follows that $p_{1},\ldots,p_{I}$ are zeros of $\omega$ with orders $\alpha_{1}-1,\ldots,\alpha_{I}-1$, respectively. Since $p_{I+1},\ldots,p_{I+J}$ are minimal points of $\Phi$, $p_{I+1},\ldots,p_{I+J}$ are simple poles of $\omega$ with residues $\beta_{1},\ldots,\beta_{J}$, respectively. Similarly, $p_{I+J+1},\ldots,p_{I+J+L}$ are simple poles of $\omega$ with residues $-\gamma_{1},\ldots,-\gamma_{J}$, respectively. Suppose that $\omega$ has $p$ poles with residues $+1$ and $q$ poles with residues $-1$. Then, the associated residue vector of $\omega$ is
$$\overrightarrow{r}=\{\beta_{1},\ldots,\beta_{J},-\gamma_{1},\ldots,-\gamma_{L},\underbrace{1,\ldots,1}_{p~times},\underbrace{-1,\ldots,-1}_{q~times}\}.$$

 By applying the Residue theorem and the Riemann-Roch theorem, we obtain
\begin{equation*}
\begin{cases}
q-p=\sum\limits_{j=1}^{J}\beta_{j}-\sum\limits_{l=1}^{L}\gamma_{l},\\
p+q=\sum\limits_{i=1}^{I}(\alpha_{i}-1)-(J+L)+2.
\end{cases}
\end{equation*}

We use a proof by contradiction to prove ${\rm deg}(\overrightarrow{r})>\max\{\alpha_{i}-1\}_{1\leq i\leq I}$.\par

Suppose
$${\rm deg} (\overrightarrow{r})\leq \max\{\alpha_{i}-1\}_{1\leq i\leq I},$$
then, ${\rm deg} (\overrightarrow{r})$ is finite, and for each $i$, ${\rm deg} (\overrightarrow{r})\leq \alpha_{i}-1$.

Since ${\rm deg} (\overrightarrow{r})$ is finite, for some real number $c\neq0$, all residues of $c\omega$ are integers. Setting  $f(z)=\exp(c\int^{z}\omega)$ for each $z\in \overline{\mathbb{C}}$, then $f$ is a rational function on $S^{2}$ and ${\rm d}f=cf\cdot\omega$. Moreover, ${\rm deg} (f)={\rm deg} (\overrightarrow{r})$. But $f$ looks like $z\mapsto z^{\alpha_{i}}$ near $p_{i}$ since $p_{i}$ is a zero of $\omega$ with order $\alpha_{i}-1$.  Hence, ${\rm deg} (\overrightarrow{r})\geq\alpha_{i}$. It is a contradiction!\par

\subsection{Proof of the sufficiency of Theorem \ref{Main-th-4}}~\par

\textbf{Convention}: In this section, we shall denote the set of the following condition by symbol $\mathcal{D}$.\par

There exist two nonnegative integers $p$ and $q$ satisfied
\begin{equation*}
\begin{cases}
p+q=\sum\limits_{i=1}^{I}(\alpha_{i}-1)-(J+L)+2,\\
q-p=\sum\limits_{j=1}^{J}\beta_{j}-\sum\limits_{l=1}^{L}\gamma_{l},
\end{cases}
\end{equation*}
and
$${\rm deg}(\overrightarrow{r})>\max\{\alpha_{i}-1\}_{1\leq i\leq I},$$
where
$$\overrightarrow{r}=\{\beta_{1},\ldots,\beta_{J},-\gamma_{1},\ldots,-\gamma_{L},\overbrace{1,\ldots,1}^{p\geq0},\overbrace{-1,\ldots,-1}^{q\geq0}\}.$$

On $S^{2}_{\{\alpha_{1},\ldots,\alpha_{I},\beta_{1},\ldots,\beta_{J},\gamma_{1},\ldots,\gamma_{L}\}}$, we will construct a specialized reducible spherical conical metric to prove the sufficiency of \textbf{Theorem \ref{Main-th-4}}. This construction is carefully divided into two distinct scenarios:\par

$$\text{\textbf{Case A:} $I\geq4$ and $J=L=0$.}$$
$$\text{\textbf{Case B:} $I+J+L\geq4,I\geq1$ and $J+L\geq1$.}$$

\subsubsection{\textbf{Case A:} $I\geq4$ and $J=L=0$}~\par

For this scenario, the sufficiency of \textbf{Theorem \ref{Main-th-4}} is derived from the following claims.\par

\textbf{Claim 1} Let $\alpha$ be an integer with $\alpha\geq2$. If $I+\alpha$ is an even integer and $I+2>\alpha$, there exists a reducible spherical conical metric on $S^{2}_{\{\alpha,\underbrace{2,2,\ldots,2}_{I-1}\}}$ such that all singularities are saddle points of $\Phi$ and all singularities lie on the same geodesic connected a minimum point and a maximum point of $\Phi$.

\begin{proof}
We will apply mathematical induction to the integer $\alpha$ to demonstrate this claim.\par

When $\alpha=2$, it follows that $I$ is an even integer denoted by $I=2k,k\geq2$. Since
$$S^{2}_{\{\underbrace{2,2,\ldots,2}_{2k}\}}=S^{2}_{\{\underbrace{2,2,\ldots,2}_{2k-2}\}}+S^{2}_{\{1,1\}}=\ldots=\underbrace{S^{2}_{\{1,1\}}+\ldots+S^{2}_{\{1,1\}}}_{k+1},$$
we obtain a reducible spherical conical metric on $S^{2}_{\{\underbrace{2,2,\ldots,2}_{2k}\}}$ such that all singularities are saddle points of $\Phi$ and all singularities lie on the same geodesic connected a minimum point and a maximum point of $\Phi$.\par
Suppose that when $\alpha=k\geq2$, if $I+k$ is an even integer and $I+2>k$, there exists a reducible spherical conical metric on $S^{2}_{\{k,\underbrace{2,2,\ldots,2}_{I-1}\}}$ such that all singularities are saddle points of $\Phi$ and all singularities lie on the same geodesic connected a minimum point and a maximum point of $\Phi$.\par

When $\alpha=k+1$, $\widehat{I}+k+1$ is an even integer and $\widehat{I}+2>k+1$, it follows that $(\widehat{I}-1)+k$ is an even integer and $(\widehat{I}-1)+2>k$. By the assumption, it follows that there exists a reducible spherical conical metric on $S^{2}_{\{k,\underbrace{2,2,\ldots,2}_{\widehat{I}-2}\}}$ such that all singularities are saddle points of $\widehat{\Phi}$ and all singularities lie on the same geodesic connected a minimum point and a maximum point of $\widehat{\Phi}$.\par

Since
$$S^{2}_{\{k+1,\underbrace{2,2,\ldots,2}_{\widehat{I}-1}\}}=S^{2}_{\{k,\underbrace{2,2,\ldots,2}_{\widehat{I}-2}\}}+S^{2}_{\{1,1\}}$$
we obtain a  reducible spherical conical metric on $S^{2}_{\{k+1,\underbrace{2,2,\ldots,2}_{\widehat{I}-1}\}}$ such that all singularities are saddle points of $\widetilde{\Phi}$ and all singularities lie on the same geodesic connected a minimum point and a maximum point of $\widetilde{\Phi}$.
\end{proof}

\textbf{Claim 2} Let $I_{1}$ and $I_{2}$ be non-negative integers with $I_{1}+I_{2}\geq4$. If $I_{2}$ is an even integer, there exists a reducible spherical conical metric on $S^{2}_{\{\underbrace{3,\ldots,3}_{I_{1}},\underbrace{2,\ldots,2}_{I_{2}}\}}$ such that all singularities are saddle points of $\Phi$ and all singularities lie on the same geodesic connected a minimum point and a maximum point of $\Phi$.

\begin{proof}
 When $I_{1}=0$, since $I_{2}$ is an even integer, by \textbf{Claim 1}, there exists a reducible spherical conical metric on $S^{2}_{\{\underbrace{2,\ldots,2}_{I_{2}}\}}$ such that all singularities are saddle points of $\Phi$ and all singularities lie on the same geodesic connected a minimum point and a maximum point of $\Phi$.\par

 When $I_{1}=1$, since $I_{2}$ is an even integer, it follows that $I_{2}+4$ is an even integer. By \textbf{Claim 1}, there exists a reducible spherical conical metric on $S^{2}_{\{3,\underbrace{2,\ldots,2}_{I_{2}}\}}$ such that all singularities are saddle points of $\Phi$ and all singularities lie on the same geodesic connected a minimum point and a maximum point of $\Phi$.\par

Suppose that when $I_{1}=k\geq1$ and for any non-negative even integers $I_{2}$ with $I_{1}+I_{2}\geq4$, there exists a reducible spherical conical metric on $S^{2}_{\{\underbrace{3,\ldots,3}_{k},\underbrace{2,\ldots,2}_{I_{2}}\}}$ such that all singularities are saddle points of $\Phi$ and all singularities lie on the same geodesic connected a minimum point and a maximum point of $\Phi$.\par

When $I_{1}=k+1$, and $\widehat{I}_{2}$ is a non-negative even integers with $I_{1}+\widehat{I}_{2}\geq4$, it follows that $\widehat{I}_{2}+2$ is an even integer.
 By the assumption, there exists a reducible spherical conical metric on $S^{2}_{\{\underbrace{3,\ldots,3}_{k-1},\underbrace{2,\ldots,2}_{\widehat{I}_{2}+2}\}}$ such that all singularities are saddle points of $\widehat{\Phi}$ and all singularities lie on the same geodesic connected a minimum point and a maximum point of $\widehat{\Phi}$.\par

Since
$$S^{2}_{\{\underbrace{3,\ldots,3}_{k+1},\underbrace{2,\ldots,2}_{\widehat{I}_{2}}\}}=S^{2}_{\{\underbrace{3,\ldots,3}_{k-1},\underbrace{2,\ldots,2}_{\widehat{I}_{2}+2}\}}+S^{2}_{\{1,1\}},$$
we obtain a reducible spherical conical metric on $S^{2}_{\{\underbrace{3,\ldots,3}_{k+1},\underbrace{2,\ldots,2}_{\widehat{I}_{2}}\}}$ such that all singularities are saddle points of $\widetilde{\Phi}$ and all singularities lie on the same geodesic connected a minimum point and a maximum point of $\widetilde{\Phi}$.\par

\end{proof}

\textbf{Claim 3} Let $\alpha_{1},\ldots,\alpha_{I}$ be $I\geq4$ integers with $\alpha_{i}\geq2,\forall i$ and $\sum\limits_{i=1}^{I}(\alpha_{i}-1)\geq I+2$ . Under the condition $\mathcal{D}$, there exists a reducible spherical conical metric on $S^{2}_{\{\alpha_{1},\ldots,\alpha_{I}\}}$ such that all singularities are saddle points of $\Phi$ and all singularities lie on the same geodesic connected a minimum point and a maximum point of $\Phi$.\par

\begin{proof}
By condition $\mathcal{D}$, it follows that $\sum\limits_{i=1}^{I}(\alpha_{i}-1)$ is an even integer. \par

When $\sum\limits_{i=1}^{I}(\alpha_{i}-1)=I+2$, suppose $\alpha_{1}\geq\ldots \geq\alpha_{I}\geq 2$, then it follows that $\alpha_{1}=\alpha_{2}=3,\alpha_{3}=\ldots=\alpha_{I}=2$ or $\alpha_{1}=4,\alpha_{2}=\alpha_{3}=\ldots=\alpha_{I}=2$ and $I$ is an even integer. According to the \textbf{Claim 1} (or \textbf{Claim 2}), we know that
there exists a reducible spherical conical metric on $S^{2}_{\{4,\underbrace{2,\ldots,2}_{2k-1}\}}$ ( or $S^{2}_{\{3,3,\underbrace{2,\ldots,2}_{2k-2}\}}$) such that all singularities are saddle points of $\Phi$ and all singularities lie on the same geodesic connected a minimum point and a maximum point of $\Phi$.\par

Suppose that when $\sum\limits_{i=1}^{I}(\alpha_{i}-1)=2l,l\geq\frac{I+2}{2}$ and $l+1>\max\{\alpha_{1}-1,\ldots,\alpha_{I}-1\}$, there exists a reducible spherical conical metric on $S^{2}_{\{\alpha_{1},\alpha_{2},\ldots,\alpha_{I}\}}$  such that all singularities are saddle points of $\Phi$ and all singularities lie on the same geodesic connected a minimum point and a maximum point of $\Phi$.\par

Assume that $\sum\limits_{i=1}^{I}(\alpha_{i}-1)=2(l+1)$ and $l+2>\max\{\alpha_{1}-1,\ldots,\alpha_{I}-1\}$. Without loss of generality, we may suppose $\alpha_{1}\geq\ldots \geq\alpha_{I}\geq 2$. Hence, $\alpha_{1},\alpha_{2}\geq3$. \par

Since $\alpha_{1}\geq\alpha_{2}\geq\alpha_{3}\geq\ldots\geq\alpha_{I}\geq2$ and $2(l+1)=\sum\limits_{i=1}^{I}(\alpha_{i}-1)>3(\alpha_{3}-1)$, it follows that $l+1>\alpha_{3}-1$. Since $l+2>\max\{\alpha_{1}-1,\ldots,\alpha_{I}-1\}$, it follows that $ l+1>\alpha_{1}-2\geq\alpha_{2}-2$. Consequently,
 $$l+1>\max\{\alpha_{1}-2,\alpha_{2}-2,\alpha_{3}-1,\ldots,\alpha_{I}-1\}.$$

According to the assumption, there exists a reducible spherical conical metric
on $S^{2}_{\{\alpha_{1}-1,\alpha_{2}-1,\alpha_{3},\ldots,\alpha_{I}\}}$ such that all singularities are saddle points of $\widehat{\Phi}$ and all singularities lie on the same geodesic connected a minimum point and a maximum point of $\widehat{\Phi}$.\par

Since
$$S^{2}_{\{\alpha_{1},\alpha_{2},\ldots,\alpha_{I}\}}=
S^{2}_{\{\alpha_{1}-1,\alpha_{2}-1,\alpha_{3},\ldots,\alpha_{I}\}}+S^{2}_{\{1,1\}},$$
we obtain a reducible spherical conical metric on  $S^{2}_{\{\alpha_{1},\alpha_{2},\ldots,\alpha_{I}\}}$ such that all singularities are saddle points of $\widetilde{\Phi}$ and all singularities lie on the same geodesic connected a minimum point and a maximum point of $\widetilde{\Phi}$.\par

\end{proof}

\subsubsection{\textbf{Two Lemmas}}~\par

In this subsection, we shall demonstrate two critical lemmas that serve as foundational elements in our demonstration of the rest case.

\begin{lemma}\label{Lemma-A-1}
Let $m,p$ and $q$ be positive integers with $p+q=m+2$. Let $\overrightarrow{r}=\{a_{1},\ldots,a_{p},-b_{1},\ldots,-b_{q}\}$ be a real residue vector with $0<a_{1}\leq a_{2}\leq\ldots\leq a_{p}$, $0<b_{1}\leq b_{2}\leq\ldots\leq b_{q}$, and ${\rm deg}(\overrightarrow{r})>m$. The following assertions hold.
\begin{enumerate}
 \item If $p<q$, there exist indices $i_{0} \in \{1,2,\ldots,p\}$ and $j_{0}\in\{1, 2\}$ such that for all $i\geq i_{0}$, $a_{i}>b_{j_{0}}$, and the modified residue vector $$\overrightarrow{r}_{j_{0}i}=\{a_{1},\ldots, (a_{i}-b_{j_{0}}),\ldots,a_{p},-b_{1}, \widehat{-b_{j_{0}}}, \ldots, -b_{q}\}$$
      satisfies ${\rm deg}(\overrightarrow{r}_{j_{0}i})>m-1$, where $\widehat{-b_{j_{0}}}$ denotes the exclusion of the term $-b_{j_{0}}$ from the vector.
 \item If $p=q$ and $a_{1}\leq b_{1}$, there exists an index $i_{0}\in \{2,\ldots,p\}$ such that for all $i\geq i_{0}$, $a_{i}>b_{1}$, and the modified residue vector
 $$\overrightarrow{r}_{1i}=\{a_{1}, \ldots, (a_{i}-b_{1}), \ldots, a_{p},-b_{2},\ldots,-b_{q}\}$$
 satisfies ${\rm deg}(\overrightarrow{r}_{1i})>m-1$.
\end{enumerate}
\end{lemma}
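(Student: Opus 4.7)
The plan is to reduce first to the rational primitive case, establish the existence of $i_0$ via an averaging argument, and then verify the degree inequality through a gcd analysis that exploits the flexibility $j_0\in\{1,2\}$ in case (1). For the irrational case, since $a_i-b_{j_0}$ is a $\mathbb{Q}$-linear combination of the retained components (via $\sum a_k=\sum b_l$), the $\mathbb{Q}$-span of the components of $\overrightarrow{r}_{j_0 i}$ equals that of $\overrightarrow{r}$ minus at most one dimension; a dimension count then forces either ${\rm deg}(\overrightarrow{r}_{j_0 i})=+\infty$ or a rational collapse whose degree can be bounded directly. So I may assume $\overrightarrow{r}$ is rational and, after rescaling, primitive: the $a_k,b_l$ are positive integers with $\gcd(a_1,\ldots,a_p,b_1,\ldots,b_q)=1$ and $A:=\sum a_k=\sum b_l\geq m+1$.

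Existence of $i_0$ would then follow from the averaging chain $p\,a_p\geq A\geq q\,b_1$: in case (1), $p<q$ gives $a_p>b_1$, so $i_1:=\min\{i:a_i>b_1\}\in\{1,\ldots,p\}$; in case (2), $p=q$ together with $a_1\leq b_1$ rules out $a_p\leq b_1$, since otherwise all $a_k=b_l=b_1$ and primitivity forces $b_1=1$ and $A=p$, contradicting $A>m=2p-2$ for $p\geq 2$. Thus $i_1\in\{2,\ldots,p\}$ in case (2), as required by the statement.

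For the degree inequality, I would write ${\rm deg}(\overrightarrow{r}_{j_0 i})=(A-b_{j_0})/d$, where $d$ denotes the gcd of the modified components. Since $d$ divides $a_i-b_{j_0}$, every $a_k$ for $k\neq i$, and every $b_l$ for $l\neq j_0$, one has $d\mid b_{j_0}\Leftrightarrow d\mid a_i$, and either equivalence combined with primitivity of $\overrightarrow{r}$ forces $d=1$; in particular $d=1$ whenever $b_{j_0}=1$, yielding ${\rm deg}(\overrightarrow{r}_{j_0 i})=A-1\geq m$. For $b_1\geq 2$, I would exploit $A\geq q\,b_1$ together with $q\geq p+1$ (case (1)) or $q=p\geq 2$ (case (2)) to control $A-b_{j_0}$ against $m\,d$; in case (1), the residual subcase where some $i\geq i_1$ produces $d\geq 2$ under $j_0=1$ is resolved by switching to $j_0=2$, because any prime inflating the modified gcd must by primitivity also divide $b_2$, so removing $b_2$ in place of $b_1$ collapses the new gcd to $1$ and gives ${\rm deg}(\overrightarrow{r}_{2 i})=A-b_2\geq m$.

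The hardest part will be this final gcd tracking: one must show that the finitely many primes potentially obstructing the bound under $j_0=1$ are all killed by the switch to $j_0=2$, and that enlarging $i_0$ to $\min\{i:a_i>b_2\}$ gives a single threshold that works uniformly for every $i\geq i_0$. This requires a careful application of primitivity to pin down which primes can divide the modified gcd and to ensure the two choices of $j_0$ cannot be simultaneously bad across the range $i\in\{i_0,\ldots,p\}$.
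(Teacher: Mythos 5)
Your overall skeleton (reduce to a primitive rational vector, get $i_{0}$ by averaging, then track the gcd of the modified vector) points in the same direction as the paper, but the degree estimate---which is the entire content of the lemma---rests on claims that are false. The pivotal assertion ``$d=1$ whenever $b_{j_{0}}=1$'' does not follow from $d\mid b_{j_{0}}\Leftrightarrow d\mid a_{i}$, because nothing forces $d\mid b_{j_{0}}$; and it is simply not true. Take $p=2$, $q=3$, $m=3$ and $\overrightarrow{r}=\{3,4,-1,-3,-3\}$, which is primitive with ${\rm deg}(\overrightarrow{r})=7>3$. Your recipe (since $b_{1}=1$, take $j_{0}=1$ and $i_{0}=\min\{i:a_{i}>b_{1}\}=1$) requires ${\rm deg}(\overrightarrow{r}_{1i})>2$ for every $i$, but for $i=2$ the modified vector is $\{3,3,-3,-3\}$, with gcd $d=3$ and degree $2=m-1$: the conclusion fails for that choice, so this is not a repairable justification gap but a wrong branch---the only valid choice here is $j_{0}=2$ with $i_{0}=2$. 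Your fallback for the residual subcase is also unsound on both counts: for $\{5,6,-2,-3,-6\}$ (primitive, $p=2$, $q=3$) and $i=1$ the gcd is $3$ for $j_{0}=1$ and $2$ for $j_{0}=2$, so switching to $j_{0}=2$ does not ``collapse the new gcd to $1$''; and $A-b_{2}\geq m$ is asserted without proof, while ${\rm deg}(\overrightarrow{r})>m$ only gives $A\geq m+1$, which is weaker than $A\geq m+b_{2}$ once $b_{2}\geq 2$.

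What your plan is missing is the mechanism that drives the paper's proof: a case comparison of $a_{1}$ with $b_{1}$ and $b_{2}$ under which the modified gcd $d$ always divides a \emph{retained small} component. Either that component equals the removed one in value ($a_{1}=b_{1}$, or $b_{1}=b_{2}$ with $j_{0}=1$), in which case $d\mid b_{1}\Rightarrow d\mid a_{i}\Rightarrow d$ divides every original component and primitivity gives $d=1$; or $d$ divides an element strictly smaller than every surviving $b_{j}$ (e.g.\ $a_{1}$, or $b_{1}$ after removing $b_{2}$), so each rescaled $b_{j}/d$ is at least $2$ and the degree is at least $2(q-1)>m$, respectively $1+2(q-2)>m-1$, using $q>p$ (or $q=p$ in case (2)). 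Your proposal never isolates such a retained divisor, which is exactly why the ``gcd tracking'' you flag as the hardest step remains unresolved. Finally, the irrational case is not the dichotomy you state: the modified vector can indeed become rational when $\overrightarrow{r}$ is irrational, and its degree is not ``bounded directly'' by a dimension count, so that reduction also needs an actual argument.
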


\begin{proof}
We focus on demonstrating the proof for the case where the degree of the residue vector $\overrightarrow{r}$ is finite, denoted as ${\rm deg} (\overrightarrow{r})<+\infty$. The analogous case for ${\rm deg} (\overrightarrow{r})=+\infty$ can be addressed with a similar approach.\par

When ${\rm deg} (\overrightarrow{r})<+\infty$, without loss of generality, we may suppose
$$\overrightarrow{r}=\{a_{1},\ldots,a_{p},-b_{1},\ldots,-b_{q}\}$$
is primitive. \par

(1)  Since $\overrightarrow{r}$ is a residue vector, it follows that $\sum\limits_{i=1}^{p}a_{i}=\sum\limits_{j=1}^{q}b_{j}$. Since $0<a_{1}\leq a_{2}\leq\ldots\leq a_{p}$, $0<b_{1}\leq b_{2}\leq\ldots\leq b_{q}$ and  $p<q$, it is obvious that there exist indices $i_{0} \in \{1,2,\ldots,p\}$ and $j_{0}\in\{1, 2\}$ such that $\forall i\geq i_{0},a_{i}>b_{j_{0}}$. \par

If $a_{1}=b_{1}$, define a residue vector by
$$\overrightarrow{r}_{1i}=\{a_{1},\ldots,(a_{i}-b_{1}),\ldots, a_{p},-b_{2},\ldots,-b_{q}\}.$$
It is easy to prove that $\overrightarrow{r}_{1i}$ is primitive, and ${\rm deg}(\overrightarrow{r}_{1i})=\sum\limits_{j=2}^{q}b_{j}={\rm deg}(\overrightarrow{r})-b_{1}$. If $b_{1}=1$, according to ${\rm deg}(\overrightarrow{r})>m$, ${\rm deg}(\overrightarrow{r}_{1i})>m-1$. If $b_{1}\geq2$,
${\rm deg}(\overrightarrow{r}_{1i})=\sum\limits_{j=2}^{q}b_{j}\geq 2(q-1)=2q-2> p+q-2=m>m-1$.\par

If $a_{1}<b_{1}$, then $a_{1}< b_{2}\leq\ldots \leq b_{q}$. Denote the greatest common divisor of $a_{1},\ldots,(a_{i}-b_{1}),\ldots, a_{p},b_{2},\ldots,b_{q}$ by $c$. It follows that $\frac{b_{j}}{c}\geq2, \forall j\geq2$. Obviously, the residue vector defined by
$$\overrightarrow{r}_{1i}=\{a_{1},\ldots,(a_{i}-b_{1}),\ldots, a_{p},-b_{2},\ldots,-b_{q}\}$$
satisfies
$${\rm deg} (\overrightarrow{r}_{1i})=\sum_{j=2}^{q}\frac{b_{j}}{c}\geq 2(q-1)=2q-2>p+q-2=m>m-1.$$

If $a_{1}>b_{1}$, we consider the relations among  $a_{1},b_{1},b_{2}$. \par

Similar as above, if $b_{1}=b_{2}$, the residue vector defined by
$\overrightarrow{r}_{1i}=\{a_{1},\ldots, \\ (a_{i}-b_{1}),\ldots, a_{p},-b_{2},\ldots,-b_{q}\}$ satisfies ${\rm deg} (\overrightarrow{r}_{1i})>m-1$; if $b_{1}<b_{2}\leq a_{1}$, the residue vector defined by
$\overrightarrow{r}_{2i}=\{a_{1},\ldots,(a_{i}-b_{2}),\ldots, a_{p},-b_{1},-b_{3},\ldots,-b_{q}\}$ satisfies
${\rm deg}(\overrightarrow{r}_{1i})>m-1$; if $b_{1}<a_{1}<b_{2}$, the residue vector defined by $\overrightarrow{r}_{1i}=\{a_{1},\ldots,(a_{i}-b_{1}),\ldots a_{p},-b_{2},\ldots,-b_{q}\}$ satisfies ${\rm deg}(\overrightarrow{r}_{1i})>m-1$.\par

(2) Since $p+q=m+2$ and $p=q$, $p=q=\frac{m+2}{2}\geq2$.  Since $\sum\limits_{i=1}^{p}a_{i}=\sum\limits_{j=1}^{q}b_{j}$, there exists $1\leq i_{0}\leq p$ such that $\forall i\geq i_{0},a_{i}>b_{1}$. Otherwise, $a_{1}=\ldots=a_{p}=b_{1}=\ldots=b_{q}$, then ${\rm deg} (\overrightarrow{r})=\frac{m+2}{2}\leq m$. It is a contradiction!\par

Obviously, the residue vector defined by
$$\overrightarrow{r}_{1i}=\{a_{1},\ldots,(a_{i}-b_{1}),\ldots, a_{p},-b_{2},\ldots,-b_{q}\}, i\geq i_{0} $$
satisfies ${\rm deg}(\overrightarrow{r}_{1i})>m-1$.

\end{proof}

\begin{lemma}\label{Lemma-A-2}
 Let $\alpha_{1},\ldots,\alpha_{I}$ be $I\geq2$ integers with $\alpha_{1}\geq \alpha_{2}\geq \ldots\geq \alpha_{I}\geq2$. Let $p,q$ and $m$ be positive integers with $p\leq q, p+q=m+2$ and $(\alpha_{1}-1)+\ldots+(\alpha_{I}-1)=m$.  Suppose  $\overrightarrow{r}=\{a_{1},\ldots,a_{p},-b_{1},\ldots,-b_{q}\}$ is a real residue vector with $0<a_{1}\leq a_{2}\leq \ldots\leq a_{p}$ and $0<b_{1}\leq b_{2}\leq \ldots\leq b_{q}$. Assume that
 ${\rm deg}(\overrightarrow{r})>\max\{\alpha_{1}-1,\ldots,\alpha_{I}-1\}$ and ${\rm deg}(\overrightarrow{r})>\frac{m+2}{2}$. The following assertions hold.
\begin{enumerate}
 \item If $ p<q$, there exist indices $i_{0} \in \{1,2,\ldots,p\}$ and $j_{0}\in\{1, 2\}$ such that for all $i\geq i_{0}$, $a_{i}>b_{j_{0}}$, and the modified residue vector
      $\overrightarrow{r}_{j_{0}i}=\{a_{1},\ldots,(a_{i}-b_{j_{0}}),\ldots, a_{p},-b_{1},-\widehat{b}_{j_{0}},\ldots,-b_{q}\}$
      satisfies
     ${\rm deg}(\overrightarrow{r}_{j_{0}i})>\max\{\alpha_{1}-2,\alpha_{2}-1,\ldots,\alpha_{I}-1\}$, where $\widehat{-b_{j_{0}}}$ denotes the exclusion of the term $-b_{j_{0}}$ from the vector.

\item If $p=q$ and $a_{1}\leq b_{1}$, there exists a index $i_{0} \in \{2,\ldots,p\}$ such that for all $i\geq i_{0}$, $a_{i}>b_{1}$ and the modified residue $\overrightarrow{r}_{1i}=\{a_{1},\ldots,(a_{i}-b_{1}),\ldots, a_{p},-b_{2},\ldots,-b_{q}\}$ satisfies
    ${\rm deg}(\overrightarrow{r}_{1i})>\max\{\alpha_{1}-2,\alpha_{2}-1, \ldots,\\ \alpha_{I}-1\}$.
\end{enumerate}
\end{lemma}

\begin{proof}
When ${\rm deg}(\overrightarrow{r})>m$, according to \textbf{Lemma \ref{Lemma-A-1}}, the assertion holds true. Now, let us consider the case where ${\rm deg} (\overrightarrow{r})\leq m$. \par
Without loss of generality, we may suppose
$$\overrightarrow{r}=\{a_{1},\ldots,a_{p},-b_{1},\ldots,-b_{q}\}$$
is primitive. \par

Since $\alpha_{1}\geq \alpha_{2}\geq \ldots\geq \alpha_{I}\geq2$, it follows that $\max\{\alpha_{1}-1,\ldots,\alpha_{I}-1\}=\alpha_{1}-1$ and ${\rm deg}(\overrightarrow{r})>\alpha_{1}-1$.

Since $p\leq q$ and $p+q=m+2$, we obtain $q\geq\frac{m+2}{2}$. Additionally, we have $b_{1}=b_{2}=1$. Otherwise, if $b_{2}\geq2$, then
${\rm deg}(\overrightarrow{r})=\sum\limits_{j=1}^{q}b_{j}>\sum\limits_{j=2}^{q}b_{j}\geq2(q-1)\geq p+q-2=m$, which contradicts our assumption.\par

If $p<q$, since $\sum\limits_{i=1}^{p}a_{i}=\sum\limits_{j=1}^{q}b_{j}$, it is obvious that there exists an index $i_{0}\in\{1,\ldots,p\}$ such that $a_{i_{0}}>1$. \par

If $p=q$, since $p+q=m+2$, $p=q=\frac{m+2}{2}$. Since ${\rm deg}(\overrightarrow{r})>\frac{m+2}{2}$, there exists an index $i_{0}\in\{2,\ldots,p\}$ such that $a_{i_{0}}>1$. Otherwise, $a_{1}=\ldots=a_{p}=1$. Moreover, ${\rm deg}(\overrightarrow{r})=\frac{m+2}{2}$. It is a contradiction! \par

Define a  residue vector defined by
$$\overrightarrow{r}_{1i}=\{a_{1},\ldots, a_{i-1},(a_{i}-1),a_{i+1},\ldots, a_{p},-b_{2},\ldots,-b_{q}\},~i\geq i_{0}.$$
Since $b_{2}=1$, it follows that $ \overrightarrow{r}_{1i}$ is primitive.\par

When $\alpha_{1}>\alpha_{2}\geq\alpha_{3}\geq\ldots\geq\alpha_{I}$, it follows that
 ${\rm deg}(\overrightarrow{r}_{1i})=\sum\limits_{j=2}^{q}b_{j}={\rm deg} (\overrightarrow{r})-1>\alpha_{1}-2=\max\{\alpha_{1}-2,\alpha_{2}-1,\ldots,\alpha_{I}-1\}$.\par

When $\alpha_{1}=\ldots=\alpha_{I_{1}}>\alpha_{I_{1}+1}\geq\ldots\geq\alpha_{I}$ for some $I_{1}\geq2$,  since $(\alpha_{1}-1)+\ldots+(\alpha_{I}-1)=m\geq I_{1}\cdot(\alpha_{1}-1)$, we obtain
$\alpha_{1}-1\leq\frac{m}{I_{1}}\leq\frac{m}{2}$.
Then
${\rm deg} (\overrightarrow{r})>\frac{m+2}{2}=\frac{m}{2}+1\geq\alpha_{1}$, and
$${\rm deg}(\overrightarrow{r}_{1i})= {\rm deg}(\overrightarrow{r})-1>\alpha_{1}-1=\alpha_{2}-1=\max\{\alpha_{1}-2,\alpha_{2}-1,\ldots,\alpha_{I}-1\}.$$

\end{proof}

\subsubsection{\textbf{Case B: $I+J+L\geq4,I\geq1$ and $J+L\geq1$}}~\par

Initially, under the condition $\mathcal{D}$, we demonstrate that the sufficiency of \textbf{Theorem \ref{Main-th-4}} holds when $J=L=\frac{\sum\limits_{i=1}^{I}(\alpha_{i}-1)+2}{2}> \max\{\alpha_{1}-1,\ldots,\alpha_{I}-1\}$ and $\beta_{1}=\ldots=\beta_{J}=\gamma_{1}=\ldots=\gamma_{L}$. In this case, the degree of the residue vector $\overrightarrow{r}$ is identified as the minimum.\par

\textbf{Claim 1}: Under the condition $\mathcal{D}$, when $J=L=\frac{\sum\limits_{i=1}^{I}(\alpha_{i}-1)+2}{2}> \max\{\alpha_{1}-1,\ldots,\alpha_{I}-1\}$ and $\beta_{1}=\ldots=\beta_{J}=\gamma_{1}=\ldots=\gamma_{L}=\beta$, there exists a reducible spherical conical metric on $S^{2}_{\{\alpha_{1},\ldots,\alpha_{I},\underbrace{\beta,\ldots,\beta}_{J+L}\}}$ such that the singularities of angles $2\pi\alpha_{1},\ldots,2\pi\alpha_{I}$ are saddle points of $\Phi$, the singularities of angles $\underbrace{2\pi\beta,\ldots,2\pi\beta}_{J}$ are minimum points of $\Phi$, the singularities of angles $\underbrace{2\pi\beta,\ldots,2\pi\beta}_{L}$ are maximal points of $\Phi$, and all saddle points lie on the same geodesic connected a minimum point and a maximum point of $\Phi$.\par

\begin{proof}
Since $J=L=\frac{\sum\limits_{i=1}^{I}(\alpha_{i}-1)+2}{2}$ and condition $\mathcal{D}$, it follows that $p=q=0$ and $I\geq2$.

According to the following facts, we know that the claim holds true.\par

\textbf{Fact 1}: When $I=2k$ with $k\geq2$ and $\alpha_{1}=\ldots=\alpha_{I}=2$, since (refer to Figure 3)
$$S^{2}_{\{\underbrace{2,\ldots,2}_{2k},\underbrace{\beta,\ldots,\beta}_{2k+2}\}}=S^{2}_{\{\underbrace{2,\ldots,2}_{2k-2},\underbrace{\beta,\ldots,\beta}_{2k}\}}+S^{2}_{\{\beta,\beta\}}=\ldots
=\underbrace{S^{2}_{\{\beta,\beta\}}+\ldots+S^{2}_{\{\beta,\beta\}}}_{k+1},$$
we obtain a reducible spherical conical metric on $S^{2}_{\{\underbrace{2,\ldots,2}_{2k},\underbrace{\beta,\ldots,\beta}_{2k}\}}$ which satisfies the claim. \par

\textbf{Fact 2}: Under the condition $\mathcal{D}$, when $I=2k+1$ with $k\geq1$ and $\alpha_{1}=3,\alpha_{2}=\ldots=\alpha_{I}=2$, since
$$S^{2}_{\{3,\underbrace{2,\ldots,2}_{2k},\underbrace{\beta,\ldots,\beta}_{2k+4}\}}
=S^{2}_{\{\underbrace{2,\ldots,2}_{2k},\underbrace{\beta,\ldots,\beta}_{2k+2}\}}+S^{2}_{\{\beta,\beta\}},$$
we obtain a reducible spherical conical metric on $S^{2}_{\{\underbrace{2,\ldots,2}_{2k},\underbrace{\beta,\ldots,\beta}_{2k}\}}$ which satisfies the claim. \par

\textbf{Fact 3}: Under the condition $\mathcal{D}$, when $I=2k$ with $k\geq2$ and $\alpha_{1}=4,\alpha_{2}=\ldots=\alpha_{I}=2$, since
$$S^{2}_{\{4,\underbrace{2,\ldots,2}_{2k-1},\underbrace{\beta,\ldots,\beta}_{2k+4}\}}
=S^{2}_{\{3,\underbrace{2,\ldots,2}_{2k-2},\underbrace{\beta,\ldots,\beta}_{2k+2}\}}+S^{2}_{\{\beta,\beta\}},$$
we obtain a reducible spherical conical metric on $S^{2}_{\{4,\underbrace{2,\ldots,2}_{2k-2},\underbrace{\beta,\ldots,\beta}_{2k}\}}$ which satisfies the claim.\par

Furthermore, facts 2 and 3 are included in the following fact.\par

\textbf{Fact 4}: Under the condition $\mathcal{D}$, when $\alpha_{1}\geq3,\alpha_{2}=\ldots=\alpha_{I}=2$, since
$$S^{2}_{\{\alpha_{1},\underbrace{2,\ldots,2}_{I-1},\beta,\ldots,\beta\}}
=S^{2}_{\{\alpha_{1}-1,\underbrace{2,\ldots,2}_{I-2},\beta,\ldots,\beta\}}+S^{2}_{\{\beta,\beta\}},$$
we obtain a reducible spherical conical metric on $S^{2}_{\{\alpha_{1},\underbrace{2,\ldots,2}_{I-1},\beta,\ldots,\beta\}}$ which satisfies the claim.\par

\textbf{Fact 5}:  Under the condition $\mathcal{D}$, when $\alpha_{1}\geq3$ and $\alpha_{2}\geq3$, since
$$S^{2}_{\{\alpha_{1}, \alpha_{2},\alpha_{3},\ldots,\alpha_{I},\underbrace{\beta,\ldots,\beta}_{2k+2}\}}=
S^{2}_{\{\alpha_{1}-1, \alpha_{2}-1,\alpha_{3},\ldots,\alpha_{I},\underbrace{\beta,\ldots,\beta}_{2k}\}}+S^{2}_{\{\beta,\beta\}},$$
we obtain a reducible spherical conical metric on $S^{2}_{\{\alpha_{1}, \alpha_{2},\alpha_{3},\ldots,\alpha_{I},\underbrace{\beta,\ldots,\beta}_{2k+2}\}}$ which satisfies the claim.\par

\end{proof}

\textbf{Claim 2}: Under the condition $\mathcal{D}$, when $I=1$, there is a reducible spherical conical metric on $S^{2}_{\{\alpha_{1},\beta_{1},\ldots,\beta_{J},\gamma_{1},\ldots,\gamma_{L}\}}$ such that the singularity of angle $2\pi\alpha_{1}$ is the unique saddle points of $\Phi$, the singularities of angles $2\pi\beta_{1},\ldots,2\pi\beta_{J}$ are minimum points of $\Phi$, and the singularities of angles $2\pi\gamma_{1},\ldots,2\pi\gamma_{L}$ are maximum points of $\Phi$.\par

\begin{proof}

We shall now apply mathematical induction to the integer $\alpha_{1}$ to verify the claim. \par

When $\alpha_{1}=2$, since $J+L\geq3$, we deduce that $J+L=3$  and the following two cases.
\begin{equation*}
\textbf{a-1:}
\begin{cases}
J=2,\\
L=1,\\
p=0,\\
q=0,\\
\gamma_{1}=\beta_{1}+\beta_{2};
\end{cases}
\textbf{a-2:}
\begin{cases}
J=1,\\
L=2,\\
p=0,\\
q=0,\\
\beta_{1}=\gamma_{1}+\gamma_{2}.
\end{cases}
\end{equation*}

When \textbf{Case a-1} arises, since  (refer to \textbf{Example 7.1})
$$S^{2}_{\{2,\beta_{1},\beta_{2},\gamma_{1}\}}=S^{2}_{\{2,\beta_{1},\beta_{2},\beta_{1}+\beta_{2}\}}=S^{2}_{\{\beta_{1},\beta_{1}\}}+S^{2}_{\{\beta_{2},\beta_{2}\}},$$
the claim holds true.\par

When \textbf{Case a-2} arises, we have (refer to \textbf{Example 7.1})
$$S^{2}_{\{2,\beta_{1},\gamma_{1},\gamma_{2}\}}=S^{2}_{\{2,\gamma_{1}+\gamma_{2},\gamma_{1},\gamma_{2}\}}=S^{2}_{\{\gamma_{1},\gamma_{1}\}}+S^{2}_{\{\gamma_{2},\gamma_{2}\}},$$
the claim holds true.\par

Suppose that when $\alpha_{1}=k\geq2$, and there are two nonnegative integers $p$ and $q$ satisfied the condition $\mathcal{D}$,
there exists a reducible spherical conical metric on $S^{2}_{\{k,\beta_{1},\ldots,\beta_{J},\gamma_{1},\ldots,\gamma_{L}\}}$ such that
the singularity of angle $2\pi k$ is the unique saddle points of $\Phi$, the singularities of angles $2\pi\beta_{1},\ldots,2\pi\beta_{J}$ are minimum points of $\Phi$, and the singularities of angles $2\pi\gamma_{1},\ldots,2\pi\gamma_{L}$ are maximum points of $\Phi$. \par

When $\alpha_{1}=k+1$, suppose that $\widetilde{\beta}_{1},\ldots,\widetilde{\beta}_{\widetilde{J}},\widetilde{\gamma}_{1},\ldots,\widetilde{\gamma}_{\widetilde{L}}$ are $\widetilde{J}+\widetilde{L}\geq3$ positive real numbers with $\widetilde{\beta}_{j}\neq1,\forall j$ and $\widetilde{\gamma}_{l}\neq1,\forall l$, and there are two nonnegative integers $\widetilde{p}$ and $\widetilde{q}$ satisfied
\begin{equation*}
\begin{cases}
\widetilde{p}+\widetilde{q}=k+2-(\widetilde{J}+\widetilde{L}),\\
\widetilde{q}-\widetilde{p}=\sum\limits_{j=1}^{\widetilde{J}}\widetilde{\beta}_{j}-\sum\limits_{l=1}^{\widetilde{L}}\widetilde{\gamma}_{l},\\
{\rm deg}(\overrightarrow{\widetilde{r}})>k,
\end{cases}
\end{equation*}
where
$$\overrightarrow{\widetilde{r}}=\{\widetilde{\beta}_{1},\ldots,\widetilde{\beta}_{\widetilde{J}},-\widetilde{\gamma}_{1},\ldots,-\widetilde{\gamma}_{\widetilde{L}},
\underbrace{1,\ldots,1}_{\widetilde{p}},\underbrace{-1,\ldots,-1}_{\widetilde{q}}\}.$$

Without loss of generality, we may suppose that $\widetilde{J}+\widetilde{p}\leq \widetilde{L}+\widetilde{q}$, $\widetilde{\gamma}_{1}\leq\widetilde{\gamma}_{2}\leq\ldots\leq\widetilde{\gamma}_{\widetilde{q}}$ and $\widetilde{\beta}_{1}\leq\widetilde{\beta}_{1}\leq\ldots\leq \widetilde{\beta}_{J}$. Since $I=1$, it follows that the case of \textbf{Claim 1} does not arise.\par

 Next, we will prove there exists a reducible spherical conical metric on $S^{2}_{\{k+1,\widetilde{\beta}_{1},\ldots,\widetilde{\beta}_{\widetilde{J}},\widetilde{\gamma}_{1},\ldots,\widetilde{\gamma}_{\widetilde{L}}\}}$ such that
the singularity of angle $2\pi (k+1)$ is the unique saddle points of $\widetilde{\Phi}$, the singularities of angles $2\pi\widetilde{\beta}_{1},\ldots,2\pi\widetilde{\beta}_{\widetilde{J}}$ are minimum points of $\widetilde{\Phi}$, and the singularities of angles $2\pi\widetilde{\gamma}_{1},\ldots,2\pi\widetilde{\gamma}_{L}$ are maximum points of $\widetilde{\Phi}$ when $1\leq\widetilde{q}$, $\widetilde{\gamma}_{1}<1<\widetilde{\gamma}_{2}\leq\ldots\leq\widetilde{\gamma}_{\widetilde{q}}$ and $1<\widetilde{\beta}_{1}\leq \ldots\leq \widetilde{\beta}_{J}$. For other cases, the proofs are similar.\par

Since the degree of residue vector
$$\overrightarrow{\widetilde{r}}=\{\widetilde{\beta}_{1},\ldots,\widetilde{\beta}_{\widetilde{J}},-\widetilde{\gamma}_{1},\ldots,-\widetilde{\gamma}_{\widetilde{L}},
\underbrace{1,\ldots,1}_{\widetilde{p}},\underbrace{-1,\ldots,-1}_{\widetilde{q}}\}$$
satisfies ${\rm deg}(\overrightarrow{\widetilde{r}})>k$,  according to \textbf{Lemma \ref{Lemma-A-1}}, the modified residue vector defined by
$$\overrightarrow{\widehat{r}}=\{\widetilde{\beta}_{1}-1,\ldots,\widetilde{\beta}_{\widetilde{J}},-\widetilde{\gamma}_{1},
-\widetilde{\gamma}_{2},\ldots,-\widetilde{\gamma}_{\widetilde{L}}, \underbrace{1,\ldots,1}_{\widetilde{p}},\underbrace{-1,\ldots,-1}_{\widetilde{q}-1} \}$$
satisfies ${\rm deg}(\overrightarrow{\widehat{r}})>k-1$.\par
Setting $\widehat{p}=\widetilde{p}$ and $\widehat{q}=\widetilde{q}-1$, by the assumption, it follows that
\begin{equation*}
\begin{cases}
\widehat{p}+\widehat{q}=k+1-(\widetilde{J}+\widetilde{L}),\\
\widehat{q}-\widehat{p}=\sum\limits_{j=1}^{\widetilde{J}}\widetilde{\beta}_{j}-\sum\limits_{l=1}^{\widetilde{L}}\widetilde{\gamma}_{l}-1.
\end{cases}
\end{equation*}
Hence, there exists a reducible spherical conical metric on $S^{2}_{\{k,\widetilde{\beta}_{1}-1,\ldots,\widetilde{\beta}_{\widetilde{J}},\widetilde{\gamma}_{1},\ldots,\widetilde{\gamma}_{\widetilde{L}}\}}$ such that the singularity of angle $2\pi k$ is the unique saddle point of $\widehat{\Phi}$, the singularities of angles $2\pi(\widetilde{\beta}_{1}-1),\ldots,2\pi\widetilde{\beta}_{\widetilde{J}}$ are minimum points of $\widehat{\Phi}$, and $2\pi\widetilde{\gamma}_{1},\ldots,2\pi\widetilde{\gamma}_{\widetilde{L}}$ are maximum points of $\widehat{\Phi}$. \par

Since (similar as \textbf{Example 7.2})
$$ S^{2}_{\{k+1,\widetilde{\beta}_{1},\ldots,\widetilde{\beta}_{\widetilde{J}},\widetilde{\gamma}_{1},\ldots,\widetilde{\gamma}_{\widetilde{L}}\}}= S^{2}_{\{k,\widetilde{\beta}_{1}-1,\ldots,\widetilde{\beta}_{\widetilde{J}},\widetilde{\gamma}_{1},\ldots,\widetilde{\gamma}_{\widetilde{L}}\}}+S^{2}_{\{1,1\}},$$
we obtain a reducible spherical conical metric on $S^{2}_{\{k+1,\widetilde{\beta}_{1},\ldots,\widetilde{\beta}_{\widetilde{J}},\widetilde{\gamma}_{1},\ldots,\widetilde{\gamma}_{\widetilde{L}}\}}$ such that the singularity of angle $2\pi (k+1)$ is the unique saddle point of $\widetilde{\Phi}$, the singularities of angles $2\pi\widetilde{\beta}_{1},\ldots,2\pi\widetilde{\beta}_{\widetilde{J}}$ are minimum points of $\widetilde{\Phi}$, and $2\pi\widetilde{\gamma}_{1},\ldots,2\pi\widetilde{\gamma}_{\widetilde{L}}$ are maximum points of $\widetilde{\Phi}$. \par

\end{proof}

\textbf{Claim 3}: Under the condition $\mathcal{D}$, when $I\geq2$, there is a reducible spherical conical metric on $S^{2}_{\{\alpha_{1},\ldots,\alpha_{I},\beta_{1},\ldots,\beta_{J},\gamma_{1},\ldots,\gamma_{L}\}}$ such that the singularities of angles $2\pi\alpha_{1},\ldots,2\pi\alpha_{I}$ are saddle points of $\Phi$, the singularities of angles $2\pi\beta_{1},\ldots,2\pi\beta_{J}$ are minimal points of $\Phi$, the singularities of angles $2\pi\gamma_{1},\ldots,2\pi\gamma_{L}$ are maximal points of $\Phi$, and all saddle points lie on the same geodesic connected a minimum to a maximum point of $\Phi$.\par

\begin{proof}
First, we have the following facts.\par

\textbf{Fact 1}: According to \textbf{Claim 2}, there exists a reducible spherical conical metric on $S^{2}_{\{2,\beta_{1},\ldots,\beta_{J},\gamma_{1},\ldots,\gamma_{L}\}}$ such that the singularity of angle $4\pi$ is the unique saddle points of $\Phi$, the singularities of angles $2\pi\beta_{1},\ldots,2\pi\beta_{J}$ are minimum points of $\Phi$, and the singularities of angles $2\pi\gamma_{1},\ldots,2\pi\gamma_{L}$ are maximum points of $\Phi$.\par

\textbf{Fact 2}: Using mathematical induction to the integer $I$, we can prove that there exists a reducible spherical conical metric on $S^{2}_{\{\underbrace{2,\ldots,2}_{I\geq1},\beta_{1},\ldots,\beta_{J},\gamma_{1},\ldots,\gamma_{L}\}}$ such that the singularities of angles $\underbrace{4\pi,\ldots,4\pi}_{I}$ are saddle points of $\Phi$, the singularities of angles $2\pi\beta_{1},\ldots,2\pi\beta_{J}$ are minimum points of $\Phi$, the singularities of angles $2\pi\gamma_{1},\ldots,2\pi\gamma_{L}$ are maximum points of $\Phi$, and all saddle points lie on the same geodesic connected a minimum to a maximum point of $\Phi$.

Based on these facts, we will apply mathematical induction to  the sum $\sum\limits_{i=1}^{I}(\alpha_{i}-1)$ (fixing $I$) to demonstrate our claim.\par

Under the condition $\mathcal{D}$, suppose that when $\sum\limits_{i=1}^{I}(\alpha_{i}-1)=k\geq I$,
there exists a reducible spherical conical metric on $S^{2}_{\{\alpha_{1},\ldots,\alpha_{I},\beta_{1},\ldots,\beta_{J},\gamma_{1},\ldots,\gamma_{L}\}}$ such that
the singularities of angles $2\pi\alpha_{1},\ldots, 2\pi\alpha_{I}$ are saddle points of $\Phi$, the singularities of angles $2\pi\beta_{1},\ldots,2\pi\beta_{J}$ are minimal points of $\Phi$,  the singularities of angles $2\pi\gamma_{1},\ldots,2\pi\gamma_{L}$ are maximum points of $\Phi$, and all saddle points lie on the same geodesic connected a minimum point and a maximum point of $\Phi$. \par

When $\sum\limits_{i=1}^{I}(\widehat{\alpha}_{i}-1)=k+1 \geq I+1$, suppose that $\widehat{\beta}_{1},\ldots,\widehat{\beta}_{\widehat{J}},\widehat{\gamma}_{1},\ldots,\widehat{\gamma}_{L}$ are $\widehat{J}+\widehat{L}\geq1$ positive real numbers with $\widehat{\beta}_{j}\neq1,\forall j$ and $\widehat{\gamma}_{l}\neq1,\forall l$, and suppose that there exist two nonnegative integers $\widehat{p}$ and $\widehat{q}$ satisfied
\begin{equation*}
\begin{cases}
\widehat{p}+\widehat{q}=\sum\limits_{i=1}^{I}(\widehat{\alpha}_{i}-1)-(\widehat{J}+\widehat{L})+2,\\
\widehat{q}-\widehat{p}=\sum\limits_{j=1}^{\widehat{J}}\widehat{\beta}_{j}-\sum\limits_{l=1}^{\widehat{L}}\widehat{\gamma}_{l},\\
{\rm deg}(\overrightarrow{\widehat{r}})>\max\{\widehat{\alpha}_{i}-1\}_{1\leq i\leq I},
\end{cases}
\end{equation*}
where
$$\overrightarrow{\widehat{r}}=\{\widehat{\beta}_{1},\ldots,\widehat{\beta}_{\widehat{J}},-\widehat{\gamma}_{1},\ldots,-\widehat{\gamma}_{\widehat{L}},
\underbrace{1,\ldots,1}_{\widehat{p}},\underbrace{-1,\ldots,-1}_{\widehat{q}}\}.$$

Without loss of generality, we may suppose $\widehat{\alpha}_{1}\geq\widehat{\alpha}_{2}\geq\ldots\widehat{\alpha}_{I}\geq2, \alpha_{1}\geq 3, \widehat{J}+\widehat{p}\leq \widehat{L}+\widehat{q},\widehat{\gamma}_{1}\leq\widehat{\gamma}_{2}\leq\ldots\leq\widehat{\gamma}_{\widehat{q}}$ and $\widehat{\beta}_{1}\leq\widehat{\beta}_{1}\leq\ldots\leq \widehat{\beta}_{J}$. For other cases, the proofs are similar. \par

By \textbf{Lemma \ref{Lemma-A-2}},  the modified residue vector defined by
$$\overrightarrow{\widehat{r}}=\{ \widehat{\beta}_{1}-1,\ldots,\widehat{\beta}_{\widehat{J}},
-\widehat{\gamma}_{1},\ldots,-\widehat{\gamma}_{\widehat{L}},\underbrace{1,\ldots,1}_{\widehat{p}},\underbrace{-1,\ldots,-1}_{\widehat{q}-1}\}$$
satisfies ${\rm deg}(\overrightarrow{\widehat{r}})>\max\{\widehat{\alpha}_{1}-2,\ldots, \widehat{\alpha}_{I}-1\}$.\par

According to the assumption, there exists a reducible spherical conical metric on
 $$S^{2}_{\{\widehat{\alpha}_{1}-1,\widehat{\alpha}_{2}, \widehat{\alpha}_{3} ,\ldots,\widehat{\alpha}_{I+1},\widehat{\beta}_{1}-1,\ldots,\widehat{\beta}_{\widehat{J}},\widehat{\gamma}_{1},\ldots,\widehat{\gamma}_{\widehat{L}}\}}$$
such that the singularities of angles $2\pi(\widehat{\alpha}_{1}-1), 2\pi\widehat{\alpha}_{2}, 2\pi\widehat{\alpha}_{3}, \ldots, 2\pi\widehat{\alpha}_{I}$ are saddle points of $\widetilde{\Phi}$, the singularities of angles $2\pi(\widehat{\beta}_{1}-1),\ldots,2\pi\widehat{\beta}_{\widehat{J}}$ are minimum points of $\widetilde{\Phi}$, $2\pi\widehat{\gamma}_{1},\ldots,2\pi\widehat{\gamma}_{\widehat{L}}$ are maximum points of $\widetilde{\Phi}$, and all saddle points lie on the same geodesic connected a minimum point and a maximum point of $\widetilde{\Phi}$.\par

\begin{figure}[htbp]
\centering
\includegraphics[width=11cm]{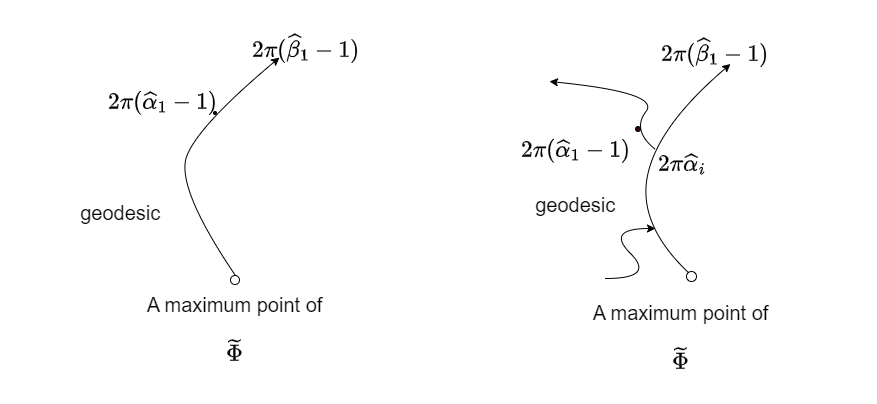}
\caption{The geodesic of all saddle points lies on.}
\end{figure}

 When there is a geodesic from the singularity of angle $2\pi(\widehat{\beta}_{1}-1)$ to a maximum point of $\widetilde{\Phi}$ directly such that the singularity of angle $2\pi(\widehat{\alpha}_{1}-1)$ also lies on this geodesic (see Figure 13), then on $S^{2}_{\{\widehat{\alpha}_{1}-1,\widehat{\alpha}_{2} ,\ldots,\widehat{\alpha}_{I},\widehat{\beta}_{1}-1,\ldots,\widehat{\beta}_{J},\widehat{\gamma}_{1},\ldots,\widehat{\gamma}_{L}\}}$, by gluing a standard football $S^{2}_{\{1,1\}}$, we can construct a reducible spherical conical metric on $S^{2}_{\{\widehat{\alpha}_{1},\ldots,\widehat{\alpha}_{I},\widehat{\beta}_{1},\ldots,\widehat{\beta}_{\widehat{J}},\widehat{\gamma}_{1},\ldots,\widehat{\gamma}_{\widehat{L}}\}}$ such that the singularities of angles $2\pi\widehat{\alpha}_{1},\ldots, 2\pi\widehat{\alpha}_{I}$ are saddle points of $\widehat{\Phi}$, the singularities of angles $2\pi\widehat{\beta}_{1},\ldots,2\pi\widehat{\beta}_{\widehat{J}}$ are minimum points of $\widehat{\Phi}$, the singularities of angles $2\pi\widehat{\gamma}_{1},\ldots,2\pi\widehat{\gamma}_{\widehat{L}}$ are maximum points of $\widehat{\Phi}$ and all saddle points lie on the same geodesic connected a minimum point and a maximum point of $\widehat{\Phi}$.\par

 When there is no geodesic from the singularity of angle $2\pi(\widehat{\beta}_{1}-1)$ to a maximum point of $\widetilde{\Phi}$ directly such that the singularity of angle $2\pi(\widehat{\alpha}_{1}-1)$ also lies on this geodesic, since all saddle points lie on the same geodesic connected a minimum point and a maximum point of $\widetilde{\Phi}$, we can adjust the position of the singularity of $2\pi(\widehat{\alpha}_{1}-1)$ relative to some saddle points (see Figure 13). This adjustment ensures the existence of a geodesic from the singularity of angle $2\pi(\widehat{\alpha}_{1}-1)$ a maximum point of $\widetilde{\Phi}$ directly such that the singularity of angle $2\pi(\widehat{\alpha}_{1}-1)$ also lie on this geodesic. Then, by gluing a standard football $S^{2}_{\{1,1\}}$ to $S^{2}_{\{\widehat{\alpha}_{1}-1,\widehat{\alpha}_{2},\ldots,\widehat{\alpha}_{I},\widehat{\beta}_{1}-1,\ldots,\widehat{\beta}_{J},\widehat{\gamma}_{1},\ldots,\widehat{\gamma}_{L}\}}$, we can construct a reducible spherical conical metric on $S^{2}_{\{\widehat{\alpha}_{1},\ldots,\widehat{\alpha}_{I},\widehat{\beta}_{1},\ldots,\widehat{\beta}_{\widehat{J}},\widehat{\gamma}_{1},\ldots,\widehat{\gamma}_{\widehat{L}}\}}$ such that the singularities of angles $2\pi\widehat{\alpha}_{1},\ldots, 2\pi\widehat{\alpha}_{I}$ are saddle points of $\widehat{\Phi}$, the singularities of angles $2\pi\widehat{\beta}_{1},\ldots,2\pi\widehat{\beta}_{\widehat{J}}$ are minimum points of $\widehat{\Phi}$, the singularities of angles $2\pi\widehat{\gamma}_{1},\ldots,2\pi\widehat{\gamma}_{\widehat{L}}$ are maximum points of $\widehat{\Phi}$ and all saddle points lie on the same geodesic connected a minimum point and a maximum point of $\widehat{\Phi}$.\par

\end{proof}

\section{Proof of \textbf{Theorem \ref{Main-thm-5}}}

The necessity of the argument is similar as the proof of \textbf{Theorem \ref{Main-th-4}}, and can be easily obtain, so we leave it to the interesting readers. We will now establish its sufficiency.\par

In this section, we will also utilize the following identity to illustrate the construction of reducible spherical conical metrics:
$$\mathcal{M}_{\{\alpha_{1},\ldots,a_{N_{1}}\}}=\widetilde{\mathcal{M}}_{\{\beta_{1},\ldots,\beta_{N_{2}}\}}+S^{2}_{\{\gamma_{1},\gamma_{1}\}}$$
This identity shows that by attaching a football $S^{2}_{\{\gamma_{1},\gamma_{1}\}}$ to $\widetilde{\mathcal{M}}_{\{\beta_{1},\ldots,\beta_{N_{2}}\}}$, which possesses a reducible spherical conical metric, we can construct a reducible spherical conical metric on $\mathcal{M}_{\{\alpha_{1},\ldots,a_{N_{1}}\}}$.\par

\textbf{Claim 1 :} Let $g\geq1$ be an integer. Let $\beta\neq1$ be a positive real number and  $2\leq\alpha_{1},\ldots,\alpha_{I}$ be $I\geq1$ integers.
If $\sum\limits_{i=1}^{I}(\alpha_{i}-1)=2g$,
then there exist a  compact Riemann surface $\mathcal{M}^{g}$  with genus $g$ and a reducible spherical conical metric ${\rm d}s^{2}$ on $\mathcal{M}^{g}_{\{\alpha_{1},\ldots,\alpha_{I},\beta,\beta\}}$
such that the function $\Phi$ defined by equation (\ref{E-1}) satisfies that
\begin{enumerate}
\item the singularities of angles $2\pi\alpha_{1},\ldots,2\pi\alpha_{I}$ are saddle points of $\Phi$,  and they lie on the same geodesic connected a minimum point and a maximum point of $\Phi$;
\item  one singularity of angle $2\pi\beta$ is a minimum points of $\Phi$,
and another singularity of angle $2\pi\beta$ is a maximum point of $\Phi$.
\end{enumerate}

\begin{proof}
\begin{figure}[htbp]
\centering
\includegraphics[width=11cm]{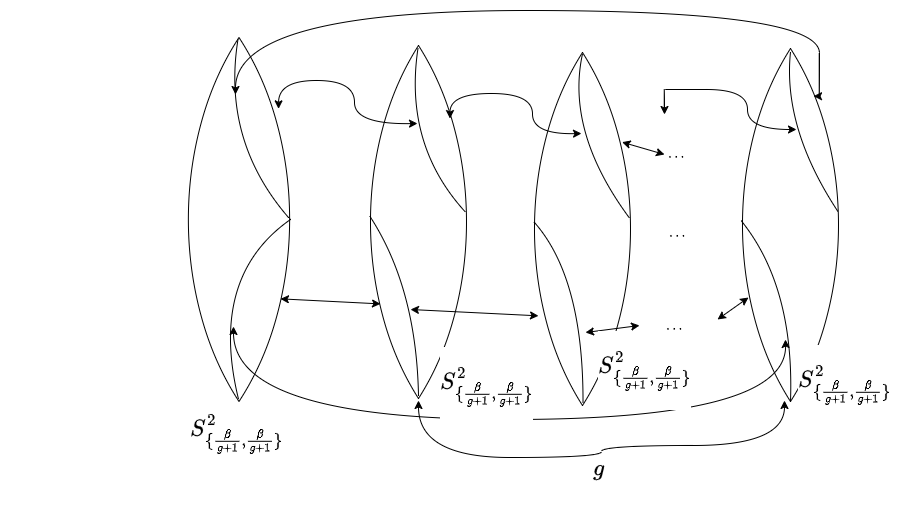}
\caption{$\mathcal{M}^{g}_{\{2g+1,\beta,\beta\}}.$}
\end{figure}
When $I=1$, it follows that $\alpha_{1}=2g+1$.  Since (as show in Figure 14)
$$\mathcal{M}_{\{2g+1,\beta,\beta\}}=S^{2}_{\{\frac{\beta}{g+1},\frac{\beta}{g+1}\}}
+\underbrace{S^{2}_{\{\frac{\beta}{g+1},\frac{\beta}{g+1}\}}+\ldots+S^{2}_{\{\frac{\beta}{g+1},\frac{\beta}{g+1}\}}}_{g},$$
the claim holds true.\par

When $I\geq2$, it follows that $\sum\limits_{i=1}^{I}(\alpha_{i}-1)\geq2$. Without loss of generality, we may suppose $\alpha_{1}\geq\ldots\geq\alpha_{I}$.\par

If $g=2$, by $\sum\limits_{i=1}^{I}(\alpha_{i}-1)=2g$, we obtain $I=2$ and $\alpha_{1}=\alpha_{2}=3$ or $\alpha_{1}=4,\alpha_{2}=2$, $I=3$ and $\alpha_{1}=3,\alpha_{2}=\alpha_{3}=2$, or $I=4$ and $\alpha_{1}=\alpha_{2}=\alpha_{3}=\alpha_{4}=2$. \par
\begin{figure}[htbp]
\centering
\includegraphics[width=13cm]{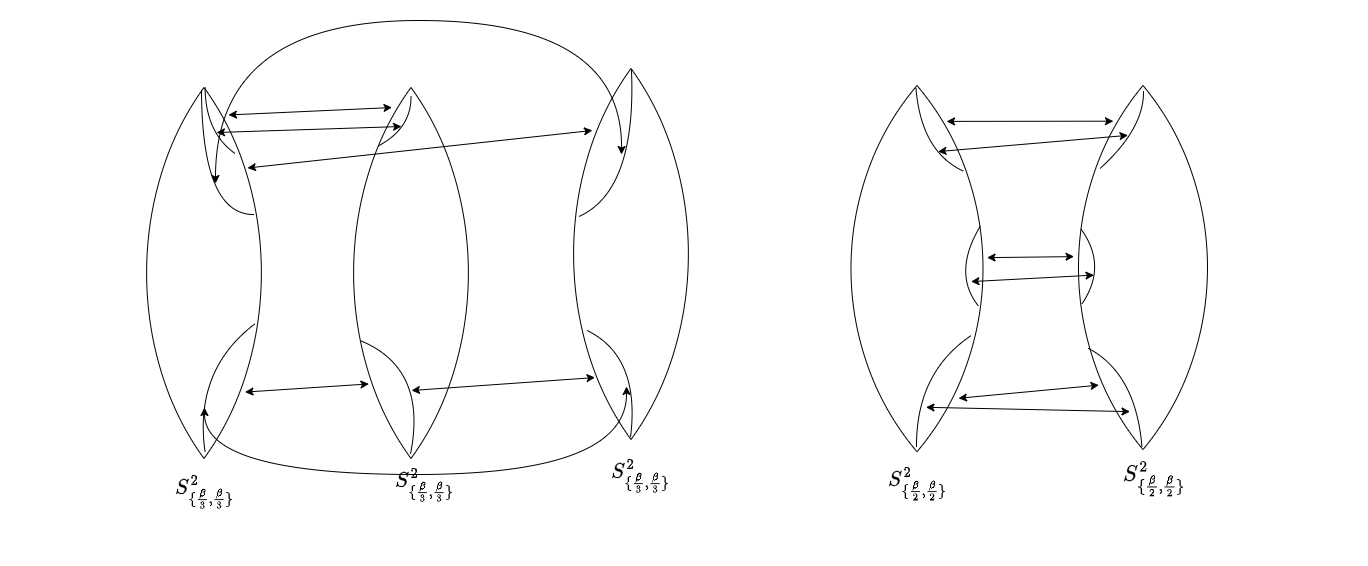}
\caption{$\mathcal{M}^{2}_{\{3,2,2,\beta,\beta\}}~(left)~and~\mathcal{M}^{2}_{\{2,2,2,2,\beta,\beta\}}~(right).$}
\end{figure}
When $I=2$ and $\alpha_{1}=\alpha_{2}=3$ or $\alpha_{1}=4,\alpha_{2}=2$, since (as show in Figure 15 )
$$\mathcal{M}^{2}_{\{3,3,\beta,\beta\}}=S^{2}_{\{\frac{\beta}{3},\frac{\beta}{3}\}}+S^{2}_{\{\frac{\beta}{3},\frac{\beta}{3}\}}+S^{2}_{\{\frac{\beta}{3},\frac{\beta}{3}\}},$$
$$\mathcal{M}^{2}_{\{4,2,\beta,\beta\}}=T^{1}_{\{3,\frac{2\beta}{3},\frac{2\beta}{3}\}}+S^{2}_{\{\frac{\beta}{3},\frac{\beta}{3}\}}=S^{2}_{\{\frac{\beta}{3},\frac{\beta}{3}\}}+S^{2}_{\{\frac{\beta}{3},\frac{\beta}{3}\}}+S^{2}_{\{\frac{\beta}{3},\frac{\beta}{3}\}},$$
the claim holds true.
\begin{figure}[htbp]
\centering
\includegraphics[width=13cm]{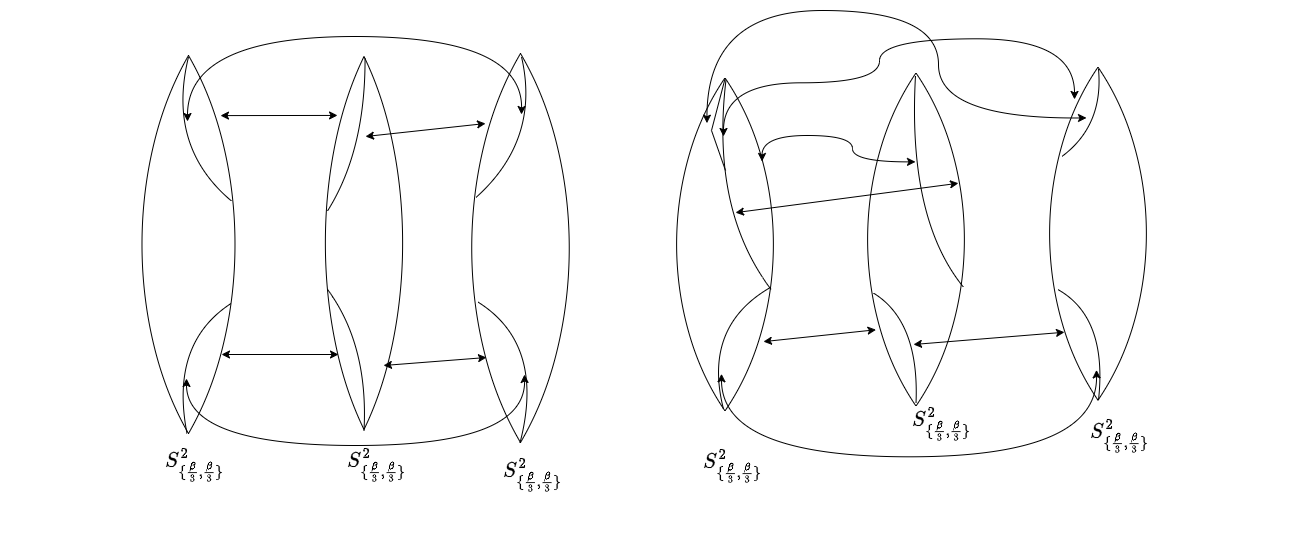}
\caption{$\mathcal{M}^{2}_{\{3,3,\beta,\beta\}}~(left)~and~\mathcal{M}^{2}_{\{4,2,\beta,\beta\}}~(right).$}
\end{figure}

For other two cases, the proofs are similar (as show in Figure 16 ).\par

Suppose that when $g=k\geq2$ and $\sum\limits_{i=1}^{I}(\alpha_{i}-1)=2g$, there exist a compact Riemann surface $\mathcal{M}^{k}$  with genus $k$ and a reducible spherical conical metric ${\rm d}s^{2}$ on $\mathcal{M}^{k}_{\{\alpha_{1},\ldots,\alpha_{I},\beta,\beta\}}$
such that the function $\Phi$ defined by equation (\ref{E-1}) satisfies that
\begin{enumerate}
\item the singularities of angles $2\pi\alpha_{1},\ldots,2\pi\alpha_{I}$ are saddle points of $\Phi$ and they lie on the same geodesic connected a minimum point and a maximum point of $\Phi$;
\item  one singularity of angle $2\pi\beta$ is a minimum point of $\Phi$,
and another singularity of angle $2\pi\beta$ is a maximum point of $\Phi$.
\end{enumerate}

When $g=k+1$ and $ \sum\limits_{i=1}^{\widetilde{I}}(\widetilde{\alpha}_{i}-1)=2(k+1)$, it follows that
 $\sum\limits_{i=1}^{\widetilde{I}}(\widetilde{\alpha}_{i}-1)-2=2k$. Suppose that $\widetilde{\alpha}_{1}\geq\widetilde{\alpha}_{2}\geq\ldots\geq\widetilde{\alpha}_{\widetilde{I}}$, if $\widetilde{\alpha}_{1}=2$, then $\widetilde{I}=2(k+1)$ and $\widetilde{\alpha}_{2}=\ldots=\widetilde{\alpha}_{\widetilde{I}}=2$. Obviously (as show in Figure 17),
$$ \mathcal{M}^{k+1}_{\{\underbrace{2,\ldots,2}_{2(k+1)},\widetilde{\beta},\widetilde{\beta}\}}=S^{2}_{\{\frac{\widetilde{\beta}}{2}, \frac{\widetilde{\beta}}{2}\}}+S^{2}_{\{\frac{\widetilde{\beta}}{2},\frac{\widetilde{\beta}}{2}\}},$$
the claim holds true.
\begin{figure}[htbp]
\centering
\includegraphics[width=13cm]{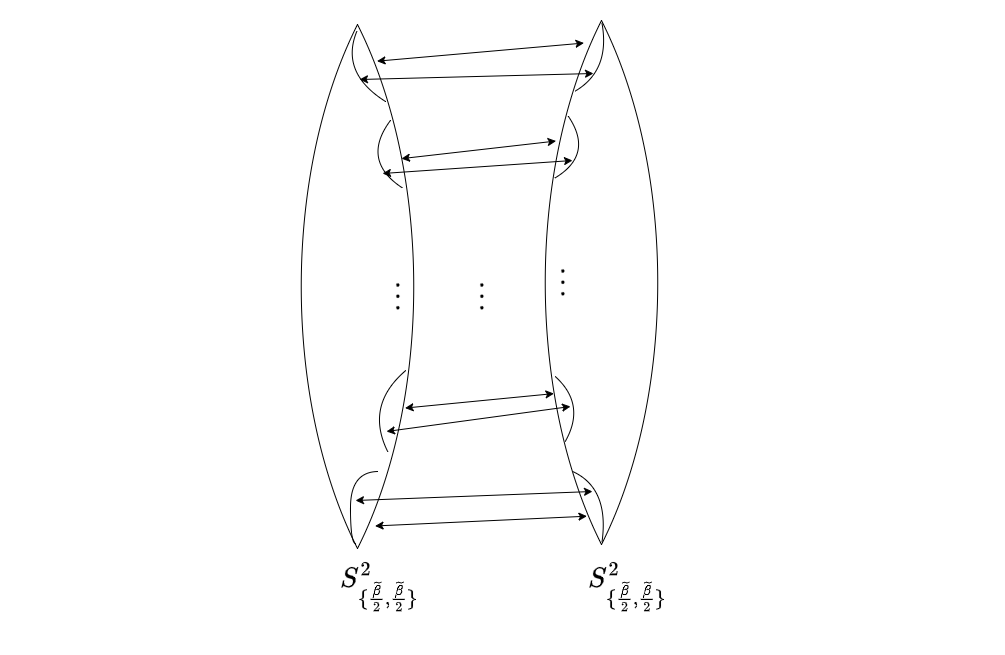}
\caption{$\mathcal{M}^{k+1}_{\{2,\ldots,2,\widetilde{\beta},\widetilde{\beta}\}}.$}
\end{figure}

If $\widetilde{\alpha}_{1}=3, \widetilde{\alpha}_{2}=2$, then $\widetilde{I}=2k-1$ and $\widetilde{\alpha}_{3}=\ldots=\widetilde{\alpha}_{\widetilde{I}}=2$.  Since
$$ \widetilde{\mathcal{M}}^{k+1}_{\{3,\underbrace{2,\ldots,2}_{2k},\widetilde{\beta},\widetilde{\beta}\}}=\mathcal{M}^{k}_{\{\underbrace{2,\ldots,2}_{2k},\frac{\widetilde{\beta}}{2}, \frac{\widetilde{\beta}}{2}\}}+S^{2}_{\{\frac{\widetilde{\beta}}{2},\frac{\widetilde{\beta}}{2}\}},$$
the claim holds true.\par

If $\widetilde{\alpha}_{1},\widetilde{\alpha}_{2}\geq 3$, by the assumption, there exist a compact Riemann surface $\widehat{\mathcal{M}}^{k}$ with genus $k$ and a  reducible spherical conical metric $\widehat{{\rm d}s^{2}}$ on $$\widehat{\mathcal{M}}^{k}_{\{\widetilde{\alpha}_{1}-1,\widetilde{\alpha}_{2}-1,\widetilde{\alpha}_{3},\ldots,\widetilde{\alpha}_{\widetilde{I}},\frac{\widetilde{\beta}}{2},\frac{\widetilde{\beta}}{2}\}}$$
such that the function $\widehat{\Phi}$ defined by equation (\ref{E-1}) satisfies that

\begin{enumerate}
\item the singularities of angles $2\pi(\widetilde{\alpha}_{1}-1),2\pi(\widetilde{\alpha}_{2}-1),2\pi\widetilde{\alpha}_{3},\ldots,2\pi\widetilde{\alpha}_{\widetilde{I}}$ are saddle points of $\widehat{\Phi}$, and they lie on the same geodesic connected a minimum point and a maximum point of $\widehat{\Phi}$;
\item a singularity of angle $2\pi\widetilde{\beta}$ is a minimum point of $\widehat{\Phi}$,
and another singularity of angles $2\pi\widetilde{\beta}$ is a maximum point of $\widehat{\Phi}$.
\end{enumerate}

Since
\begin{equation*}
\widetilde{\mathcal{M}}^{k+1}_{\{\widetilde{\alpha}_{1},\widetilde{\alpha}_{2},\ldots,\widetilde{\alpha}_{\widetilde{I}},\widetilde{\beta},\widetilde{\beta}\}}=
 \widehat{\mathcal{M}}^{k}_{\{\widetilde{\alpha}_{1}-1,\widetilde{\alpha}_{2}-1,\widetilde{\alpha}_{3},\ldots,\widetilde{\alpha}_{\widetilde{I}},\frac{\widetilde{\beta}}{2},\frac{\widetilde{\beta}}{2}\}}
 +S^{2}_{\{\frac{\widetilde{\beta}}{2},\frac{\widetilde{\beta}}{2}\}},
\end{equation*}
the claim holds true.
\end{proof}

\textbf{Claim 2 :}  Let $g\geq1$ be an integer. Let $\beta\neq1$ be a positive real numbers and  $2\leq\alpha_{1},\ldots,\alpha_{I}$ be $I\geq1$ integers.
If there exists an integer $J\geq1$ such that
$2J=\sum\limits_{i=1}^{I}(\alpha_{i}-1)-2g+2$,
then there exist a  compact Riemann surface $\mathcal{M}^{g}$  with genus $g$ and a reducible spherical conical metric ${\rm d}s^{2}$ on $\mathcal{M}^{g}_{\{\alpha_{1},\ldots,\alpha_{I},\underbrace{\beta,\ldots,\beta}_{2J}\}}$
such that the function $\Phi$ defined by equation (\ref{E-1}) satisfies that

\begin{enumerate}
\item the singularities of angles $2\pi\alpha_{1},\ldots,2\pi\alpha_{I}$ are saddle points of $\Phi$ and  they lie on the same geodesic which connects a maximum point and a minimum point of $\Phi$;
\item $J$ singularities of angles $2\pi\beta$ are minimum points of $\Phi$,
and other $J$ singularities of angles $2\pi\beta$ are maximum points of $\Phi$.
\end{enumerate}

\begin{proof}
When $J=1$, by \textbf{Claim 1},  the claim holds true.\par

Suppose that when $J=k\geq1$ and $ 2k=\sum\limits_{i=1}^{I}(\alpha_{i}-1)-2g+2$, there exist a compact Riemann surface $\mathcal{M}^{g}$  with genus $g$ and a reducible spherical conical metric ${\rm d}s^{2}$ on $$\mathcal{M}^{g}_{\{\alpha_{1},\ldots,\alpha_{I},\underbrace{\beta,\ldots,\beta}_{2k}\}}$$
such that the function $\Phi$ defined by equation (\ref{E-1}) satisfies that

\begin{enumerate}
\item the singularities of angles $2\pi\alpha_{1},\ldots,2\pi\alpha_{I}$ are saddle points of $\Phi$ and  they are lie on the same geodesic which connects a maximum point and a minimum point of $\Phi$;
\item $k$ singularities of angles $2\pi\beta$ are minimum points of $\Phi$, and
other $k$ singularities of angles $2\pi\beta$ are maximum points of $\Phi$.
\end{enumerate}

When $J=k+1$ and $2(k+1)=\sum\limits_{i=1}^{\widetilde{I}}(\widetilde{\alpha}_{i}-1)-2g+2$, it follows that $2k=[\sum\limits_{i=1}^{\widetilde{I}}(\widetilde{\alpha}_{i}-1)-2]-2g+2$. By our assumption, there exist a compact Riemann surface $\widehat{M}^{g}$  with genus $g$ and a  reducible spherical conical metric $\widehat{{\rm d}s^{2}}$ on $$\widehat{\mathcal{M}}^{g}_{\{\widetilde{\alpha}_{1}-1,\widetilde{\alpha}_{2}-1,\widetilde{\alpha}_{3},\ldots,\widetilde{\alpha}_{\widetilde{I}},\underbrace{\widetilde{\beta},\ldots,\widetilde{\beta}}_{2k}\}}$$
such that the function $\widehat{\Phi}$ defined by equation (\ref{E-1}) satisfies that

\begin{enumerate}
\item  the singularities of angles $2\pi(\widetilde{\alpha}_{1}-1),2\pi(\widetilde{\alpha}_{2}-1),2\pi\widetilde{\alpha}_{1},\ldots,2\pi\widetilde{\alpha}_{\widetilde{I}}$ are saddle points of $\widehat{\Phi}$ and they are in the same geodesic which connects a maximum point and a minimum point of $\widehat{\Phi}$;
\item $k$ singularities of angles $2\pi\widetilde{\beta}$ are minimum points of $\widehat{\Phi}$,
and other $k$ singularities of angles $2\pi\widetilde{\beta}$ are maximum points of $\widehat{\Phi}$.
\end{enumerate}

Since
$$\widetilde{\mathcal{M}}^{g}_{\{\widetilde{\alpha}_{1},\ldots,\widetilde{\alpha}_{\widetilde{I}},\underbrace{\widetilde{\beta},\ldots,\widetilde{\beta}}_{2(k+1)}\}}=
 \widehat{\mathcal{M}}^{g}_{\{\widetilde{\alpha}_{1}-1,\widetilde{\alpha}_{2}-1,\widetilde{\alpha}_{3},\ldots,\widetilde{\alpha}_{\widetilde{I}},\underbrace{\widetilde{\beta},\ldots,\widetilde{\beta}}_{2k}\}}
 +S^{2}_{\{\widetilde{\beta},\widetilde{\beta}\}},$$
the claim holds true.\par

\end{proof}

\begin{remark}
By the proof, we know that if $\beta=1$, \textbf{Claims 1 and 2} hold also true. This means that all singularities are saddle points of $\Phi$.
\end{remark}

\textbf{Claim 3 :} Let $g\geq1$ be an integer. Let $\beta_{1},\ldots,\beta_{J},\gamma_{1},\ldots,\gamma_{L}\neq1$ be $J+L$ positive real numbers and  $2\leq\alpha_{1},\ldots,\alpha_{I}$ be $I\geq1$ integers$~(J,L\geq0)$.
If there exists exist two nonnegative integers $p$ and $q$ such that $p+J\geq1,~q+L\geq1$ and
\begin{equation*}
\begin{cases}
(p+J)+(q+L)=\sum\limits_{i=1}^{I}(\alpha_{i}-1)+2-2g,\\
q-p=\sum\limits_{j=1}^{J}\beta_{j}-\sum\limits_{l=1}^{L}\gamma_{l},
\end{cases}
\end{equation*}
then, there exist a  compact Riemann surface $\mathcal{M}^{g}$  with genus $g$ and a reducible spherical conical metric ${\rm d}s^{2}$ on $$\mathcal{M}^{g}_{\{\alpha_{1},\ldots,\alpha_{I},\beta_{1},\ldots,\beta_{J},\gamma_{1},\ldots,\gamma_{L}\}}$$
such that the function $\Phi$ defined by equation (\ref{E-1}) satisfies that
\begin{enumerate}
\item  the singularities of angles $2\pi\alpha_{1},\ldots,2\pi\alpha_{I}$ are saddle points of $\Phi$;
\item the singularities of angles $2\pi\beta_{1},\ldots,2\pi\beta_{J}$ are  minimum points of $\Phi$,
and the singularities of angles $2\pi\gamma_{1},\ldots$, $2\pi\gamma_{L}$ are maximum points of $\Phi$.
\end{enumerate}

\begin{proof}
When $\sum\limits_{i=1}^{I}(\alpha_{i}-1)-2g=0$, it follows that $p+J+q+L=2$. Obviously, the claim holds true.\par

Suppose that when $\sum\limits_{i=1}^{I}(\alpha_{i}-1)-2g=k\geq0$ and there exist two nonnegative integers $p$ and $q$ such that
$p+J\geq1,q+L\geq1$ and
\begin{equation*}
\begin{cases}
(p+J)+(q+L)=\sum\limits_{i=1}^{I}(\alpha_{i}-1)+2-2g,\\
q-p=\sum\limits_{j=1}^{J}\beta_{j}-\sum\limits_{l=1}^{L}\gamma_{l},
\end{cases}
\end{equation*}
there exist a  compact Riemann surface $\mathcal{M}^{g}$  with genus $g$ and a reducible spherical conical metric ${\rm d}s^{2}$ on
$$\mathcal{M}^{g}_{\{\alpha_{1},\ldots,\alpha_{I},\beta_{1},\ldots,\beta_{J},\gamma_{1},\ldots,\gamma_{L}\}}$$
such that the function $\Phi$ defined by equation (\ref{E-1}) satisfies that
\begin{enumerate}
\item  the singularities of angles $2\pi\alpha_{1},\ldots,2\pi\alpha_{I}$ are saddle points of $\Phi$ and they lie on the same geodesic connected a minimum point and a maximum point of $\Phi$;
\item the singularities of angles $2\pi\beta_{1},\ldots,2\pi\beta_{J}$ are minimum points of $\Phi$,
and the singularities of angles $2\pi\gamma_{1},\ldots $, $2\pi\gamma_{L}$ are maximum points of $\Phi$.
\end{enumerate}

When $\sum\limits_{i=1}^{I}(\widetilde{\alpha}_{i}-1)-2g=k+1$ and
there exist two nonnegative integers $\widetilde{p}$ and $\widetilde{q}$ such that
$\widetilde{p}+\widetilde{J}\geq1,\widetilde{q}+\widetilde{L}\geq1$ and
\begin{equation*}
\begin{cases}
(\widetilde{p}+\widetilde{J})+(\widetilde{q}+\widetilde{L})=\sum\limits_{i=1}^{I}(\widetilde{\alpha}_{i}-1)+2-2g,\\
\widetilde{q}-\widetilde{p}=\sum\limits_{j=1}^{\widetilde{J}}\widetilde{\beta}_{j}-\sum\limits_{l=1}^{\widetilde{L}}\widetilde{\gamma}_{l},
\end{cases}
\end{equation*}
without loss of generality, suppose $\widetilde{\beta}_{1}>\widetilde{\gamma}_{1}$ (if $J=L=\frac{\sum\limits_{i=1}^{I}\widetilde{\alpha}_{i}-1+2-2g}{2}$ and $\widetilde{\beta}_{1}=\ldots=\widetilde{\beta}_{J}=\gamma_{1}=\ldots=\widetilde{\gamma}_{L}$, \textbf{Claim 2} says that the claim holds true), by the assumption,  for any $i$, there exist a compact Riemann surface $\widehat{\mathcal{M}}^{g}$ with genus $g$ and a reducible spherical conical metric $\widehat{{\rm d}s^{2}}$ on $$\widehat{\mathcal{M}}^{g}_{\{\widetilde{\alpha}_{1},\ldots,\widetilde{\alpha}_{i}-1,\ldots,\widetilde{\alpha}_{I},\widetilde{\beta}_{1}-\widetilde{\gamma}_{1},
\widetilde{\beta}_{2},\ldots,\widetilde{\beta}_{J},\widetilde{\gamma}_{2},\ldots,\widetilde{\gamma}_{L}\}}$$
such that the function $\widehat{\Phi}$ defined by equation (\ref{E-1}) satisfies that
\begin{enumerate}
\item  the singularities of angles $2\pi\widetilde{\alpha}_{1},\ldots, 2\pi(\widetilde{\alpha}_{i}-1),\ldots,2\pi\widetilde{\alpha}_{I}$ are saddle points of $\widehat{\Phi}$ and they lie on the same geodesic connected a minimum point and a maximum point of $\widehat{\Phi}$;
\item the singularities of angles $2\pi(\widetilde{\beta}_{1}-\widetilde{\gamma}_{1}),\ldots,2\pi\widetilde{\beta}_{\widetilde{J}}$ are minimum points of $\widehat{\Phi}$, and the singularities of angles $2\pi\widetilde{\gamma}_{2},\ldots,2\pi\widetilde{\gamma}_{\widetilde{L}}$ are maximum points of $\widehat{\Phi}$.
\end{enumerate}

 Since
$$\widetilde{\mathcal{M}}^{g}_{\{\widetilde{\alpha}_{1},\ldots,\widetilde{\alpha}_{I},\widetilde{\beta}_{1},\ldots,\widetilde{\beta}_{\widetilde{J}},\widetilde{\gamma}_{1},\ldots,\widetilde{\gamma}_{L}\}}
=\widehat{\mathcal{M}}^{g}_{\{\widetilde{\alpha}_{1},\ldots,\widetilde{\alpha}_{i}-1,\ldots,\widetilde{\alpha}_{I},\widetilde{\beta}-\widetilde{\gamma}_{1},
\widetilde{\beta}_{2},\ldots,\widetilde{\beta}_{J},\widetilde{\gamma}_{2},\ldots,\widetilde{\gamma}_{L}\}}+S^{2}_{\{\widetilde{\gamma}_{1}, \widetilde{\gamma}_{1}\}},$$ the claim is true.
\end{proof}

By \textbf{Claims 1,2 and 3}, the sufficiency of \textbf{Theorem \ref{Main-thm-5}} is established.

\textbf{Acknowledgments.} This work was partially completed during Wei's visit to the Institute of Geometry and Physics (IGP) in 2022. Wei expresses heartfelt gratitude to Professors Xiuxiong Chen, Qing Chen, Bing Wang, Meijun Zhu, and Xiaowei Xu for their gracious hospitality and valuable support. Wei is partially supported by the National Natural Science Foundation of China (Grant No. 12171140). Wu is partially supported by the Fundamental Research Funds for the Central Universities and the CAS Project for Young Scientists in Basic Research (Grant No. YSBR-001). Xu is supported in part by the National Natural Science Foundation of China (Grant Nos. 12271495, 11971450, and 12071449) and the CAS Project for Young Scientists in Basic Research (Grant No. YSBR-001).

\textbf{Declarations}

\textbf{Data Availability Statement}  This manuscript has no associated data.

\textbf{Competing interests} On behalf of all authors, the corresponding author states that there is no conflict of interest.

\smallskip

\noindent
Zhiqiang Wei\\
School of Mathematics and Statistics, Henan University, Kaifeng 475004 P. R. China\\
Center for Applied Mathematics of Henan Province, Henan University, Zheng zhou 450046 P. R. China\\
Email: weizhiqiang15@mails.ucas.edu.cn. ~or~10100123@vip.henu.edu.cn.\\
Yingyi Wu\\
School of Mathematical Sciences, University of Chinese Academy of Sciences, Beijing 100049, P. R. China\\
Email: wuyy@ucas.ac.cn\\
Bin Xu\\
CAS Wu Wen-Tsun Key Laboratory of Mathematics and School of Mathematical Sciences\\
University of Science and Technology of China, Hefei 230026, P. R. China\\
Email: bxu@ustc.edu.cn.

\end{document}